\pdfoutput=1
\documentclass[11pt]{article}
\usepackage[utf8]{inputenc}
\usepackage{amsmath,amssymb,amsthm}

\usepackage{mathtools}
\usepackage{graphicx}
\usepackage[margin=1in]{geometry}
\usepackage{placeins}
\newtheorem{theorem}{Theorem}
\newtheorem{proposition}{Proposition}
\newtheorem{corollary}{Corollary}
\newtheorem{lemma}{Lemma}
\newtheorem{conjecture}{Conjecture}
\theoremstyle{definition}
\newtheorem{definition}{Definition}
\newtheorem{assumption}{Assumption}
\newtheorem{openproblem}{Open Problem}
\theoremstyle{remark}
\newtheorem{remark}{Remark}

\newcommand{\R}{\mathbb{R}}
\newcommand{\E}{\mathbb{E}}
\newcommand{\PP}{\mathcal{P}_2(\R^d)}
\newcommand{\Ck}{\mathcal{C}_k(\varepsilon)}
\newcommand{\mean}{\mathrm{mean}}
\newcommand{\KL}{\mathrm{KL}}
\newcommand{\TV}{\mathrm{TV}}

\title{\vspace{-2em}A Temporal--Spatial Minimax Rate for Smoothly-Varying Distributions in Wasserstein Space}
\author{Munsik Kim\\ \texttt{physicist456@gmail.com}}
\date{June 2026}

\begin{document}
\maketitle
\vspace{-1.5em}

\begin{abstract}\noindent
We study the minimax rate of estimating a future value $\mu_{t_n+h}$ of a curve $t\mapsto\mu_t$ in
the $2$-Wasserstein space $\PP$ from finitely many noisy snapshots of its past, under an adiabatic
bound $\|\nabla_t^k v\|\le\varepsilon$ on the $k$-th covariant derivative of the velocity field. Our
central result is a \emph{unified temporal--spatial minimax lower bound}: over regular, locally
transport-rich subclasses of $\PP$, every estimator incurs $W_2$-risk with $M$-exponent
$\gamma_d(k+1)/(k+1+\gamma_d)$, $\gamma_d=\min(1/d,1/2)$ ($M$ the total sample size). It follows from
a \emph{temporal-to-spatial reduction}: the smoothness budget defines a reachable $W_2$-ball into
which a transport packing is embedded along the time axis, and the information of the \emph{entire}
snapshot experiment is controlled by a Fano argument --- the spatial packing is classical, but its
smoothness-admissible temporal embedding and the full-window analysis are new. The bound interpolates
a dimension-free extrapolation floor of order $\varepsilon h^{k+1}$ --- the irreducible cost of an
unobserved future, present even with the exact past --- and the spatial estimation curse
$M^{-\gamma_d}$, recovering the static distribution-estimation rate as $k\to\infty$. We state the
lower bound in a design-dependent form --- with a design-weighted effective sample size --- valid for
arbitrary observation times, and obtain the closed-form exponent in the dense (equispaced) regime. The matching upper bound is established at
$k=0$ (rate $M^{-1/(d+1)}$, $d\ge3$) and, in a translation submodel, for all $k$; for $k\ge1$ a
covariant estimator attains the rate \emph{conditionally} on two estimates (a comparison-geometry
bias bound and an optimal-transport map-estimation rate), leaving the unconditional general-$k$ upper
bound as an explicit open problem. Numerical experiments on synthetic curved and flat families
corroborate the predicted exponents.
\end{abstract}

\section{Introduction}

Many systems are described not by a state but by a \emph{distribution} that drifts in time: the
cross-sectional distribution of incomes or firm sizes from year to year, the law of a particle
ensemble under a slowly changing potential, a population of single cells across developmental
time, the distribution of intraday returns from one day to the next, or the law of the hidden
states and token embeddings as a sequence model runs. In each case one observes a trajectory
$t\mapsto\mu_t$ of probability measures and would like to \emph{forecast} the measure
$\mu_{t_n+h}$ a horizon $h$ beyond the last observation. A large and active literature builds
estimators for this task --- Wasserstein autoregression, Koopman-operator models,
distribution-on-distribution regression --- yet a prior question is left open: \emph{how far
ahead is an evolving distribution forecastable at all, and what sets the limit?}

We study this question in the $2$-Wasserstein space $\PP$ with its Otto--Benamou--Brenier
Riemannian structure, the natural geometry for measures that move by transport. The single
assumption is regularity in time: the velocity field $v_t$ of the curve has a bounded $k$-th
covariant derivative, $\|\nabla_t^k v_t\|\le\varepsilon$, defining an \emph{adiabatic}
(slowly-varying) class $\Ck$. The index $k$ graduates the assumption --- $k=0$ bounds the speed,
$k=1$ the acceleration (a near-geodesic curve), $k=2$ a near-spline --- the Wasserstein analogue
of assuming a forecastable scalar signal has $k$ bounded derivatives, the setting of classical
extrapolation theory.

\paragraph{A tension between time and space.} Forecasting an evolving distribution is governed by
two opposing forces. \emph{Temporal smoothness helps:} a curve with a bounded $(k{+}1)$-st
derivative can be extrapolated by a Taylor/geodesic rule whose error grows only as $h^{k+1}$, so
more controlled derivatives buy a longer forecastable horizon. \emph{Spatial dimension hurts:} a
measure on $\R^d$ must be learned from finitely many samples, and even the empirical measure
converges in $W_2$ only at the dimension-cursed rate $M^{-1/d}$ for $d>2$ (with parametric saturation $M^{-1/2}$ in low dimension). The central object of this paper
is how these forces combine into a single forecasting limit, and the phase structure ---
extrapolation-limited versus statistics-limited --- that results.

\paragraph{Contributions.} We prove a hierarchy of lower bounds, each matched by an explicit
forecaster to the extent stated, and are deliberate about which half --- lower bound, upper
bound, or constant --- is established. The \emph{centerpiece} is the unified temporal--spatial
minimax lower bound (Theorem~\ref{thm:unified}); its temporal-to-spatial reduction
(Lemma~\ref{lem:reach}, Appendix~\ref{app:reduction}) is the paper's main technical step, and the
exact-past and location-channel results below are both its ingredients and results in their own
right. Table~\ref{tab:summary} states exactly what is proven, conditional, or open.
\begin{itemize}
\item \textbf{Exact-past floors.} Even given the entire past exactly, no forecaster beats a
worst-case floor $\varepsilon h^{k+1}/(k+1)!$ (Theorem~\ref{thm:lecam}) or, under a process
prior, a complementary average-case floor $d\,\sigma^2 h^{2k+1}/((k!)^2(2k+1))$
(Theorem~\ref{thm:bayes}); the order-$k$ geodesic/spline extrapolator attains the worst-case
scaling, with the \emph{exact} constant on flat translation submodels
(Proposition~\ref{prop:sharp}).
\item \textbf{A clean statistical separation.} With $M$ samples the limit splits into a
dimension-free extrapolation floor and a dimension-cursed shape floor that do not interact
(Theorem~\ref{thm:stat}); in the \emph{location channel} the latter sharpens to
$M^{-(k+1)/(2k+3)}$ (Theorem~\ref{thm:sharp}), the classical H\"older rate
$M^{-\beta/(2\beta+1)}$, $\beta=k+1$, lifted to distribution forecasting.
\item \textbf{A unified temporal--spatial rate.} Over regular, locally transport-rich subclasses of $\PP$ we prove a lower bound, valid for all
$k$, with $M$-exponent $\gamma_d(k+1)/(k+1+\gamma_d)$, $\gamma_d=\min(1/d,1/2)$
(Theorem~\ref{thm:unified}), interpolating smoothness and the spatial curse and recovering the
static shape rate $M^{-\gamma_d}$ as $k\to\infty$; the temporal-to-spatial reduction behind it ---
turning the smoothness budget into a reachable $W_2$-ball and embedding a transport packing along
the time axis (Lemma~\ref{lem:reach}, Appendix~\ref{app:reduction}) --- is the central technical
step. The matching upper bound is established at $k=0$
(Theorem~\ref{thm:upper0}, rate $M^{-1/(d+1)}$ for $d\ge3$), sharp there; for $k\ge1$ we give a
covariant (development-based) forecaster that attains it \emph{conditionally} on two estimates --- a
comparison-geometry bias bound and an optimal-transport map-estimation rate
(Proposition~\ref{prop:covariant}). The \emph{unconditional} matching upper bound for $k\ge1$ remains
Conjecture~\ref{conj:upper}.
\item \textbf{Phase structure and verification.} The bounds predict a sharp boundary between
extrapolation- and statistics-limited regimes; numerics confirm the integer horizon exponents and
the $(N,h)$ phase boundary, track the unified exponent's spatial curse out to $d=6$, and show ---
end-to-end on a morphing nonparametric shape --- that a moving distribution is forecastable strictly
more slowly than a static one, as the unified rate predicts.
\end{itemize}

\begin{table}[t]
\centering\small
\begin{tabular}{lcccc}
\hline
Result & Range & Lower & Upper & Constant \\
\hline
Exact-past floor $\varepsilon h^{k+1}$ & all $k$ & proven & submodels & exact (flat) \\
Average-case floor $\sigma^2 h^{2k+1}$ & all $k$ & proven & submodels & exact (Gaussian) \\
Location channel $M^{-(k+1)/(2k+3)}$ & all $k$ & proven & proven & rate \\
Unified $M^{-\gamma_d(k+1)/(k+1+\gamma_d)}$ & $k=0$ & proven & proven & rate \\
Unified (same exponent) & $k\ge1$ & proven & conditional & open \\
\hline
\end{tabular}
\caption{What is established. ``Submodels'': the matching upper bound holds on flat translation
(and Gaussian) submodels. ``Conditional'': Proposition~\ref{prop:covariant} attains the rate under
the comparison-geometry and map-estimation estimates~(C),(S); the unconditional statement is
Conjecture~\ref{conj:upper}.}
\label{tab:summary}
\end{table}

\paragraph{Technique.} Two reductions carry the lower bounds. A translation embedding realizes
$\R^d$ as a flat, totally geodesic submanifold of $\PP$ (Lemma~\ref{lem:embed}), reducing the
exact-past and location-channel bounds to scalar extrapolation and Le~Cam/van~Trees two-point
arguments. For the unified bound a \emph{reachability lemma} (Lemma~\ref{lem:reach}) shows that
any measure within a smoothness-budgeted $W_2$-ball is reached by an admissible curve, obtained by
reparametrizing a $W_2$-geodesic in time; this turns temporal forecasting into spatial estimation
from the pooled in-window samples, where the empirical-$W_2$ minimax rate enters. The classical
empirical-$W_2$ exponent is recovered, not assumed: Appendix~\ref{app:reduction} constructs the
spatial packing explicitly; the temporal reduction and the reachability construction are ours.

\paragraph{Scope and outline.} Section~\ref{sec:stat} contains the finite-sample theory and
Section~\ref{sec:unified} the unified rate; the remaining sections give the exact-past floors, the
$k=0$ matching upper bound (with the conditional $k\ge1$ construction in
Appendix~\ref{app:covariant}), and a numerical illustration. Proofs are deferred to
Appendix~\ref{app:proofs}; Appendix~\ref{app:forecaster} details the degree-$k$ forecaster.

\section{Related work}

\paragraph{Distributional and functional time series.} Forecasting a measure-valued trajectory is
an active methodological area. Wasserstein autoregression models density time series in the
tangent space of $\PP$ (Zhang--Kokoszka--Petersen~\cite{war}), with inferential and diagnostic
extensions; Koopman-operator methods lift the dynamics to a linear evolution on observables
(Wang--Araki~\cite{kw}); and distribution-on-distribution regression learns transport maps between
measures (Ghodrati--Panaretos~\cite{gp}). These sit within functional data analysis and functional
time series more broadly. All propose \emph{estimators}; we instead ask for the \emph{limits} that
bound any such method under a smoothness-only assumption --- a complementary and, to our knowledge,
previously unaddressed question for Wasserstein forecasting.

\paragraph{Trajectory inference and population dynamics.} Reconstructing how a distribution evolves
from temporal snapshots is central to single-cell genomics, where optimal-transport methods recover
developmental trajectories (Schiebinger et al.~\cite{schiebinger}) and a mathematical theory of
trajectory inference has emerged (Lavenant et al.~\cite{lavenant}). That line \emph{interpolates} a
population between observed times; we study the distinct, harder problem of \emph{extrapolating} it
beyond the last observation, and the fundamental limit on doing so.

\paragraph{Geometry of Wasserstein space.} Our regularity class and extrapolators rest on the
Otto calculus of $\PP$ (Otto~\cite{otto}; Benamou--Brenier~\cite{bb};
Ambrosio--Gigli--Savar\'e~\cite{ags}; Villani~\cite{villani}) and its second-order theory
(Gigli~\cite{gigli}), in particular geodesics and the covariant derivative of velocity fields.
Smooth interpolation on Wasserstein space --- splines and higher-order models
(Benamou--Gallou\"et--Vialard~\cite{bgv}; Chewi et al.~\cite{chewi}) --- provides the $k\ge1$
extrapolators whose forecast error we bound, and we quantify when curvature corrections enter
(Proposition~\ref{prop:sharp}).

\paragraph{Empirical measures in Wasserstein distance.} The spatial half of our rates is governed
by convergence of the empirical measure in $W_2$, classical since Dudley~\cite{dudley} and
sharpened by Fournier--Guillin~\cite{fg} and Weed--Bach~\cite{weedbach}, with matching minimax
density-estimation rates by Niles-Weed--Berthet~\cite{nwb} and Singh--P\'oczos~\cite{sp}. We invoke
these at the optimized temporal bandwidth; the dimension curse they exhibit is exactly what
degrades the forecast exponent away from the parametric rate.

\paragraph{Nonparametric estimation and extrapolation.} The location channel reproduces the
classical minimax theory of nonparametric estimation under H\"older smoothness
(Stone~\cite{stone}; Tsybakov~\cite{tsybakov}): the rate $M^{-\beta/(2\beta+1)}$, $\beta=k+1$,
appears here as a forecasting lower bound --- the extrapolation (boundary) instance of that theory.
We make this correspondence explicit rather than presenting the rate as new.

\paragraph{Prediction under temporal change.} Forecasting a smooth signal from its past is the
classical Kolmogorov--Wiener problem; our exact-past floors are its distributional,
finite-smoothness analogue. Statistically, slow variation in time is the premise of locally
stationary processes (Dahlhaus~\cite{dahlhaus}) and, in machine learning, of learning under
concept drift (Gama et al.~\cite{gama}); our bounds quantify the price such drift imposes on
distributional forecasting, with the optimal pooling bandwidth emerging from a bias--variance
balance.

\section{Setup}

Let $(\PP, W_2)$ carry the Otto--Benamou--Brenier formal Riemannian structure. We
observe $t \mapsto \mu_t$ on $[0,t_n]$ and forecast $\mu_{t_n+h}$, $h>0$. A
\emph{forecaster} is any measurable $\hat\nu = \hat\nu(\mu|_{[0,t_n]}) \in \PP$. For
an absolutely continuous curve the velocity field $v_t$ satisfies
$\partial_t \mu_t + \nabla\cdot(\mu_t v_t)=0$; $\nabla_t$ denotes the covariant
derivative of vector fields along the curve (Gigli's second-order calculus, assuming
the requisite tangent-module regularity), with $\|\cdot\|_{\mu_t}$ the $L^2(\mu_t)$
tangent norm.

\begin{definition}[Order-$k$ slow-variation class]\label{def:class}
For $k\ge 0$, $\varepsilon>0$, let $\Ck$ be the absolutely continuous curves whose
velocity field admits covariant derivatives up to order $k$ with
$\operatorname*{ess\,sup}_{t}\| \nabla_t^{\,k} v_t\|_{\mu_t}\le \varepsilon$,
$v_t=\dot\mu_t$. Informally the curve's $(k{+}1)$-st covariant derivative is bounded
by $\varepsilon$ (geodesic for $k{=}1$, cubic spline for $k{=}2$).
\end{definition}

\paragraph{Two elementary facts.} Both deterministic bounds reduce to a shift problem.

\begin{lemma}[Isometric translation embedding]\label{lem:embed}
Fix $\rho\in\PP$, $\tau_x(z)=z+x$. Then $x \mapsto (\tau_x)_\#\rho$ is an isometric
embedding of $(\R^d,|\cdot|)$ into $(\PP,W_2)$; its image is totally geodesic and flat.
\end{lemma}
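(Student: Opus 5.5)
The plan is to compute $W_2\bigl((\tau_x)_\#\rho,(\tau_y)_\#\rho\bigr)$ exactly by matching an explicit coupling against a universal lower bound coming from the means. After that, total geodesicity and flatness follow from the explicit form of the geodesics.

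\textbf{Upper bound.} Take the coupling $(\tau_x,\tau_y)_\#\rho$, i.e.\ transport $z+x\mapsto z+y$. Its cost is $\int|x-y|^2\,d\rho=|x-y|^2$, so $W_2\le|x-y|$.

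\textbf{Lower bound.} For any coupling $\pi$ of the two measures, Jensen gives
\[
\int|u-w|^2\,d\pi\ \ge\ \Bigl|\int(u-w)\,d\pi\Bigr|^2=|\mean_1-\mean_2|^2 .
\]
The means are $\mean(\rho)+x$ and $\mean(\rho)+y$, both finite since $\rho\in\PP$. Hence $W_2\ge|x-y|$, and the map is an isometric embedding. This is also directly Lemma-level folklore: the shift is the gradient of the convex function $|z|^2/2+\langle y-x,z\rangle$, so by Brenier it is optimal.

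\textbf{Totally geodesic.} I will exhibit the constant-speed geodesic between $(\tau_x)_\#\rho$ and $(\tau_y)_\#\rho$ as displacement interpolation along the optimal map, namely $\bigl((1-s)\tau_x+s\tau_y\bigr)_\#\rho=(\tau_{(1-s)x+sy})_\#\rho$, which stays in the image. For totally geodesic in the strong sense, I also need every geodesic between the two points to be of this form. I expect this to be the main subtlety. I would argue it as follows: equality in $W_2=|x-y|$ forces any geodesic's midpoint to satisfy equality in the Jensen step along a gluing. That makes the displacement $u-w$ $\pi$-a.s.\ constant, so the optimal plan is unique and is the shift. Geodesics in $\PP$ between measures with a unique optimal plan are exactly the displacement interpolations of that plan.

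\textbf{Flat.} Flatness then follows formally. The image is isometric to Euclidean $\R^d$ with its geodesics mapped to geodesics, so the induced metric is flat. In Otto's picture, the tangent vectors are constant fields $v\equiv x-y$, and the velocity of a curve $(\tau_{x(t)})_\#\rho$ is $\dot x(t)$. Its covariant derivative is $\ddot x(t)$, since constant fields have vanishing second-order correction. So $\|\nabla_t^k v_t\|_{\mu_t}=|x^{(k+1)}(t)|$, which is the reduction used later. The only real care needed is the uniqueness claim in the totally geodesic step; the rest is routine.
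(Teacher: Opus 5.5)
Your proposal is correct and follows essentially the same route as the paper: the shift coupling gives the upper bound, optimality of the translation gives the lower bound, displacement interpolation of two translates is again a translate (totally geodesic), and the induced metric is Euclidean (flat); for the lower bound you use the mean/Jensen argument, which is exactly Lemma~\ref{lem:mean}, alongside the Brenier-optimality argument the paper uses. Your extra uniqueness step is valid and slightly strengthens the paper's one-line proof: equality in Jensen forces every optimal plan to be the pure shift, so every $W_2$-geodesic between two translates is a translate.
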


\begin{lemma}[Mean contraction]\label{lem:mean}
$W_2(\alpha,\beta)\ge|\mean(\alpha)-\mean(\beta)|$ for all $\alpha,\beta\in\PP$.
\end{lemma}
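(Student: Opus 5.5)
The plan is to go through a coupling. Let $\pi$ be any coupling of $\alpha$ and $\beta$, and take $(X,Y)\sim\pi$. Since both measures lie in $\PP$, both have finite first moments, so $\mean(\alpha)=\E X$ and $\mean(\beta)=\E Y$ are well defined. Linearity then gives
\begin{equation*}
\mean(\alpha)-\mean(\beta)=\E[X-Y].
\end{equation*}

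Next I apply Jensen's inequality twice, using the convexity of $z\mapsto|z|$ and of $s\mapsto s^2$:
\begin{equation*}
|\E[X-Y]|\le \E|X-Y|\le \bigl(\E|X-Y|^2\bigr)^{1/2}.
\end{equation*}
Equivalently, the first step is the triangle inequality for the Bochner integral on $\R^d$ and the second is Cauchy--Schwarz. This yields $|\mean(\alpha)-\mean(\beta)|\le \bigl(\int|x-y|^2\,d\pi\bigr)^{1/2}$ for every coupling $\pi$.

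The left-hand side does not depend on $\pi$, so taking the infimum over all couplings gives exactly $W_2(\alpha,\beta)$ on the right. No step is a real obstacle. The only points to check are that the means are finite, which holds because $\PP$ has finite second and hence first moments, and that the infimum in the definition of $W_2$ is over the same set of couplings. Existence of an optimal coupling is not needed, since the inequality holds for every $\pi$.

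Equality holds in the translation submodel of Lemma~\ref{lem:embed}. With the coupling $z\mapsto z+x$ one gets $W_2((\tau_x)_\#\rho,\rho)\le|x|$, while the mean shifts by exactly $x$, so the bound is tight there. This is what makes it the right tool for the reductions in the paper.
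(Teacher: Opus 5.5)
Your proof is correct and follows essentially the same route as the paper: fix an arbitrary coupling, write the mean difference as $\E_\pi[X-Y]$, bound it by $(\E_\pi|X-Y|^2)^{1/2}$, and infimize over couplings. Your extra remarks on finiteness of the means and tightness on the translation submodel are accurate and harmless.
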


\section{Worst-case (Le Cam) lower bound}

\begin{theorem}[Extrapolation floor, worst case]\label{thm:lecam}
For every integer $k\ge 0$, $h>0$, $\varepsilon>0$,
\[
  \inf_{\hat\nu}\ \sup_{\mu_\bullet\in\Ck}\ W_2(\hat\nu,\mu_{t_n+h})\ \ge\ \frac{\varepsilon\,h^{k+1}}{(k+1)!}.
\]
\end{theorem}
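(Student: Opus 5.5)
The plan is a two-point (Le Cam) argument inside the flat translation submodel of Lemma~\ref{lem:embed}. Fix any $\rho\in\PP$ and a unit vector $e\in\R^d$. We build two curves that agree on $[0,t_n]$ but separate by exactly $2\varepsilon h^{k+1}/(k+1)!$ at time $t_n+h$. Both will be of the form $\mu^{\pm}_t=(\tau_{x_\pm(t)})_\#\rho$, with scalar paths
\[
x_\pm(t)=\pm\,\frac{\varepsilon\,(t-t_n)_+^{k+1}}{(k+1)!}\,e .
\]
These paths coincide (both equal $0$) for $t\le t_n$.

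The first step is to verify admissibility, i.e.\ that $\mu^\pm_\bullet\in\Ck$. Along a translation curve $(\tau_{x(t)})_\#\rho$, the continuity equation is solved by the spatially constant velocity field $v_t\equiv\dot x(t)$. This field is a gradient (of $z\mapsto\langle\dot x(t),z\rangle$), so it is tangent. By Lemma~\ref{lem:embed} the image is flat and totally geodesic, so the covariant derivative along the curve reduces to the ordinary time derivative of the constant vector field. Consequently $\nabla_t^k v_t\equiv x^{(k+1)}(t)$ and $\|\nabla_t^k v_t\|_{\mu_t}=|x^{(k+1)}(t)|$, which equals $0$ for $t<t_n$ and $\varepsilon$ for $t>t_n$. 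The path $x_\pm$ is $C^{k}$ with a bounded $(k+1)$-st derivative that jumps only at the single point $t_n$. Since Definition~\ref{def:class} uses an essential supremum, this jump is harmless, and the derivatives up to order $k$ of $v$ exist in the required (a.e./absolutely continuous) sense.

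This identification of Gigli's covariant derivative with $\frac{d}{dt}$ on the flat translation slice is the step I expect to need the most care. I would argue it directly rather than lean only on ``totally geodesic''. Because $\nabla_t v=P(\partial_t v+ (v\cdot\nabla)v)$, with $P$ the projection onto the tangent space, and $v$ is constant in space, we get $(v\cdot\nabla)v=0$ and $\partial_t v=\dot v$, which is already a gradient. Hence $\nabla_t v=\dot v$, and iterating gives $\nabla_t^k v_t = x^{(k+1)}(t)$. If one wishes to avoid the one-sided kink, a smoothed version $\varepsilon\phi_\delta$ of the step in $x^{(k+1)}$ works, with $\delta\to0$ at the end.

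Finally, any forecaster $\hat\nu$ is a function of $\mu|_{[0,t_n]}$, and this restriction is the same translate $\rho$ for both curves, so $\hat\nu$ returns the same measure under both. Lemma~\ref{lem:embed} gives $W_2(\mu^+_{t_n+h},\mu^-_{t_n+h})=|x_+(t_n+h)-x_-(t_n+h)|=2\varepsilon h^{k+1}/(k+1)!$. The triangle inequality then yields
\[
\max_\pm W_2(\hat\nu,\mu^\pm_{t_n+h})\ge \tfrac12\,W_2(\mu^+_{t_n+h},\mu^-_{t_n+h})=\frac{\varepsilon h^{k+1}}{(k+1)!}.
\]
Taking the supremum over $\Ck$ and then the infimum over $\hat\nu$ proves the theorem. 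Lemma~\ref{lem:mean} gives an alternative route to the same separation via the means $\pm\varepsilon h^{k+1}e/(k+1)!$, and that route is what would make the bound robust if one later randomizes $\rho$.
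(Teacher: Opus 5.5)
Your proposal is correct and follows essentially the same proof as the paper. You use the same two translated curves $x_\pm(t)=\pm\frac{\varepsilon}{(k+1)!}(t-t_n)_+^{k+1}e$ in the flat slice of Lemma~\ref{lem:embed}, admissibility via the ess-sup bound (or mollification, Lemma~\ref{lem:moll}), agreement of the curves on $[0,t_n]$, and the triangle inequality on the separation $2\varepsilon h^{k+1}/(k+1)!$. Your explicit Otto-calculus check that $\nabla_t^k v_t=x^{(k+1)}(t)$ for spatially constant velocity fields usefully spells out the step the paper compresses into ``the directions are flat.''
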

\begin{lemma}[Mollification]\label{lem:moll}
Convolving $x_b^{(k+1)}$ with a width-$\eta$ kernel makes the construction $C^{k+1}$
with $\|\nabla^k v\|\le\varepsilon$ and changes the separation by $1+O(\eta/h)$.
\end{lemma}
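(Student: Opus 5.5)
The approach is to realize the two-point construction of Theorem~\ref{thm:lecam} inside the flat translation slice of Lemma~\ref{lem:embed}. That turns the statement into a one-dimensional calculus fact about a mollified step function, plus a check that covariant derivatives along translation curves are ordinary derivatives. I take the unmollified alternative to be the curve with center $x_b(t)=\varepsilon\,e\,(t-t_n)_+^{k+1}/(k+1)!$, for a unit vector $e$; the null alternative has center $x\equiv 0$. Then $x_b^{(k+1)}=\varepsilon e\,\mathbf 1_{\{t>t_n\}}$ is a step function, so $x_b$ is only $C^{k}$ with a jump in its top derivative, and this jump is what the mollification removes.

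\textbf{Step 1 (mollify one-sidedly).} Let $\varphi_\eta\ge0$ be a smooth kernel of unit mass supported in $[0,\eta]$. Set $g_\eta=\varphi_\eta * (\varepsilon e\,\mathbf 1_{\{\cdot>t_n\}})$. Because the kernel is forward-supported, $g_\eta$ vanishes on $(-\infty,t_n]$ and equals $\varepsilon e$ on $[t_n+\eta,\infty)$. It is smooth, and since it is a convex average of values of norm at most $\varepsilon$, we have $|g_\eta|\le\varepsilon$. Define $x_\eta$ by integrating $k+1$ times from $t_n$ with zero initial data:
\[
x_\eta(t)=\frac1{k!}\int_{t_n}^{t}(t-s)^k g_\eta(s)\,ds .
\]
Then $x_\eta\in C^\infty$ and $x_\eta^{(k+1)}=g_\eta$. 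Moreover $x_\eta\equiv0$ on $[0,t_n]$, so the two alternatives still have identical pasts, which is the feature Le Cam's argument needs.

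\textbf{Step 2 (lift and control $\nabla_t^k v$).} Put $\mu_t=(\tau_{x_\eta(t)})_\#\rho$. Its velocity field is the spatially constant field $v_t(z)=\dot x_\eta(t)$. This field is the gradient of a linear function, hence a tangent vector, and it solves the continuity equation. Along the curve the covariant derivative is $\nabla_t v=\partial_t v+P_{\mu_t}[(Dv)v]$. Here $Dv=0$, so $\nabla_t v=\ddot x_\eta$, and by induction $\nabla_t^k v=x_\eta^{(k+1)}=g_\eta$. This is just the statement in Lemma~\ref{lem:embed} that the image is flat and totally geodesic. Consequently $\|\nabla_t^k v_t\|_{\mu_t}=|g_\eta(t)|\le\varepsilon$, and the mollified curve lies in $\Ck$.

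\textbf{Step 3 (separation).} By Lemma~\ref{lem:embed}, or by Lemma~\ref{lem:mean} for the lower bound, the $W_2$ separation at $t_n+h$ equals $|x_\eta(t_n+h)|$. Assume $\eta<h$. The mollified and unmollified integrands differ only on $[t_n,t_n+\eta]$, where $|g_\eta-\varepsilon e\mathbf 1|\le\varepsilon$ and $(t_n+h-s)^k\le h^k$. Hence
\[
\bigl|x_\eta(t_n+h)-x_b(t_n+h)\bigr|\le \frac{\varepsilon h^k\eta}{k!}
=\frac{\varepsilon h^{k+1}}{(k+1)!}\cdot\frac{(k+1)\eta}{h},
\]
so the separation is multiplied by a factor $1+O(\eta/h)$, with an explicit constant $k+1$. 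Letting $\eta\downarrow0$ recovers the floor of Theorem~\ref{thm:lecam} in the limit.

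The only step needing care is Step 2, namely justifying within Gigli's calculus that $\nabla_t^k$ of a spatially constant velocity reduces to the ordinary $k$-th time derivative. I would argue it from the formula $\nabla_t v=P(\partial_t v+Dv\,v)$ together with $Dv\equiv0$ and tangency of constant fields. The alternative is to cite total geodesy of the translation slice from Lemma~\ref{lem:embed}: the intrinsic flat connection on $\R^d$ then agrees with the ambient one.
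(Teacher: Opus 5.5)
Your argument is correct and follows the route the paper indicates. The paper gives no separate proof of Lemma~\ref{lem:moll}; it relies on the flat translation slice of Lemma~\ref{lem:embed}, where $\|\nabla_t^k v_t\|=|x^{(k+1)}(t)|$. Your forward-supported kernel is exactly the detail that keeps the two pasts identical on $[0,t_n]$, since a centred kernel would leak into the observed window. Your bound then gives an explicit separation factor $1-(k+1)\eta/h$.

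One small correction: to recover the exact constant of Theorem~\ref{thm:lecam} as $\eta\downarrow0$, mollify the paper's symmetric pair $\pm x_b$ with the same kernel. Pairing $x_b$ with $x\equiv0$ yields only half the floor through the triangle inequality. The relative factor $1+O(\eta/h)$ is unaffected by this change.
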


\section{Average-case lower bound (conditional Bayes floor)}

\begin{theorem}[Conditional Bayes floor]\label{thm:bayes}
Fix $\rho$ and let $\mu_t=(\tau_{x(t)})_\#\rho$ with $x^{(k+1)}(t)=\xi(t)$, $\xi$ white
noise of intensity $\sigma^2 I_d$ (a $(k{+}1)$-fold integrated Brownian motion). Then
for every forecaster measurable w.r.t.\ $\{\mu_u:u\le t_n\}$,
\[
   \E\, W_2^2(\hat\nu,\mu_{t_n+h})\ \ge\ \frac{d\,\sigma^2\,h^{2k+1}}{(k!)^2(2k+1)},
\]
with equality, within this model, for the order-$k$ Taylor extrapolator (the
conditional mean). This stochastic model is the \emph{average-case analogue} of $\Ck$: its $(k{+}1)$-st
derivative has \emph{variance} $\sigma^2$ rather than an almost-sure sup bound, so it is not
contained in the deterministic class. The worst-case floor of Theorem~\ref{thm:lecam} and this
average-case floor are therefore complementary, not nested.
\end{theorem}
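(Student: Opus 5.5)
The plan is to reduce everything to a Gaussian prediction problem on the translation coordinate $x(t)\in\R^d$, and then to transfer the answer back to $W_2$ through Lemmas~\ref{lem:embed} and~\ref{lem:mean}.

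\textbf{Reduction to the shift.} Because $\rho$ is fixed and known, and $x\mapsto(\tau_x)_\#\rho$ is injective (Lemma~\ref{lem:embed}), the past $\{\mu_u:u\le t_n\}$ generates the same $\sigma$-field as $\{x(u):u\le t_n\}$. Any forecaster $\hat\nu$ is therefore a measurable function of the past path. By Lemma~\ref{lem:mean}, $W_2(\hat\nu,\mu_{t_n+h})\ge|\mean(\hat\nu)-\mean(\rho)-x(t_n+h)|$. We set $\hat x:=\mean(\hat\nu)-\mean(\rho)$, which is again measurable with respect to the past. This gives $\E W_2^2\ge\E|\hat x-x(t_n+h)|^2\ge\E\,\mathrm{tr}\,\mathrm{Cov}\big(x(t_n+h)\mid\mathcal F_{t_n}\big)$, because the conditional mean minimizes mean-square error.

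\textbf{Conditional variance computation.} The state $(x,\dot x,\dots,x^{(k)})$ is Markov. Conditioning on the past path therefore amounts to conditioning on this state at $t_n$; in fact the full path reveals these derivatives. Taylor's formula with integral remainder gives
\[
x(t_n+h)=\sum_{j=0}^{k}\frac{h^j}{j!}x^{(j)}(t_n)+\int_{t_n}^{t_n+h}\frac{(t_n+h-s)^k}{k!}\,dB_s .
\]
The remainder is independent of $\mathcal F_{t_n}$ and has mean zero. By the Itô isometry, its covariance is $\sigma^2 I_d\int_0^h u^{2k}/(k!)^2\,du=\sigma^2h^{2k+1}/((k!)^2(2k+1))\,I_d$. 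Taking the trace yields the factor $d$. The conditional mean is exactly the order-$k$ Taylor polynomial, so the bound is attained in $x$-space.

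\textbf{Equality in $W_2$.} Take $\hat\nu=(\tau_{\hat x})_\#\rho$ with $\hat x$ the Taylor extrapolant. By the isometry of Lemma~\ref{lem:embed}, $W_2(\hat\nu,\mu_{t_n+h})=|\hat x-x(t_n+h)|$, so equality holds.

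The main obstacle is rigor rather than computation. First, the white-noise formulation must be read as $dx^{(k)}=\sigma\,dB$. Second, we must justify that the past measure path determines $x^{(j)}(t_n)$ for $j\le k$: derivatives of a continuous path on $[0,t_n]$ exist from the left for $j\le k$, since $x^{(k)}$ is a Brownian path. Finally, we must check that the $W_2$-measurability of $\hat\nu$ transfers to measurability of $\hat x$, using continuity of $\mean$ on $\PP$.
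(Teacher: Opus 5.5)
Your proposal is correct and follows essentially the same route as the paper. Both use mean contraction (Lemma~\ref{lem:mean}) to reduce to the shift $x(t_n+h)$, then the Markov $k$-jet and Taylor's formula with integral remainder to split off an independent residual of covariance $\sigma^2h^{2k+1}I_d/((k!)^2(2k+1))$, whose trace gives the factor $d$. You additionally make explicit the equality case via $\hat\nu=(\tau_{\hat x})_\#\rho$ and the isometry of Lemma~\ref{lem:embed}, as well as the measurability of $\hat x$; the paper leaves both implicit. One small fix: with $dx^{(k)}=\sigma\,dB$, your stochastic integral should carry the factor $\sigma$.
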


This floor is the irreducible \emph{process noise} between $t_n$ and $t_n+h$; no amount
of memory or data removes it.

\section{Matching upper bound and the role of curvature}

\begin{lemma}[Curvature-controlled remainder; rigorous on flat/finite-dim submodels]
\label{lem:curv}
Let $\mu_\bullet$ be $C^{k+1}$ with well-defined endpoint $k$-jet and $P_k$ its order-$k$
extrapolator. On the flat translation submodel,
$W_2(\mu_{t_n+h},P_k(t_n+h))=\frac{h^{k+1}}{(k+1)!}\|\nabla_t^{k}v_{t_n}\|_{\mu_{t_n}}$
exactly. On a finite-dimensional totally geodesic submanifold (e.g.\ centered Gaussians
under the Bures metric) a normal-coordinate computation gives the same leading term for
$k\le 2$, curvature entering only at order $h^{k+3}$ via the identity
$(\partial_m\Gamma^i_{jk})(p)v^mv^jv^k=0$ (Riemann antisymmetry against the symmetric
$v^{\otimes3}$). On general $\PP$ this expansion is a \emph{formal} Otto-calculus
computation.
\end{lemma}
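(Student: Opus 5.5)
The statement has three parts, and I will prove them in the order they are stated.

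\emph{Flat part.} Write $\mu_t=(\tau_{x(t)})_\#\rho$ with $x\in C^{k+1}$. By Lemma~\ref{lem:embed} this is an isometric, totally geodesic and flat embedding, so the velocity field is the constant vector field $v_t\equiv\dot x(t)$. The covariant derivative along the curve is then the ordinary derivative: $\nabla_t^k v_t\equiv x^{(k+1)}(t)$, and $\|\nabla_t^k v_t\|_{\mu_t}=|x^{(k+1)}(t)|$.

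The order-$k$ extrapolator, defined via parallel transport and the exponential map, reduces in flat coordinates to translating $\rho$ by the degree-$k$ Taylor polynomial $T_k(t_n+h)$ of $x$ at $t_n$. Isometry of the embedding gives $W_2(\mu_{t_n+h},P_k(t_n+h))=|x(t_n+h)-T_k(t_n+h)|$. I then write this as the integral Taylor remainder
\[\int_{t_n}^{t_n+h}\frac{(t_n+h-s)^k}{k!}\,x^{(k+1)}(s)\,ds.\]

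The exact identity cannot hold for arbitrary $C^{k+1}$ paths; in general the remainder is only bounded above by $\frac{h^{k+1}}{(k+1)!}\sup|x^{(k+1)}|$. So I will read the claim as holding when $x^{(k+1)}$ is constant on the window, or equivalently as the leading-order term in $h$ for a general $C^{k+1}$ path. Under that reading the remainder evaluates to $\frac{h^{k+1}}{(k+1)!}|x^{(k+1)}(t_n)|$. I will state this interpretation explicitly in the proof.

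\emph{Finite-dimensional part.} Take a totally geodesic submanifold such as centered Gaussians with the Bures metric, and work in Riemannian normal coordinates centered at $p=\mu_{t_n}$. In these coordinates $\Gamma(p)=0$, and the exponential map is the identity on coordinate lines through the origin. So $P_k(t_n+h)$ has coordinates equal to a polynomial in $h$ built from the covariant jets. I will expand the curve $y(t)$ in coordinates and convert its ordinary derivatives to covariant ones using $\nabla_t^j v=y^{(j+1)}+(\text{terms in }\Gamma,\partial\Gamma,\dots)$.

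At $p$ the $\Gamma$-terms vanish. The first correction in $\dddot y$ is $-\partial_m\Gamma^i_{jk}(p)\,v^mv^jv^k$, which vanishes by the symmetrization identity quoted in the statement. In normal coordinates the symmetrized derivatives of $\Gamma$ vanish, a consequence of the antisymmetry of the Riemann tensor.

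Tracking orders gives:
\begin{itemize}
\item for $k\le 2$, the coordinate jets of the curve and of $P_k$ agree up to order $h^{k+1}$, and the first curvature discrepancy appears at order $h^{k+3}$;
\item the coordinate distance approximates the geodesic distance up to a factor $1+O(|y|^2)$, which again contributes only at relative order $h^2$.
\end{itemize}
Together these yield the leading term $\frac{h^{k+1}}{(k+1)!}\|\nabla_t^k v_{t_n}\|$ plus $O(h^{k+3})$.

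\emph{Main obstacle.} The hard step is the bookkeeping for $k=2$. There, second derivatives of $\Gamma$ and products of curvature terms enter $y^{(3)}$, and I must check that they either cancel (by symmetry against $v^{\otimes 3}$) or push the error to order $h^5$. I would first verify this on a constant-curvature model, then do the general Taylor expansion of $\Gamma$ in normal coordinates, $\partial_{(m}\Gamma^i_{jk)}=0$ and $\partial_m\Gamma^i_{jk}=-\tfrac13(R^i_{jkm}+R^i_{kjm})$, and substitute.

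\emph{General $\PP$.} Here I claim nothing rigorous. I will note that the same computation can be carried out formally in Otto calculus, with Gigli's covariant derivative, but that it lacks justification of the Taylor expansion of the exponential map.
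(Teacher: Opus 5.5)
You follow the route the statement itself sketches (the paper gives no separate proof): Lemma~\ref{lem:embed} plus the integral Taylor remainder on the flat model, then normal coordinates at $p=\mu_{t_n}$ with $\Gamma(p)=0$ and $\partial_{(m}\Gamma^i_{jk)}(p)=0$, plus the $1+O(h^2)$ metric distortion. Your caveat on ``exactly'' is correct. It holds only when $x^{(k+1)}$ is constant on $[t_n,t_n+h]$, as for the two-point curves of Theorem~\ref{thm:lecam}; otherwise it is the leading term. The leading-term claim for $k\le 2$ also goes through as you describe.

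The step that fails is your claim that the first curvature discrepancy appears at $h^{k+3}$ for all $k\le 2$. You also put the $k=2$ difficulty in the wrong place. Nothing delicate happens in $y^{(3)}(0)$: since $\Gamma(p)=0$, products of Christoffel symbols vanish there and $\partial^2\Gamma$ does not occur. So $y^{(3)}(0)=\nabla_t^2v-\partial_m\Gamma^i_{jk}(p)v^mv^jv^k=\nabla_t^2v$ follows from the single identity.

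The trouble is the next coefficient. Write $\partial\Gamma(X,Y,Z)^i:=\partial_m\Gamma^i_{jk}(p)X^mY^jZ^k$ and $a=\nabla_tv(0)$. Expanding at $p$ gives $\nabla_t^3v=y^{(4)}+\partial^2\Gamma(v,v,v,v)+\partial\Gamma(a,v,v)+5\,\partial\Gamma(v,a,v)$. The $\partial^2\Gamma$ term vanishes by symmetry. The mixed terms do not cancel: the cyclic identity only gives $\partial\Gamma(a,v,v)=-2\,\partial\Gamma(v,a,v)$, and your formula $\partial\Gamma=-\tfrac13(R+R)$ turns $3\,\partial\Gamma(v,a,v)$ into $R(v,a)v$. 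Hence
\[
y(h)-P_2(h)=\tfrac{h^3}{6}\nabla_t^2v+\tfrac{h^4}{24}\big(\nabla_t^3v-R(v,\nabla_tv)v\big)+O(h^5).
\]
This is exactly the order-$4$ chart correction the paper records in Appendix~\ref{app:covariant}. The $W_2$ distance therefore has $h^4$-coefficient $\tfrac1{24}\langle\widehat{\nabla_t^2v},\nabla_t^3v-R(v,\nabla_tv)v\rangle$, which generically depends on curvature.

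You can check this on the unit sphere. Take the coordinate parabola $y(s)=se_1+\tfrac{s^2}{2}e_2$ in normal coordinates at the north pole. A direct computation through the embedding in $\R^3$ gives $\nabla_t^3v(0)=-e_2$, while $y^{(4)}(0)=0$. So for your extrapolator ($\exp_p$ of the degree-$k$ Taylor polynomial) curvature enters at $h^{k+2}$ when $k=2$, and no bookkeeping pushes it to $h^5$.

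What you can actually prove is the following:
\begin{itemize}
\item the $h^{k+3}$ statement for $k=0,1$;
\item for $k=2$, the correct leading term with an $O(h^{k+2})$ remainder, which is all Proposition~\ref{prop:sharp} uses.
\end{itemize}
To get $h^{k+3}$ at $k=2$ you must change $P_k$ to the development-based extrapolant $\mathrm{antidev}(T_k\widetilde\gamma)$ of Appendix~\ref{app:covariant}. Two facts make this work. First, a weight count (with $\Gamma(p)=0$) shows the chart correction of order $j$ involves only covariant jets of order $\le j-3$. Second, this extrapolant shares $\gamma$'s covariant jets through order $k-1$ and has vanishing higher ones. Curve and extrapolant then carry identical corrections through order $k+2$, so curvature first enters at $h^{k+3}$. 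Under that definition the restriction $k\le2$ becomes unnecessary.
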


\begin{proposition}[Sharp on flat submodels; rate match on regular finite-dimensional submanifolds]\label{prop:sharp}
On the flat translation submodel, and on finite-dimensional totally geodesic Wasserstein
submanifolds satisfying the normal-coordinate hypotheses of Lemma~\ref{lem:curv} (e.g.\ centered
Gaussians under the Bures metric, $k\le 2$), the order-$k$ extrapolator attains
$\varepsilon h^{k+1}/(k+1)!+O(h^{k+2})$, matching Theorem~\ref{thm:lecam} in exponent; on flat
translation submodels the constant is in addition exact. On general $\PP$ the same leading term is
a \emph{formal} Otto-calculus expansion (Lemma~\ref{lem:curv}); for $k\ge 3$ or positively curved
submodels the exponent is expected to persist with a possibly curvature-corrected constant, not
proved here. Nonnegative Alexandrov curvature of $\PP$ is expected to give one-sided control at
finite $h$.
\end{proposition}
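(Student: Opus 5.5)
The plan is to reduce Proposition~\ref{prop:sharp} to the pointwise remainder identity of Lemma~\ref{lem:curv} and then take the supremum over the class $\Ck$. On the flat translation submodel $\mu_t=(\tau_{x(t)})_\#\rho$, Lemma~\ref{lem:embed} identifies the curve with a path $x(t)\in\R^d$. Here $v_t$ is the constant field $\dot x(t)$, and $\nabla_t^k v_t$ is the constant field $x^{(k+1)}(t)$, with $\|\nabla_t^k v_t\|_{\mu_t}=|x^{(k+1)}(t)|$. The order-$k$ extrapolator $P_k$ is then the translate of $\rho$ by the degree-$k$ Taylor polynomial of $x$ at $t_n$. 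The image is totally geodesic and isometric, so
\[
W_2\bigl(\mu_{t_n+h},P_k(t_n+h)\bigr)=\Bigl|x(t_n+h)-\textstyle\sum_{j\le k}x^{(j)}(t_n)h^j/j!\Bigr|.
\]

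\textbf{Flat case.} I would bound the right-hand side by the integral form of Taylor's remainder:
\[
\int_{t_n}^{t_n+h}\frac{(t_n+h-s)^k}{k!}\,|x^{(k+1)}(s)|\,ds\le \frac{\varepsilon h^{k+1}}{(k+1)!}.
\]
This gives the upper bound with no $O(h^{k+2})$ term at all. The extremal curve $x^{(k+1)}\equiv\varepsilon e_1$ on the future window attains equality, exactly as in the construction behind Theorem~\ref{thm:lecam}. So on flat submodels the constant is exact. Together with Theorem~\ref{thm:lecam} restricted to this submodel (whose two-point pair lies inside it), this gives sharpness.

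\textbf{Curved finite-dimensional case.} For a totally geodesic submanifold $S$, such as centered Gaussians with the Bures metric, the Wasserstein covariant derivative along curves in $S$ coincides with the Levi-Civita derivative of $S$, because $S$ is totally geodesic. So the problem is one of Riemannian geometry on a finite-dimensional manifold. Work in normal coordinates at $p=\mu_{t_n}$. There $\Gamma(p)=0$, and the extrapolator $P_k$ (geodesic for $k=1$, and its spline analogue for $k=2$) is a polynomial curve in coordinates up to $O(h^{k+3})$ corrections. The curve's coordinate expansion differs from the covariant jet only through terms involving $\Gamma$ and its derivatives contracted with $v$. The first such term is $(\partial_m\Gamma^i_{jk})(p)v^mv^jv^k$, which vanishes by the symmetrization identity quoted in Lemma~\ref{lem:curv}. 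Hence the coordinate difference equals the covariant Taylor remainder plus $O(h^{k+2})$. The coordinate norm agrees with the Riemannian distance up to $1+O(r^2)$ near $p$, so the flat estimate transfers and gives $\varepsilon h^{k+1}/(k+1)!+O(h^{k+2})$. The exponent then matches Theorem~\ref{thm:lecam}.

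\textbf{Main obstacle.} The hard part is uniformity of the $O(h^{k+2})$ term over $\Ck$. It needs bounds on $\Gamma$ and its derivatives, and on the curve's lower jets, on a fixed normal neighbourhood. I would impose these through the hypotheses of Lemma~\ref{lem:curv}, namely a bounded-geometry chart and a speed bound, and restrict to $h$ below the injectivity radius. The statements about $k\ge3$, positive curvature, and general $\PP$ are only claimed as formal or expected, so they need no proof beyond citing Lemma~\ref{lem:curv}.
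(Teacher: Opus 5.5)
Your proposal is correct and is essentially the paper's own justification, which consists only of Lemma~\ref{lem:curv}. That lemma supplies the Taylor computation on the flat translation submodel via Lemma~\ref{lem:embed} (your integral-remainder bound is the cleaner, uniform-over-$\Ck$ form of its pointwise ``exact'' identity) and the normal-coordinate expansion using $\Gamma(p)=0$ and $(\partial_m\Gamma^i_{jk})(p)v^mv^jv^k=0$, with Theorem~\ref{thm:lecam} giving the matching floor. One minor imprecision: for $k=2$ your claim that $P_k$ is polynomial in normal coordinates up to $O(h^{k+3})$ is not right in general, since curvature terms of the form $R(v,\nabla_t v)v$ enter at order $h^4=h^{k+2}$; your conclusion only uses $O(h^{k+2})$, so nothing breaks.
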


\section{Statistical floor: finite samples}\label{sec:stat}

Replace exact observation by $N$ i.i.d.\ samples from each of $n$ snapshots at
$t_i=t_n-(n-1-i)\Delta$, window $L=(n-1)\Delta$, total $M=Nn$. For a bandwidth $H\in(0,L]$ write
$n_H=\#\{i:t_i\in[t_n-H,t_n]\}$ for the number of in-window observation times and $M_H=N\,n_H$ for the
in-window sample count; for the equispaced design $n_H\asymp1+H/\Delta$, so $M_H\asymp MH/L$ once
$H\ge\Delta$, while $M_H=N$ for $H<\Delta$ (the window then holds only the endpoint snapshot). Let
$w=(1,h,\dots,h^k)^\top$
and $G_{jl}=\sum_{i=0}^{n-1}(t_i-t_n)^{j+l}$ the design Gram matrix.

\begin{theorem}[Statistical floor, separated from the extrapolation floor]\label{thm:stat}
If $\rho$ has Fisher information $I_e=e^\top I(\rho)e>0$ along $e$, then
\[
  \inf_{\hat\nu}\sup_{\mu_\bullet\in\Ck}\E\,W_2(\hat\nu,\mu_{t_n+h})
  \ \gtrsim\
  \max\Big\{\,
   \underbrace{c_d\,M^{-\gamma_d}}_{\textup{(A) shape: $\gamma_d=\min(1/d,1/2)$}}
   ,\
   \underbrace{\tfrac14\big[\tfrac{\varepsilon h^{k+1}}{(k+1)!}
   +(N I_e)^{-1/2}\big(w^\top G^{-1}w\big)^{1/2}\big]}_{\textup{(B) extrapolation + location leverage}}
   \Big\}.
\]
\end{theorem}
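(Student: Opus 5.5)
Since a maximum of lower bounds is a lower bound, I will prove (A) and (B) separately, each on its own subclass of $\Ck$. The minimax risk over $\Ck$ dominates the risk over any subclass, and $\max\{a,b\}\le a+b$ absorbs the constants.

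\emph{Term (A).} Here I restrict to constant curves $\mu_t\equiv\mu$. These have $v_t\equiv0$, so they lie in $\Ck$ for every $k$ and $\varepsilon$. The $n$ snapshots then amount to $M$ i.i.d.\ draws from $\mu$, and forecasting $\mu_{t_n+h}=\mu$ becomes static estimation of $\mu$ in $W_2$. I would invoke the classical minimax lower bound for distribution estimation in $W_2$ over compactly supported measures, namely $c_d M^{-\gamma_d}$ (Niles-Weed--Berthet, Singh--P\'oczos). If I want it self-contained, I would rebuild it from a Fano packing. For $d\ge3$, perturb the uniform density on $[0,1]^d$ by bumps on a grid of side $M^{-1/d}$, keeping the KL divergence $O(1)$ per sample block; pairwise $W_2$ separation is then of order $M^{-1/d}$. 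For $d\le 2$, a two-point location perturbation of size $M^{-1/2}$ gives the parametric rate.

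\emph{Term (B).} I would work in the translation submodel $\mu_t=(\tau_{x(t)})_\#\rho$ with $x(t)=\theta(t)e$ scalar. By Lemma~\ref{lem:embed} and Lemma~\ref{lem:mean}, the loss satisfies $W_2(\hat\nu,\mu_{t_n+h})\ge|\mean(\hat\nu)-\mean(\rho)-\theta(t_n+h)|$, so it suffices to lower-bound scalar estimation of $\theta(t_n+h)$. The membership condition is $|\theta^{(k+1)}|\le\varepsilon$, because on the flat submodel $\nabla_t^k v=\theta^{(k+1)}e$. There are two sub-bounds.

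\textbf{(B1) Extrapolation floor.} I would reuse the Le~Cam pair from Theorem~\ref{thm:lecam}. Take two curves that agree on $[0,t_n]$ and differ by $\pm\varepsilon(s-t_n)_+^{k+1}/(k+1)!$ afterwards, smoothed via Lemma~\ref{lem:moll}. Their observation laws coincide exactly, so any estimator incurs at least half the separation.

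\textbf{(B2) Leverage term.} I would consider polynomial paths $\theta(t)=\sum_{j\le k}\beta_j(t-t_n)^j/j!$. Their $(k+1)$-st derivative is zero, so they lie in $\Ck$ for every $\varepsilon$. The target is then $\tilde w^\top\beta$, a linear functional with weight vector $\tilde w$ equal to $w$ up to the factorial rescaling, which I absorb into the convention for $G$. The $N$ samples at time $t_i$ carry Fisher information $N I_e$ about the location $\theta(t_i)$. Hence the total Fisher information for $\beta$ is $N I_e\,G$.

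I would apply van~Trees with a smooth compactly supported prior on $\beta$ scaled to a large box. As the box grows, the prior-information term vanishes, and the Bayes risk for $\tilde w^\top\beta$ tends to at least $(w^\top G^{-1}w)/(N I_e)$. By Jensen, $\E|\cdot|$ is at least a constant times the square root of this quantity. Alternatively, a two-point Le~Cam argument along the direction $G^{-1}w$, at Hellinger distance $O(1)$, gives the same order directly with an explicit constant.

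\emph{Combining.} To obtain the \emph{sum} in (B), as opposed to a maximum, I would superpose the two constructions. Add the hidden $\pm$ bump from (B1) on top of the polynomial family from (B2). The bump is invisible to the data, so the problem splits: the estimator must pay for an unknown sign and for an unknown polynomial at once. Given the sign, the conditional error is a location-shift problem, so the error is at least $\tfrac12[\text{sep}/2+\text{stat}]$ up to constants. The $\tfrac14$ then follows by symmetrizing over the two signs.

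The main obstacle I expect is this additive combination, together with the Fisher-information bookkeeping. Turning a squared-error van~Trees bound into an $\E|\cdot|$ bound that adds to a deterministic separation requires a careful two-point-plus-shift argument; this is why I would fall back to $\max$ and accept the constant loss. I also have to check that the mollification in Lemma~\ref{lem:moll} keeps the bump supported strictly after $t_n$, so that the observations remain exactly uninformative about its sign.
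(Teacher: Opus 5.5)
\textbf{A gap in your primary route for (B2).} Your (A) matches the paper: restrict to static curves and invoke the $W_2$ density-estimation minimax rate. Your (B1) is the exact-past pair of Theorem~\ref{thm:lecam}. The problem is the van~Trees route for (B2). Van~Trees lower-bounds the Bayes \emph{mean squared} error $\E(\hat\psi-\tilde w^\top\beta)^2$, but Jensen runs the wrong way for your purpose: $\E|X|\le(\E X^2)^{1/2}$. A lower bound on $\E X^2$ therefore gives no lower bound on $\E|X|$, and hence none on $\E W_2$; a rare large error can make the second moment big while the first stays small. Converting would require extra structure, such as Gaussian $\rho$ with an explicit posterior, or an anti-concentration argument. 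So you must commit to your alternative. That is a Le~Cam two-point test with $a\propto G^{-1}w$ on the ellipsoid $a^\top Ga\le(NI_e)^{-1}$. By Cauchy--Schwarz in the $G$-metric it gives separation $|w^\top a|=(NI_e)^{-1/2}(w^\top G^{-1}w)^{1/2}$ at $\KL\le\tfrac12$. This is exactly the paper's leverage argument.

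\textbf{How the paper combines the two terms.} With that fix, your fallback is valid for the $\gtrsim$ statement: prove (B1) and (B2) separately, then use $\max\ge\tfrac12$sum. It only degrades $\tfrac14$ to roughly $\tfrac18$. But the obstacle you anticipate does not arise in the paper's construction. The paper uses a \emph{single} two-point pair, $x_0\equiv0$ versus $x_1=p+b$. The degree-$k$ polynomial $p$ (coefficients $a$) and the invisible bump $b=\frac{\varepsilon}{(k+1)!}(t-t_n)_+^{k+1}$ sit on the \emph{same} alternative, with the sign of $a$ chosen so that $w^\top a\ge0$. Since $b$ vanishes on the window, $\KL(P_0\|P_1)\le\frac{NI_e}{2}a^\top Ga$ (to leading order) sees only $p$. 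At $t_n+h$ the separations simply add: $W_2(\mu^0_{t_n+h},\mu^1_{t_n+h})=|w^\top a|+\frac{\varepsilon h^{k+1}}{(k+1)!}$. Le~Cam with $\TV\le\tfrac12$ then yields exactly $\tfrac14[\,\cdot+\cdot\,]$, with no sign symmetrization or conditional location-shift argument. No mollification is needed either. $x_1^{(k+1)}$ jumps at $t_n$, but $|x_1^{(k+1)}|\le\varepsilon$ almost everywhere, which is all the ess-sup in Definition~\ref{def:class} requires. As you suspected, a symmetric mollifier would actually leak the bump onto the observed time $t_n$.
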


\noindent Here $\gamma_d=\min(1/d,1/2)$: the shape floor (A) is the dimension curse $M^{-1/d}$ for
$d\ge3$ and saturates to the parametric $M^{-1/2}$ in $d\le2$ (with a possible critical logarithmic
correction at $d=2$, whose exact form depends on the estimation class and on whether empirical or
optimized estimators are considered; we state (A) at the power-law level).

\begin{corollary}[Extrapolation leverage]\label{cor:leverage}
For the equispaced design and $h\gtrsim L$, $\sqrt{\frac1{N I_e}}\sqrt{w^\top G^{-1}w}
\asymp c_k(h/L)^{k}M^{-1/2}$: the parametric rate amplified by leverage $(h/L)^k$. The
governing scale is the window $L$, not the spacing $\Delta$.
\end{corollary}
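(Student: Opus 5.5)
The corollary is a computation of $w^\top G^{-1}w$ for the equispaced design. I will rescale time by the window, and then read off the asymptotics in $h/L$.

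\textbf{Rescaling.} Write $t_i-t_n=-L s_i$ with $s_i=(n-1-i)/(n-1)\in[0,1]$. Then
\[
G_{jl}=L^{j+l}\,(-1)^{j+l}\sum_i s_i^{j+l},
\]
so $G=D\,S\,\tilde G\,S\,D$ with $D=\mathrm{diag}(L^0,\dots,L^k)$, $S=\mathrm{diag}((-1)^j)$ and $\tilde G_{jl}=\sum_i s_i^{j+l}$. Consequently
\[
w^\top G^{-1}w=u^\top\tilde G^{-1}u,\qquad u_j=(-h/L)^j .
\]
For the equispaced design, $\tilde G=n\,(\mathcal H+O(1/n))$, where $\mathcal H_{jl}=\int_0^1 s^{j+l}\,ds=1/(j+l+1)$ is the $(k{+}1)\times(k{+}1)$ Hilbert matrix, which is positive definite. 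For $n\ge k+1$ the matrix $\tilde G$ is nonsingular, with eigenvalues comparable to $n$ times those of $\mathcal H$ up to constants depending only on $k$. In the small-$n$ regime ($n$ close to $k+1$) I instead bound $\tilde G$ directly as a fixed nonsingular matrix; since $n$ is then $O(1)$, $1/n\asymp 1$ and the conclusion is the same.

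\textbf{Two-sided bounds.} The previous step gives
\[
u^\top\tilde G^{-1}u\asymp n^{-1}\,u^\top\mathcal H^{-1}u .
\]
Because $\mathcal H^{-1}$ is a fixed positive definite matrix, $u^\top\mathcal H^{-1}u\asymp|u|^2$, with constants depending only on $k$. When $h\gtrsim L$, the top entry dominates: $|u|^2\asymp(h/L)^{2k}$. More precisely, $u^\top\mathcal H^{-1}u\sim(\mathcal H^{-1})_{kk}(h/L)^{2k}$ as $h/L\to\infty$, and for $h/L$ bounded below all entries are at most a constant times the top one. Hence
\[
w^\top G^{-1}w\asymp c_k^2\,(h/L)^{2k}/n .
\]

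\textbf{Conclusion.} Multiplying by $1/(NI_e)$ and using $Nn=M$ gives
\[
\sqrt{\tfrac{1}{NI_e}}\,\sqrt{w^\top G^{-1}w}\asymp c_k\,(h/L)^k\,(I_e M)^{-1/2},
\]
with $I_e$ absorbed into the constant. In the computation, $\Delta$ enters only through $n$, which merges with $N$ into $M$, while the geometric scale of $G$ is $L$. This gives the final remark that the governing scale is the window $L$, not the spacing $\Delta$.

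\textbf{Main obstacle.} The delicate step is making the Riemann-sum approximation of $\tilde G$ by $n\mathcal H$ uniform, given that $\mathcal H$ is badly conditioned. I would avoid perturbation theory. Instead I would argue that $\tilde G/n$ is the Gram matrix of the monomials under the empirical measure on $\{s_i\}$. Its smallest eigenvalue equals
\[
\min_{|c|=1}\tfrac1n\sum_i p_c(s_i)^2,\qquad p_c(s)=\textstyle\sum_j c_j s^j .
\]
A polynomial of degree $k$ with unit coefficient vector cannot be small at a constant fraction of $n\ge k+1$ equispaced nodes. This yields a lower bound depending only on $k$, for every $n\ge k+1$, and hence the uniform comparability used in the two-sided bounds.
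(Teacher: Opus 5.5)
Your argument is correct and is essentially the computation the paper has in mind. The paper states the corollary without a separate proof, but its design-factor calculation in Appendix~\ref{app:covariant} rests on the same rescaling of the design Gram matrix to the Hilbert moment matrix $\widetilde M_k=(1/(i+j+1))$, and your identity $w^\top G^{-1}w=u^\top\tilde G^{-1}u$ with $u_j=(-h/L)^j$, together with $\tilde G\asymp_k n\,\mathcal H$ for $n\ge k+1$, yields both the $(h/L)^kM^{-1/2}$ scaling and the ``window $L$, not spacing $\Delta$'' remark. For the uniform lower eigenvalue bound, a simpler route than the discrete Remez-type claim is the compactness fact you half-state: $\tilde G/n\succ0$ for each $n\ge k+1$ and $\tilde G/n\to\mathcal H\succ0$, so $\inf_{n\ge k+1}\lambda_{\min}(\tilde G/n)>0$ with a constant depending only on $k$.
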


\begin{theorem}[Sharp nonparametric extrapolation rate, location channel]\label{thm:sharp}
In the translation submodel with $\rho$ Gaussian (per-sample location variance
$\sigma_1^2$), $n$ equispaced snapshots over a window $L$, and $M=Nn$ total samples, the
minimax forecast error in the location channel is
\[
  \inf_{\hat\nu}\sup_{\mu_\bullet\in\Ck}\E\,W_2(\hat\nu,\mu_{t_n+h})
  \ \asymp\ \frac{\varepsilon}{(k+1)!}\,(h+H_*)^{k+1},
  \qquad
  H_*=\Big(\frac{\sigma_1^2 L}{M\varepsilon^2}\Big)^{\!\frac{1}{2k+3}},
\]
equivalently $\asymp\max\{\varepsilon h^{k+1},\,
\varepsilon^{\frac1{2k+3}}(\sigma_1^2 L/M)^{\frac{k+1}{2k+3}}\}$. The statistics-dominated
branch ($h\lesssim H_*$) is the classical H\"older-$\beta$ pointwise rate
$M^{-\beta/(2\beta+1)}$ with $\beta=k+1$; the extrapolation-dominated branch
($h\gtrsim H_*$) is the dimension-free floor of Theorem~\ref{thm:lecam}. For the equispaced design the
two-sided rate presumes the optimal width resolves the snapshots, $H_*\ge\Delta$; otherwise the
continuous-design nonparametric branch ceases to apply and the risk enters a resolution-limited regime
controlled by $\Delta$ and $N$, of rate
$\max\{\varepsilon h^{k+1},\,\min(\varepsilon\Delta^{k+1},\,\sigma_1 N^{-1/2})\}$, reducing to the
displayed $\varepsilon\Delta^{k+1}$ scale when discretization dominates the per-snapshot sampling noise
$\sigma_1 N^{-1/2}$.
\end{theorem}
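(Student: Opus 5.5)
The proof splits into an upper bound and a lower bound. Throughout, the translation submodel is used to reduce the problem to a scalar one. By Lemma~\ref{lem:embed}, a curve $\mu_t=(\tau_{x(t)})_\#\rho$ satisfies $W_2(\mu_s,\mu_t)=|x(s)-x(t)|$. For such a curve, membership in $\Ck$ means $\sup_t|x^{(k+1)}(t)|\le\varepsilon$, i.e.\ $x$ lies in a H\"older ball of order $\beta=k+1$. Restricting the forecaster to translates $(\tau_{\hat x})_\#\rho$ loses nothing for the upper bound. For the lower bound, Lemma~\ref{lem:mean} gives $W_2(\hat\nu,\mu_{t_n+h})\ge|\mean(\hat\nu)-x(t_n+h)-\mean(\rho)|$. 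So it suffices to prove the two-sided rate for estimating the point value $x(t_n+h)$ of a $C^{k+1}$ function with $|x^{(k+1)}|\le\varepsilon$. The data are $N$ Gaussian observations with variance $\sigma_1^2$ of $x(t_i)$ at each of $n$ equispaced times. By sufficiency the data reduce to the snapshot sample means, which are $\mathcal N(x(t_i),\sigma_1^2/N)$. Working coordinatewise, the problem becomes the scalar case.

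\textbf{Upper bound.} Take a local polynomial estimator of degree $k$. Fit a degree-$k$ polynomial by least squares to the snapshot means with $t_i\in[t_n-H,t_n]$, and evaluate it at $t_n+h$. By Taylor's theorem, the bias is at most $\varepsilon(h+H)^{k+1}/(k+1)!$ up to a constant depending on $k$. The constant comes from bounding $\sum_i|\ell_i(t_n+h)|\,|t_i-t_n-h|^{k+1}$, where $\ell_i$ are the equivalent-kernel weights. The variance is $(\sigma_1^2/N)\,w_H^\top G_H^{-1}w_H$, with $G_H$ the in-window Gram matrix. For the equispaced design with $H\ge\Delta$, rescaling time by $H$ gives $G_H\asymp n_H\,\mathrm{diag}(H^{2j})$ times a fixed Hilbert-type matrix. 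Hence the standard deviation is of order $\sigma_1(M_H)^{-1/2}(1+h/H)^k$, with $M_H\asymp MH/L$. Setting $u=h+H$, the total risk is then $\lesssim \varepsilon u^{k+1}+\sigma_1(L/(MH))^{1/2}(u/H)^k$. Choose $H=\max(H_*,h)$ up to constants. If $h\lesssim H_*$, the bias--variance balance $\varepsilon H^{k+1}=\sigma_1(L/MH)^{1/2}$ gives exactly $H_*$ and risk $\asymp\varepsilon H_*^{k+1}$. If $h\gtrsim H_*$, taking $H\asymp h$ gives risk $\asymp\varepsilon h^{k+1}$, because the leverage factor stays $O(1)$ and the variance term is smaller than $\varepsilon h^{k+1}$. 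In both cases the risk is $\lesssim\varepsilon(h+H_*)^{k+1}$. The bandwidth cannot exceed $L$, so a remark handles $H_*>L$.

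\textbf{Lower bound.} One branch, $\varepsilon h^{k+1}/(k+1)!$, is already given by Theorem~\ref{thm:lecam}, since having the exact past only helps. For the other branch, use a Le~Cam two-point argument. Let $x_0\equiv0$ and $x_1(t)=\varepsilon H^{k+1}\phi((t-t_n)/H)$, where $\phi$ is a fixed $C^{\infty}$ bump with $\|\phi^{(k+1)}\|_\infty\le1$, supported in $[-1,1]$, and with $\phi(0)\ne0$. (Lemma~\ref{lem:moll} is not needed because $\phi$ is smooth.) At the forecast time the two hypotheses differ by $\varepsilon H^{k+1}|\phi(h/H)|$. This is $\gtrsim\varepsilon H^{k+1}$ when $h\lesssim H$. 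The KL divergence between the two data laws is $\frac{N}{2\sigma_1^2}\sum_i x_1(t_i)^2\lesssim \frac{N}{\sigma_1^2}\varepsilon^2H^{2k+2}\,n_H\asymp \frac{M\varepsilon^2H^{2k+3}}{\sigma_1^2L}$, using $n_H\asymp H/\Delta$ when $H\ge\Delta$. Choosing $H=cH_*$ makes the KL bounded by a constant, which yields $\gtrsim\varepsilon H_*^{k+1}$ in the regime $h\lesssim H_*$. Combining the two branches gives $\max\{\varepsilon h^{k+1},\varepsilon H_*^{k+1}\}\asymp\varepsilon(h+H_*)^{k+1}$. Expanding $\varepsilon H_*^{k+1}$ gives the stated $\varepsilon^{1/(2k+3)}(\sigma_1^2L/M)^{(k+1)/(2k+3)}$.

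\textbf{Resolution-limited regime.} Suppose $H_*<\Delta$. On the lower-bound side, the bump of width $\Delta$ vanishes at every earlier snapshot. Its height is then capped at $\min(\varepsilon\Delta^{k+1},\sigma_1N^{-1/2})$, which keeps the KL $O(1)$ with only the endpoint snapshot informative. On the upper-bound side, use the endpoint-anchored local fit with $H\asymp\Delta$. Alternatively, if sampling noise dominates, use the endpoint-plus-extrapolation estimator built from the single endpoint mean, whose error is $\sigma_1N^{-1/2}$ plus the Taylor bias.

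\textbf{Main obstacle.} The delicate step is keeping the upper-bound constants uniform in $h/H$. One must show that the extrapolated kernel weights satisfy $\sum_i|\ell_i(t_n+h)|(h+H)^{k+1}\lesssim(h+H)^{k+1}$ and that the leverage factor is $\asymp(1+h/H)^k$. This needs a lower bound on the smallest eigenvalue of the rescaled discrete Gram matrix, which holds once $n_H\ge k+1$, i.e.\ once $H\gtrsim k\Delta$. Careful tracking of this condition is what separates the two regimes.
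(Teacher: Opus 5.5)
\textbf{Main regime.} For $\Delta\lesssim H_*\lesssim L$ your argument is correct, and it differs from the paper's mainly in the lower bound.

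The paper uses a single one-sided construction. Its $\phi$ vanishes before $t_n-H$, spends the budget on $[t_n-H,t_n]$ to build a $k$-jet of size $\varepsilon H^{k+1-j}$, and is continued by its degree-$k$ Taylor polynomial. The paper asserts a future separation $\asymp_k\varepsilon(h+H)^{k+1}$, so one Le~Cam step would give both branches. Strictly, that continuation separates by at most $\varepsilon[(h+H)^{k+1}-h^{k+1}]/(k+1)!\asymp_k\varepsilon Hh^k$ when $h\gg H$. So the $\varepsilon h^{k+1}$ branch has to come from Theorem~\ref{thm:lecam} in either route. Your split makes this explicit and is cleaner: the exact-past pair with identical data laws, plus a symmetric width-$cH_*$ bump with $O(1)$ KL, combined by a maximum. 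On the upper side you also go beyond the paper, which treats only $h\lesssim H$.

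Two small corrections:
\begin{itemize}
\item The uniformity you call the ``main obstacle'' is false as stated, since $\sum_i|\ell_i(t_n+h)|$ grows like $(1+h/H)^k$. It is also not needed. With $H\ge h$ that sum is $O(1)$. If $h>L$ forces $H<h$, Taylor-expand about $t_n$ rather than $t_n+h$; this gives bias $\lesssim\varepsilon H(H+h)^k+\varepsilon h^{k+1}\lesssim\varepsilon(h+H)^{k+1}$.
\item $H_*\lesssim L$ is a real hypothesis that no remark removes. As $\varepsilon\to0$ the displayed rate tends to $0$, while even a static location costs $\sigma_1M^{-1/2}$. The paper leaves this implicit too.
\end{itemize}

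\textbf{The genuine gap: resolution-limited upper bound.} The upper-bound half of your resolution-limited paragraph cannot be closed for $k\ge1$, because the claimed rate is false there.

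Since $L/n\asymp\Delta$, the condition $H_*<\Delta$ is equivalent to $\sigma_1N^{-1/2}\lesssim\varepsilon\Delta^{k+1}$. So the claim reads: risk $\asymp\max\{\varepsilon h^{k+1},\sigma_1N^{-1/2}\}$. Your single-endpoint estimator has no control over $x'(t_n),\dots,x^{(k)}(t_n)$, which $\Ck$ leaves unconstrained.

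No estimator, including your width-$\Delta$ fit, can reach the claimed rate. Take $x_1(t)=\pi^{-(k+1)}\varepsilon\Delta^{k+1}\sin\big(\pi(t-t_n)/\Delta\big)$. It has $|x_1^{(k+1)}|\le\varepsilon$ and vanishes at every snapshot, so its data law coincides with that of $x_0\equiv0$. Yet $x_1(t_n+h)\ge2\pi^{-(k+1)}\varepsilon\Delta^kh$ for $h\le\Delta/2$. The triangle-inequality argument of Theorem~\ref{thm:lecam} then gives minimax risk $\gtrsim_k\varepsilon\Delta^kh$. This exceeds the claimed rate by the unbounded factor $(\Delta/h)^k$ as $N\to\infty$ and $h/\Delta\to0$.

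Only the lower-bound half survives, namely your width-$\Delta$ bump of height $\min(\varepsilon\Delta^{k+1},\sigma_1N^{-1/2})$. The true two-sided rate in this regime is of order $\max\{\varepsilon h(h+\Delta)^k,\sigma_1N^{-1/2}\}$, attained by interpolating the last $k+1$ snapshot means. It agrees with the stated rate only at $k=0$. The paper's proof does not treat this regime at all, so you should restrict the two-sided claim to $\Delta\lesssim H_*\lesssim L$, or to $k=0$ in the resolution-limited regime.
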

\begin{remark}
This sharpens Theorem~\ref{thm:stat}(B): the degree-$k$-polynomial construction there gives
only the loose parametric floor $M^{-1/2}(h/L)^k$, whereas spending the $(k{+}1)$-st
derivative budget on a width-$H_*$ bump yields the tight nonparametric rate
$M^{-(k+1)/(2k+3)}$ (Figure~\ref{fig:sharp}). This is the classical H\"older-smoothness
nonparametric rate $M^{-\beta/(2\beta+1)}$, $\beta=k+1$ (Stone 1980; Tsybakov 2009), here
arising as a location-channel forecasting lower bound. The dimension-cursed shape channel of
Theorem~\ref{thm:stat}(A) is folded into the unified rate of Section~\ref{sec:unified}.
\end{remark}

\section{Unified temporal--spatial rate over $\PP$}\label{sec:unified}

\begin{definition}[Regular class]\label{def:reg}
The forecasting problem is \emph{regular} if the curve takes values in densities on a fixed compact
convex $\Omega\subset\R^d$, with densities bounded in $[c/2,2C]$ for fixed $0<c\le C<\infty$, with the
$W_2$-optimal maps from the reference $\mu_0$ to its members sufficiently smooth (along the
reference-centered displacement geodesics used below), the family \emph{star-geodesically closed
around} $\mu_0$
(the displacement geodesic from $\mu_0$ to each member stays in the family), and \emph{locally
transport-rich} around the reference density $\mu_0$: every sufficiently small smooth compactly
supported potential $\rho$ keeps $(\mathrm{id}+\nabla\rho)_\#\mu_0$ and its displacement
interpolation in the class. The stronger pairwise/chart regularity used \emph{only} by the
conditional upper bound of Appendix~\ref{app:covariant} --- smooth Brenier maps from the barycenter to
members --- is not part of this definition and is stated separately as Assumptions~(C),(S). This makes the tangent calculus of Lemma~\ref{lem:reach} applicable and
the packing of Appendix~\ref{app:reduction} admissible (Ambrosio--Gigli--Savar\'e; Gigli). A
\emph{single} hard reference suffices for the lower bound, and we take $\mu_0\equiv1$ to be the
\emph{uniform} (constant) density on $\Omega=[0,1]^d$. We stress that the regular class is a
\emph{local transport-rich neighborhood} of $\mu_0$ within the two-sided-bounded family $[c/2,2C]$,
\emph{not} the whole bounded-density class: not every bounded density has smooth optimal maps or a
star-geodesically closed neighborhood, and we claim neither. The perturbations of
Appendix~\ref{app:reduction} stay within $[c/2,2C]$, keep the reference-centered optimal maps smooth, and keep the
displacement interpolations in the class; the spatial constancy of $\mu_0$ --- not merely a two-sided
bound $c\le\mu_0\le C$ --- is what the separation estimates of Appendix~\ref{app:reduction} use, and
no regularity of $\mu_0$ beyond constancy is invoked.
\end{definition}

\begin{proposition}[A nontrivial reference-star regular class]\label{prop:regexists}
Fix $\mu_0\equiv1$ on $\Omega=[0,1]^d$. There is $r_0>0$ such that the transport neighborhood
\[
   \mathcal F_{r_0}=\big\{(\mathrm{id}+\nabla\rho)_\#\mu_0:\ \rho\in C_c^\infty(\Omega),\
   \|\nabla^2\rho\|_\infty\le r_0\big\}
\]
together with the $W_2$-displacement geodesics from $\mu_0$ to its members satisfies
Definition~\ref{def:reg}: every member has density in $[c/2,2C]$, the reference-to-member optimal maps
$\mathrm{id}+\nabla\rho$ are smooth diffeomorphisms, the family is star-geodesically closed around
$\mu_0$, and it is locally transport-rich there. In particular the packing of
Appendix~\ref{app:reduction} lies in $\mathcal F_{r_0}$, so Definition~\ref{def:reg} is nonvacuous and
the lower bound is not an artifact of an empty class.
\end{proposition}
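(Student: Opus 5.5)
The plan is to verify the four clauses of Definition~\ref{def:reg} directly for $\mathcal F_{r_0}$, using only Brenier's theorem, the change-of-variables formula, and elementary convexity. \textbf{Step 1 (optimal maps).} For $\rho\in C_c^\infty(\Omega)$ with $\|\nabla^2\rho\|_\infty\le r_0<1$, the function $\varphi=\tfrac12|x|^2+\rho$ is uniformly convex, since $\nabla^2\varphi\ge(1-r_0)I$. Hence $T=\mathrm{id}+\nabla\rho=\nabla\varphi$ is the gradient of a convex function, and by Brenier's theorem it is the unique $W_2$-optimal map from $\mu_0$ to $T_\#\mu_0$. Since $\rho$ has compact support in $\Omega$, $T$ equals the identity near $\partial\Omega$. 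As $\nabla T\ge(1-r_0)I$, $T$ is injective and a local diffeomorphism, so it is a smooth diffeomorphism of $\Omega$ onto itself; this uses $\Omega$ convex together with a degree/properness argument on the compact set. Thus every member is supported on $\Omega$.

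\textbf{Step 2 (density bounds).} By the Monge--Amp\`ere change of variables, the density of $T_\#\mu_0$ at $T(x)$ is $1/\det(I+\nabla^2\rho(x))$. This lies in $[(1+r_0)^{-d},(1-r_0)^{-d}]$. Choosing $r_0$ small enough that this interval sits inside $[c/2,2C]$ works for any fixed $c\le1\le C$, for instance $c=C=1$. Smoothness of the density follows from the smoothness of $\rho$.

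\textbf{Step 3 (star-geodesic closure and transport-richness).} The displacement geodesic from $\mu_0$ to $T_\#\mu_0$ is $\mu_s=(\mathrm{id}+s\nabla\rho)_\#\mu_0$ for $s\in[0,1]$. This holds because $\mathrm{id}+s\nabla\rho=\nabla(\tfrac12|x|^2+s\rho)$ is again a convex gradient and hence optimal. Moreover $s\rho\in C_c^\infty(\Omega)$ with $\|\nabla^2(s\rho)\|_\infty\le sr_0\le r_0$, so every intermediate measure is itself a member of $\mathcal F_{r_0}$; this gives star-geodesic closure. Local transport-richness is then tautological: any smooth compactly supported potential that is small in $C^2$, i.e.\ with $\|\nabla^2\rho\|_\infty\le r_0$, produces a member, together with its interpolation. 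Smoothness of the reference-centered maps along these geodesics was already established in Step 1.

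\textbf{Step 4 (packing membership).} The packing of Appendix~\ref{app:reduction} should consist of potentials of the form $\rho_\omega=\delta\sum_j\omega_j\,\delta_0^{2}\,\psi((x-x_j)/\delta_0)$, with disjointly supported bumps on a grid of cells of width $\delta_0$. Their Hessians are bounded by $\delta\|\nabla^2\psi\|_\infty$ independently of $\delta_0$, so taking the amplitude $\delta\le r_0/\|\nabla^2\psi\|_\infty$ places them in $\mathcal F_{r_0}$. I expect this to be the main obstacle. One must check against the appendix's actual scaling that the amplitude needed for the separation and Fano estimates is compatible with a uniform-in-$M$ Hessian bound. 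If the appendix uses amplitude scaling like $\delta_0^{1+a}$ rather than $\delta_0^2$ for the potential, the Hessian bound would fail, and the bump amplitude would have to be capped by $r_0$, possibly at the cost of constants. I would first match the appendix's normalization, in which the displacement is $\nabla\rho=O(\delta\delta_0)$ and the $W_2$ separation is of order $\delta\delta_0$, and confirm that it is of this $C^2$-bounded type.
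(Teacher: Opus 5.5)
Your Steps 1--3 are essentially the paper's own argument, written out in more detail: uniform convexity of $\tfrac12|x|^2+\theta\rho$ gives a Brenier diffeomorphism equal to the identity near $\partial\Omega$; the pushforward density is $1/\det(I+\theta\nabla^2\rho)$; and $\theta\rho$ obeys the same Hessian bound, which gives star-geodesic closure and local transport-richness at once. Your Step~4 worry dissolves. The appendix's packing $\rho_\omega=\frac{a}{m}\sum_c\omega_c\Phi(m(x-x_c))$ is exactly your normalization with $\delta=am$ and $\delta_0=1/m$, so $\|\nabla^2\rho_\omega\|_\infty\le am\|\nabla^2\Phi\|_\infty\le c_0\|\nabla^2\Phi\|_\infty$, uniformly in $M$, by the monotonicity constraint $am\le c_0$ that the appendix imposes throughout.
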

\begin{proof}
This is Lemma~\ref{lem:bump} applied to the bump family. For $\|\nabla^2\rho\|_\infty\le r_0$ small,
the potential $\tfrac12|x|^2+\theta\rho$ is uniformly convex for every $\theta\in[0,1]$, so
$\mathrm{id}+\theta\nabla\rho$ is a Brenier diffeomorphism and the pushforward density
$1/\det(I+\theta\nabla^2\rho)$ lies in $[c/2,2C]$. Since $\rho\in C_c^\infty(\Omega)$ is supported in
the interior, $\mathrm{id}+\theta\nabla\rho$ equals the identity near $\partial\Omega$, so it maps
$\Omega$ diffeomorphically onto itself. The displacement geodesic from $\mu_0$ to a member
$(\mathrm{id}+\nabla\rho)_\#\mu_0$ is exactly $\theta\mapsto(\mathrm{id}+\theta\nabla\rho)_\#\mu_0$,
and since $\theta\rho$ obeys the same Hessian bound it stays in $\mathcal F_{r_0}$ for all
$\theta\in[0,1]$; this is simultaneously the star-geodesic closure around $\mu_0$ and the local
transport-richness required by Definition~\ref{def:reg}. We make no claim about the geodesic between
two arbitrary members: the Brenier map $(\mathrm{id}+\nabla\rho_2)\circ(\mathrm{id}+\nabla\rho_1)^{-1}$
between them is in general not a gradient perturbation of $\mu_0$, and the construction does not need
it --- Lemma~\ref{lem:reach} and Appendix~\ref{app:reduction} use only reference-to-endpoint
geodesics. Constancy of $\mu_0$ is used only by the separation estimates of
Appendix~\ref{app:reduction}, not here.
\end{proof}

\noindent The reduction needs a \emph{local} statistical-richness bound: for the reference $\mu_0$,
constants $c,r_0>0$ with
$\inf_{\hat\nu}\sup_{\nu:\,W_2(\nu,\mu_0)\le r}\E_\nu\,W_2(\hat\nu,\nu)\ge c\,\min(r,m^{-\gamma_d})$
for every $0<r\le r_0$ and sample size $m$, $\gamma_d=\min(1/d,1/2)$. A \emph{global} minimax rate
need not transfer to every shrinking ball, so rather than assume this we \emph{derive} it from
regularity --- it is exactly Proposition~\ref{prop:reduction}. Theorem~\ref{thm:unified} is then a
reduction to this local lower bound. This local, moving-target bound is distinct from the static shape
floor of Theorem~\ref{thm:stat}(A): there a fixed separated subfamily is observed directly with all
$M$ samples, whereas here the moving target confines the usable temporal pool to the in-window
snapshots, coupling temporal smoothness and spatial estimation into a single rate.

\begin{lemma}[Reachability under the smoothness budget]\label{lem:reach}
Assume the geometric regularity of Definition~\ref{def:reg}, and that the constant-speed
$W_2$-geodesic from $\mu_0$ to $\nu$ is regular with a well-defined velocity field $V$ along which
the covariant-derivative chain rule holds. Then for every $\nu$ with
$\delta:=W_2(\mu_0,\nu)\le \varepsilon H^{k+1}/(k+1)!$ there is a curve in $\Ck$ that equals
$\mu_0$ for $t\le t_n-H$ and reaches $\mu_{t_n}=\nu$.
\end{lemma}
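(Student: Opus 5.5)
The plan is to reparametrize the constant-speed geodesic in time. Let $\gamma:[0,1]\to\PP$ be the constant-speed $W_2$-geodesic from $\mu_0$ to $\nu$; by star-geodesic closure (Definition~\ref{def:reg}) it stays in the regular class, and by hypothesis it carries a regular velocity field $V_s$ with $\|V_s\|_{\gamma_s}=\delta$ and $\nabla_s V_s=0$. Choose a scalar profile $\phi:\R\to[0,1]$ with $\phi(t)=0$ for $t\le t_n-H$ and $\phi(t_n)=1$. Define $\mu_t=\gamma_{\phi(t)}$, frozen at $\mu_0$ before $t_n-H$ and, for definiteness, at $\nu$ after $t_n$ (or continued polynomially; only the past window matters for the reduction). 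Then $\mu_t=\mu_0$ for $t\le t_n-H$ and $\mu_{t_n}=\nu$, so it remains to check membership in $\Ck$.

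Compute the velocity and its covariant derivatives by the chain rule along the reparametrization. The velocity is $v_t=\phi'(t)V_{\phi(t)}$. Since $V$ is parallel along the geodesic, the covariant-derivative chain rule gives $\nabla_t v_t=\phi''(t)V_{\phi(t)}+\phi'(t)^2(\nabla_sV)_{\phi(t)}=\phi''(t)V_{\phi(t)}$. Inductively, $\nabla_t^k v_t=\phi^{(k+1)}(t)V_{\phi(t)}$, so
\[
\|\nabla_t^k v_t\|_{\mu_t}=|\phi^{(k+1)}(t)|\,\delta .
\]
The geometric question therefore collapses to a scalar one: we need $\phi$ rising from $0$ to $1$ over an interval of length $H$ with $\sup|\phi^{(k+1)}|\le\varepsilon/\delta$.

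For the scalar step, take $\phi(t)=(t-(t_n-H))_+^{k+1}/H^{k+1}$ on $[t_n-H,t_n]$. Its first $k$ derivatives vanish at $t_n-H$, so $\phi$ glues $C^k$ to the constant $0$, and $\phi^{(k+1)}\equiv (k+1)!/H^{k+1}$ there, with a jump only in the $(k{+}1)$-st derivative. This is allowed, since the class $\Ck$ uses an essential supremum on $\nabla^k v$, which corresponds to $\phi^{(k+1)}$. We get $\operatorname{ess\,sup}\|\nabla_t^kv_t\|=\delta(k+1)!/H^{k+1}\le\varepsilon$, exactly by the hypothesis on $\delta$. Monotonicity of $\phi$ keeps $\phi(t)\in[0,1]$, so the curve stays on the geodesic segment. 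Beyond $t_n$ we set $\mu_t=\nu$; this keeps $\phi\in[0,1]$, though freezing introduces a jump in $\phi'$. If the class demands global $C^k$ regularity, I will instead extend $\phi$ by its Taylor polynomial of degree $k$ at $t_n$ clipped via a smooth cutoff, using the mollification of Lemma~\ref{lem:moll} at cost $1+O(\eta/H)$. Alternatively, I can note that only $t\le t_n$ is constrained by the observation model.

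The main obstacle is the rigorous justification of $\nabla_t^k v_t=\phi^{(k+1)}V_{\phi}$ in Gigli's calculus: that the geodesic velocity is parallel and that the chain rule for covariant derivatives holds under reparametrization. This is precisely what the lemma's hypothesis grants. On the concrete class of Proposition~\ref{prop:regexists} I would verify it directly: there $\gamma_s=(\mathrm{id}+s\nabla\rho)_\#\mu_0$, and in Lagrangian coordinates the trajectories $x+\phi(t)\nabla\rho(x)$ have acceleration $\phi''\nabla\rho$ and $(k{+}1)$-st derivative $\phi^{(k+1)}\nabla\rho$. Because the maps $\mathrm{id}+\theta\nabla\rho$ are diffeomorphisms, the covariant derivative is the material derivative of the Lagrangian velocity pulled forward. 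This yields the same norm $|\phi^{(k+1)}|\,\|\nabla\rho\|_{L^2(\mu_0)}=|\phi^{(k+1)}|\,\delta$.
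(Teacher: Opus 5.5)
Your proof is correct and is essentially the paper's: you reparametrize the unit-time geodesic by the power profile $\big((t-t_n+H)/H\big)^{k+1}$ and use $\nabla_sV=0$ with the chain rule to get $\nabla_t^kv_t=\phi^{(k+1)}V$. This gives $\|\nabla_t^kv_t\|=(k+1)!\,\delta/H^{k+1}\le\varepsilon$, and the jump of the $(k{+}1)$-st derivative at $t_n-H$ is absorbed by the ess-sup.

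Like the paper, you only need the curve on $t\le t_n$. Your caveat about freezing afterwards is well founded: for $k\ge1$ freezing creates a velocity jump, and at the exact budget the power profile is the only admissible schedule, so the curve cannot arrive at rest. This is why Appendix~\ref{app:reduction} switches to a both-ends-flat smoothstep with the larger constant $C_k=\|q_k^{(k+1)}\|_\infty$ when it needs to freeze at $\nu_\omega$.
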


\begin{theorem}[Unified lower bound via temporal-to-spatial reduction]\label{thm:unified}
For a regular problem (Definition~\ref{def:reg}) with $\|\nabla_t^k v_t\|\le\varepsilon$,
observed through $n$ snapshots of $N$ samples each ($M=Nn$ total), the minimax prediction risk
satisfies, for every $k\ge0$,
\[
   \inf_{\hat\nu}\sup_{\mu_\bullet\in\Ck}\E\,W_2(\hat\nu,\mu_{t_n+h})\ \gtrsim\
   \max\!\Big\{\tfrac{\varepsilon h^{k+1}}{(k+1)!}\,,\
   \sup_{0<H\le L}\min\!\Big(c_k\,\varepsilon H^{k+1},\,c\,(M^{\mathrm{eff}}_{H,k})^{-\gamma_d}\Big)\Big\},
\]
where $\gamma_d=\min(1/d,1/2)$ and
$M^{\mathrm{eff}}_{H,k}=N\sum_{i:\,t_i\in[t_n-H,t_n]}q_k\big(\tfrac{t_i-t_n+H}{H}\big)^2$ is the
design-weighted in-window information ($q_k$ the smoothstep schedule of the construction,
Appendix~\ref{app:reduction}; $0\le q_k\le1$). The first term is the exact-past floor of
Theorem~\ref{thm:lecam}, the second the temporal-to-spatial reduction (Lemma~\ref{lem:reach}, proved
in Appendix~\ref{app:reduction}). The bound is \emph{design-dependent}: $M^{\mathrm{eff}}_{H,k}$ is the
information actually carried by the snapshots inside the bandwidth, so the supremum is over feasible
windows with no continuity of $H$ assumed. Since $q_k\le1$, $M^{\mathrm{eff}}_{H,k}\le M_H:=Nn_H$, so
the cruder bound with $M_H$ in place of $M^{\mathrm{eff}}_{H,k}$ also holds.
\end{theorem}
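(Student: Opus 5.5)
The two terms of the maximum are proved separately. The first term, $\varepsilon h^{k+1}/(k+1)!$, is inherited directly: exact observation of the past is at least as informative as noisy snapshots, since samples can be simulated from the exact curve. So Theorem~\ref{thm:lecam}, realized inside the regular class by a small translation-type perturbation of $\mu_0$ (mollified via Lemma~\ref{lem:moll}), lower-bounds the snapshot risk. All the work is in the second term. We fix a feasible bandwidth $H\in(0,L]$ and show the risk is at least $\min(c_k\varepsilon H^{k+1},c\,(M^{\mathrm{eff}}_{H,k})^{-\gamma_d})$; taking the supremum over $H$ then needs no continuity.

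\textbf{Construction of the hypotheses.} Set $r=c_k\varepsilon H^{k+1}$, with $c_k\le 1/(k+1)!$ small. Take the bump packing of Appendix~\ref{app:reduction} (Proposition~\ref{prop:reduction}): potentials $\rho_\omega=\sum_j\omega_j\,a\,\psi((x-x_j)/\ell)$ on a grid of $\asymp\ell^{-d}$ cubes, with $\omega$ ranging over a Varshamov--Gilbert subset of $\{0,1\}^{\ell^{-d}}$. The endpoints are $\nu_\omega=(\mathrm{id}+\nabla\rho_\omega)_\#\mu_0$, which lie in $\mathcal F_{r_0}$ by Proposition~\ref{prop:regexists}. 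Each $\nu_\omega$ is turned into a curve by Lemma~\ref{lem:reach}: $\mu^\omega_t=(\mathrm{id}+q_k(\tfrac{t-t_n+H}{H})\nabla\rho_\omega)_\#\mu_0$. This curve equals $\mu_0$ before $t_n-H$. The smoothstep $q_k$ is flat to order $k$ at $0$ and satisfies $q_k(1)=1$, so the $k$-th covariant derivative is of size $\|\nabla\rho_\omega\|_{L^2}H^{-(k+1)}\lesssim\varepsilon$ whenever $W_2(\mu_0,\nu_\omega)\le r$. After $t_n$ the curve is held constant (or continued geodesically), so $\mu^\omega_{t_n+h}$ stays $\gtrsim$-separated as the endpoints are. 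Because $\mu_0$ is constant, the separation estimate gives $W_2(\nu_\omega,\nu_{\omega'})\gtrsim a\ell\cdot(\text{Hamming fraction})^{1/2}$.

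\textbf{Information of the full experiment and balancing.} By independence across snapshots,
\[
\KL(P_\omega\|P_{\omega'})=N\sum_i \KL(\mu^\omega_{t_i}\|\mu^{\omega'}_{t_i}).
\]
Out-of-window terms vanish because both curves equal $\mu_0$ there. In-window, the densities are bounded in $[c/2,2C]$, so KL is at most a constant times $\chi^2$, which is of order $q_k(s_i)^2\|\nabla\!\cdot\!\nabla(\rho_\omega-\rho_{\omega'})\|_{L^2}^2\lesssim q_k(s_i)^2 a^2\ell^{-2}\cdot\#\{\text{differing bumps}\}\,\ell^d$. Summing over $i$ produces exactly $M^{\mathrm{eff}}_{H,k}$. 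Fano then requires $M^{\mathrm{eff}}_{H,k}a^2\ell^{-2}\lesssim 1$, i.e.\ $a\asymp\ell\,(M^{\mathrm{eff}})^{-1/2}$. The Hessian constraint $a\ell^{-2}\le r_0$ holds once $\ell\gtrsim (M^{\mathrm{eff}})^{-1/2}$ (up to the $d\le2$ saturation). The resulting separation $a\ell\asymp\ell^2(M^{\mathrm{eff}})^{-1/2}$ is then maximized subject to the reachability radius $r$. Optimizing $\ell$ gives the static rate $(M^{\mathrm{eff}})^{-1/d}$ for $d\ge3$, and the parametric $-1/2$ for $d\le2$ via a single-bump two-point Le~Cam argument. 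Capping the amplitude at $r$ whenever the unconstrained optimum exceeds the reachable ball yields the $\min(r,\cdot)$. Fano converts this into $\E W_2\gtrsim\min(c_k\varepsilon H^{k+1},c(M^{\mathrm{eff}}_{H,k})^{-\gamma_d})$. Finally, $q_k\le1$ gives $M^{\mathrm{eff}}_{H,k}\le M_H$.

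\textbf{Main obstacle.} The hardest step is the admissibility check in the reachability construction: showing that $\|\nabla_t^k v_t\|_{\mu_t}\le\varepsilon$ holds for the pushforward curve in Gigli's covariant calculus, not merely for the flat parametrization. I would first verify that $t\mapsto(\mathrm{id}+q(t)\nabla\rho)_\#\mu_0$ is a reparametrized displacement geodesic. Its velocity is then $\dot q\,V$ with $\nabla_t V=0$ along the geodesic, so the chain rule gives $\nabla_t^k v=q^{(k+1)}V$ exactly. Since $\|V\|=W_2(\mu_0,\nu)$, the bound reduces to $\sup|q_k^{(k+1)}|\,\delta H^{-(k+1)}\le\varepsilon$, which fixes $c_k$. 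The second delicate point is keeping the KL bound uniform: the $\chi^2$ linearization must be controlled by the Hessian bound $r_0$ throughout the interpolation.
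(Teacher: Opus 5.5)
Your overall architecture matches the paper's, but the balancing step does not produce the claimed rate. The shared architecture is: freeze the path at $\mu_0$ before $t_n-H$ and at $\nu_\omega$ after $t_n$; embed the transport bump packing along a smoothstep-reparametrized displacement geodesic from the uniform reference; sum the per-snapshot KL over the full experiment to get $M^{\mathrm{eff}}_{H,k}$; apply Fano; take $\sup_H$. The balancing step is the only place where the exponent $\gamma_d$ is actually produced, and as written it does not yield $(M^{\mathrm{eff}}_{H,k})^{-1/d}$ for $d\ge3$. There are two problems.

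First, Fano's condition is $D_{\mathrm{tot}}+\log2\le\tfrac12\log|\mathcal W|$, with $\log|\mathcal W|\asymp\ell^{-d}$ for the Varshamov--Gilbert set. It is not $D_{\mathrm{tot}}\lesssim1$. Comparing the total KL with a constant is the two-point Le~Cam condition, which can only give parametric rates. The packing entropy $\ell^{-d}$ is exactly what creates the dimension curse.

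Second, your scalings are mutually inconsistent. For $\rho_\omega=\sum_j\omega_j a\psi((x-x_j)/\ell)$, the displacement has amplitude $a/\ell$ and the Hessian has amplitude $a/\ell^{2}$; the latter is what you use in the convexity constraint. So the separation is $\asymp a/\ell$, not $a\ell$. Likewise the per-cell $\chi^2$ contribution is $\asymp q_k^2a^2\ell^{-4}\ell^{d}$, not $q_k^2a^2\ell^{-2}\ell^{d}$. Taking your formulas at face value, the separation $\ell^{2}(M^{\mathrm{eff}}_{H,k})^{-1/2}$ is increasing in $\ell$, so the optimum is $\ell\asymp1$. That gives $(M^{\mathrm{eff}}_{H,k})^{-1/2}$ in every dimension. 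Your claim that optimizing $\ell$ gives $(M^{\mathrm{eff}}_{H,k})^{-1/d}$ for $d\ge3$ is asserted, not derived.

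The repair is the paper's Proposition~\ref{prop:reduction}. Normalize by the displacement amplitude, $\rho_\omega=\frac{a}{m}\sum_c\omega_c\Phi(m(x-x_c))$ with $m=1/\ell$. Then:
the Hessian is $\asymp am$;
the separation on $\mathcal W$ is $\gtrsim a$ (Lemma~\ref{lem:sep});
the per-snapshot KL is $\lesssim q_k(s_i)^2a^2m^2\,d_H/m^d$ (Lemma~\ref{lem:kl}).
Hence $D_{\mathrm{tot}}\lesssim_k M^{\mathrm{eff}}_{H,k}a^2m^2$, and Fano allows $a^2\lesssim m^{d-2}/M^{\mathrm{eff}}_{H,k}$.

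The convexity constraint $am\le c_0$ is then not a side condition but the second binding constraint. For $d\ge3$ the Fano cap increases in $m$ and the convexity cap decreases. They cross at $m\asymp(M^{\mathrm{eff}}_{H,k})^{1/d}$, giving $a\asymp(M^{\mathrm{eff}}_{H,k})^{-1/d}$. For $d\le2$ both caps are nonincreasing in $m$, so $m\asymp1$ and $a\asymp(M^{\mathrm{eff}}_{H,k})^{-1/2}$. Capping $a\lesssim r_H$ gives the $\min$.

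The $\chi^2$ linearization you flag as delicate is not needed. Each $\mathrm{id}+q_k(s)\nabla\rho_\omega$ maps every cell onto itself, and $\mu_0\equiv1$, so the pushforward density on a cell depends only on $\omega_c$. The density difference therefore vanishes on agreeing cells and is $\lesssim q_k(s)\,am$ pointwise on differing ones.
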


\begin{corollary}[Closed-form rate; dense versus resolution-limited regimes]\label{cor:denserate}
For the equispaced design ($n_H\asymp1+H/\Delta$, and $M^{\mathrm{eff}}_{H,k}\asymp_k M_H\asymp MH/L$
for $H\ge\Delta$ by the Riemann sum $\tfrac1{n_H}\sum_i q_k(s_i)^2\to c_{q,k}\in(0,1)$), let
$H_\#=\big((L/M)^{\gamma_d}/\varepsilon\big)^{1/(k+1+\gamma_d)}$ be the unconstrained maximizer of the
inner $\min$. \emph{(i) Dense regime.} If $H_\#\ge\Delta$ --- the statistically optimal bandwidth
resolves at least one snapshot beyond the endpoint --- then, for fixed $k$,
\[
   \inf_{\hat\nu}\sup_{\mu_\bullet\in\Ck}\E\,W_2(\hat\nu,\mu_{t_n+h})\ \gtrsim_k\
   \varepsilon\,(h+H_\#)^{k+1}
   \qquad(\text{using }\max\{a^{k+1},b^{k+1}\}\asymp_k(a+b)^{k+1}),
\]
whose statistics-dominated branch ($h\lesssim H_\#$) has $M$-exponent $\gamma_d(k+1)/(k+1+\gamma_d)$:
the location rate $M^{-(k+1)/(2k+3)}$ of Theorem~\ref{thm:sharp} for $d\le2$, and
$M^{-(k+1)/(d(k+1)+1)}$ for $d\ge3$ (stated at the power-law level at $d=2$; the critical logarithmic
factor is not optimized by the single-scale packing of Appendix~\ref{app:reduction}).
\emph{(ii) Resolution-limited regime.} If instead $H_\#<\Delta$, the feasible windows obey $H\ge\Delta$
(or contain only the endpoint, where $M^{\mathrm{eff}}_{H,k}\asymp N$), so the inner $\sup$ is of the
same order as its value at the smallest resolved window $H\in[\Delta,2\Delta)$, namely
$\gtrsim_k\max\{\varepsilon h^{k+1},\ \min(c_k\,\varepsilon\Delta^{k+1},\,c\,N^{-\gamma_d})\}$ ---
governed by the temporal resolution $\Delta$ and the per-snapshot count $N$, not by $M$. The
closed-form exponent in (i) is thus the dense-temporal-design rate; the matching upper bounds below
operate in the same regime.
\end{corollary}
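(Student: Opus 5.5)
The plan is to derive the corollary directly from Theorem~\ref{thm:unified}, taking that theorem's design-dependent bound as given. The work is then two things: evaluate $M^{\mathrm{eff}}_{H,k}$ for the equispaced design, and optimize the inner $\sup_H\min(\cdot,\cdot)$.

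\textbf{Step 1: the effective sample size.} Take $H\ge\Delta$. The in-window times $t_i\in[t_n-H,t_n]$ map to arguments $s_i=(t_i-t_n+H)/H\in[0,1]$. These form a grid of mesh $\Delta/H\le1$, and it contains the endpoint $s=1$.
\begin{itemize}
\item For the upper bound I use $q_k\le1$, which gives $M^{\mathrm{eff}}_{H,k}\le Nn_H\asymp N(1+H/\Delta)\asymp NH/\Delta\asymp MH/L$. Only this direction of $\asymp$ is needed, since a smaller effective sample size makes the lower bound larger.
\item For completeness, the reverse direction holds because the smoothstep $q_k$ is continuous, nondecreasing, and equals $1$ near $s=1$. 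So a fixed fraction of the in-window grid points have $q_k\ge1/2$, and $\sum_i q_k(s_i)^2\gtrsim_k n_H$ uniformly in $H\ge\Delta$. This is the Riemann-sum claim, made quantitative without any continuity in $H$.
\end{itemize}

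\textbf{Step 2: the dense regime (i).} Substitute $M^{\mathrm{eff}}\lesssim MH/L$ into the inner term. This gives $\min\big(c_k\varepsilon H^{k+1},\,c'(MH/L)^{-\gamma_d}\big)$. The first argument is increasing in $H$ and the second is decreasing. Equating them gives $\varepsilon H^{k+1+\gamma_d}=(L/M)^{\gamma_d}$, that is, $H=H_\#$ up to constants depending on $k$.

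The assumption $H_\#\ge\Delta$ only makes $H_\#$ feasible from below. I also need $H_\#\le L$, which I either take implicitly (otherwise the $M$-rate regime is vacuous) or handle by using $H=\min(H_\#,L)$, losing only constants. The inner sup is then $\gtrsim_k\varepsilon H_\#^{k+1}$.

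Combining this with the first term $\varepsilon h^{k+1}/(k+1)!$ via $\max\{a^{k+1},b^{k+1}\}\asymp_k(a+b)^{k+1}$ gives $\varepsilon(h+H_\#)^{k+1}$. For the exponent, $\varepsilon H_\#^{k+1}=\varepsilon^{\gamma_d/(k+1+\gamma_d)}(L/M)^{\gamma_d(k+1)/(k+1+\gamma_d)}$. Plugging in $\gamma_d=1/2$ yields $(k+1)/(2k+3)$, and plugging in $\gamma_d=1/d$ yields $(k+1)/(d(k+1)+1)$.

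\textbf{Step 3: the resolution-limited regime (ii).} Here $H_\#<\Delta$, and I split the windows into two cases.
\begin{itemize}
\item If $H<\Delta$, only the endpoint lies in the window, so $M^{\mathrm{eff}}=Nq_k(1)^2\asymp N$. The inner term is then $\min(c_k\varepsilon H^{k+1},cN^{-\gamma_d})$, which is largest as $H\uparrow\Delta$.
\item If $H\ge\Delta$, the $\varepsilon$-branch already dominates at $H=\Delta$, because $\Delta>H_\#$. The min therefore equals the statistical branch $(MH/L)^{-\gamma_d}$, which decreases in $H$. So the sup over $H\ge\Delta$ is attained, up to constants, at $H\in[\Delta,2\Delta)$, where $M_H\asymp N$.
\end{itemize}
Both cases give $\min(c_k\varepsilon\Delta^{k+1},cN^{-\gamma_d})$ up to constants. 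Taking the max with the exact-past floor completes (ii).

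The main obstacle is Step 1's uniformity: making $M^{\mathrm{eff}}_{H,k}\asymp_k MH/L$ hold for all $H\ge\Delta$ with $k$-dependent constants, including small $H/\Delta$ where the Riemann-sum limit is not yet accurate. I would handle it by the crude count above rather than by an actual limit. Only the upper inequality, $q_k\le1$, is needed for the lower bound itself.
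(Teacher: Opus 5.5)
Your proposal is correct and follows the paper's own argument. You replace $M^{\mathrm{eff}}_{H,k}$ by $MH/L$, balance the increasing and decreasing branches at $H_\#$, and merge with the exact-past floor via $\max\{a^{k+1},b^{k+1}\}\asymp_k(a+b)^{k+1}$. In case (ii) you evaluate at $H\asymp\Delta$, where $M^{\mathrm{eff}}_{H,k}\asymp N$. Your remark that only $q_k\le1$ is needed for the lower bound in (i) is a worthwhile sharpening of the paper's two-sided $\asymp$.

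Two small corrections:
\begin{itemize}
\item The incomplete-beta $q_k$ is flat but not identically $1$ near $s=1$. Continuity and $q_k(1)=1$ still give $q_k\ge\tfrac12$ on some $[s_0(k),1]$, so your counting argument survives.
\item Taking $H=\min(H_\#,L)$ does \emph{not} lose only constants when $H_\#\gg L$, because the inner min at $H=L$ is then $\asymp\varepsilon L^{k+1}$, which is much smaller than $\varepsilon H_\#^{k+1}$. Instead, observe that $H_\#>L$ forces $\varepsilon H_\#^{k+1}=(L/(MH_\#))^{\gamma_d}<M^{-\gamma_d}$. The static shape floor of Theorem~\ref{thm:stat}(A) then covers that case. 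The paper's proof leaves this case implicit.
\end{itemize}
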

\begin{proof}
In the dense regime $M^{\mathrm{eff}}_{H,k}\asymp_k M_H\asymp MH/L$ (equispaced Riemann sum), so up to
$k$-constants the inner objective is $\min(c_k\varepsilon H^{k+1},\,c\,(MH/L)^{-\gamma_d})$; the first
factor increases and the second decreases in $H$, so the continuous maximizer balances them,
$\varepsilon H^{k+1}\asymp(MH/L)^{-\gamma_d}$, giving
$H_\#=((L/M)^{\gamma_d}/\varepsilon)^{1/(k+1+\gamma_d)}$ and value $\asymp_k\varepsilon H_\#^{k+1}$.
If $H_\#\ge\Delta$ this maximizer is feasible; combining with the exact-past floor by
$\max\{a^{k+1},b^{k+1}\}\asymp_k(a+b)^{k+1}$ gives (i), and substituting $H_\#$ into
$\varepsilon H_\#^{k+1}$ yields the stated $M$-exponents. If $H_\#<\Delta$ the balancing bandwidth is
infeasible; for $H>H_\#$ the binding term is the resolution $c\,(M^{\mathrm{eff}}_{H,k})^{-\gamma_d}$
(decreasing in $H$), while at the smallest resolved widths $H\asymp\Delta$ one has
$M^{\mathrm{eff}}_{H,k}\asymp N$, so the supremum is of the same order as its value at $H\asymp\Delta$,
namely $\min(c_k\varepsilon\Delta^{k+1},\,cN^{-\gamma_d})$, giving (ii).
\end{proof}

\begin{theorem}[Matching upper bound at $k=0$]\label{thm:upper0}
For $k=0$ on a regular problem, the pooled (persistence) estimator --- the empirical
distribution of all $M_H=N n_H$ samples in a window $[t_n-H,t_n]$ --- satisfies
\[
   \E\,W_2(\hat\nu,\mu_{t_n+h})\ \lesssim\ \varepsilon\,(h+H)\ +\ M_H^{-\gamma_d}.
\]
In the dense regime ($H_\#\ge\Delta$, so $M_H\asymp MH/L$) optimizing $H$ matches the lower bound of
Theorem~\ref{thm:unified}; hence the $k=0$ unified rate $M^{-1/(d+1)}$ ($d\ge3$; $M^{-1/3}$ for
$d\le2$) is sharp.
\end{theorem}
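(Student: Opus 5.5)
The argument splits the risk by the triangle inequality into a temporal bias term and a spatial sampling term, and then optimizes the bandwidth. Write $\bar\mu_H=\frac{1}{n_H}\sum_{i:\,t_i\in[t_n-H,t_n]}\mu_{t_i}$ for the mixture of the in-window snapshot laws. Then
\[
W_2(\hat\nu,\mu_{t_n+h})\le W_2(\hat\nu,\bar\mu_H)+W_2(\bar\mu_H,\mu_{t_n+h}).
\]

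\textbf{Bias step.} For $k=0$ the class $\mathcal C_0(\varepsilon)$ bounds the metric speed, $|\dot\mu_t|=\|v_t\|_{\mu_t}\le\varepsilon$. Hence $W_2(\mu_s,\mu_{t_n+h})\le\varepsilon(t_n+h-s)\le\varepsilon(h+H)$ for every $s\in[t_n-H,t_n]$. Joint convexity of $W_2^2$ under mixtures (couple each component optimally with $\mu_{t_n+h}$ and average the couplings) gives
\[
W_2^2(\bar\mu_H,\mu_{t_n+h})\le\frac1{n_H}\sum_i W_2^2(\mu_{t_i},\mu_{t_n+h})\le\varepsilon^2(h+H)^2 .
\]
This step needs none of the tangent calculus; only the speed bound and absolute continuity are used.

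\textbf{Variance step.} The pooled sample is not i.i.d. from $\bar\mu_H$: it is a stratified sample with exactly $N$ points from each component. I would handle this by comparing with the standard i.i.d. analysis. Fournier--Guillin and Weed--Bach bound $\E W_2(\hat\mu_m,\mu)$ through a dyadic-partition (multiscale) bound, $\E W_2^2\lesssim\sum_j 2^{-2j}\sum_{Q\in\mathcal Q_j}\E|\hat\mu(Q)-\mu(Q)|$. On the compact $\Omega$ of Definition~\ref{def:reg}, the stratified cell counts have variance no larger than the i.i.d. multinomial ones, because the variance is a sum of per-stratum Bernoulli variances. Each term is therefore dominated by the same bound, and
\[
\E W_2(\hat\nu,\bar\mu_H)\lesssim M_H^{-\gamma_d},
\]
with a logarithmic factor at $d=2$, which I state at the power-law level as in the paper. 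Combining this with the bias step gives the displayed upper bound.

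\textbf{Matching step.} In the dense regime $M_H\asymp MH/L$, so I minimize $\varepsilon H+(MH/L)^{-\gamma_d}$ over $H$. The minimizer is exactly the $H_\#$ of Corollary~\ref{cor:denserate} with $k=0$, since the same balancing equation $\varepsilon H\asymp(MH/L)^{-\gamma_d}$ appears. Choosing $H\asymp H_\#$ (feasible since $H_\#\ge\Delta$, and capped at $L$) gives risk $\lesssim\varepsilon(h+H_\#)$. This matches the lower bound $\varepsilon(h+H_\#)$ of Corollary~\ref{cor:denserate}(i) at $k=0$, and gives the $M$-exponent $\gamma_d/(1+\gamma_d)$, i.e. $M^{-1/(d+1)}$ for $d\ge3$ and $M^{-1/3}$ for $d\le2$.

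The main obstacle I expect is the variance step, namely justifying the empirical-$W_2$ rate for the non-identically-distributed pooled sample, uniformly over the class. The multiscale bound makes this work because it depends only on cell-count deviations and on the support diameter, not on density regularity. If that comparison proved delicate, my fallback is to randomize: draw $M_H$ points uniformly with replacement from the pooled sample. This makes the draws i.i.d. from the pooled empirical measure conditionally on the data, so it is an i.i.d. sample from $\bar\mu_H$ marginally, and it costs only constants.
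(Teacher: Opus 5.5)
Your argument follows the paper's proof step for step. It uses the glued-coupling bound $W_2(\bar\mu_H,\mu_{t_n+h})\le\varepsilon(h+H)$. It bounds the empirical $W_2$ error of the stratified pool through the per-cell comparison $\mathrm{Var}\,\hat\mu_H(Q)\le\bar\mu_H(Q)/M_H$, which is the paper's Lemma~\ref{lem:pool}. It then balances $H$ exactly as in Corollary~\ref{cor:denserate}. The bias and matching steps are fine.

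\textbf{The gap: the variance step fails for $d\le3$.} As you justify it, this step does not give $M_H^{-\gamma_d}$ when $d\le3$. The multiscale bound you quote is \emph{linear} in the cell discrepancies. Using $\E|\hat\mu_H(Q)-\bar\mu_H(Q)|\le(\bar\mu_H(Q)/M_H)^{1/2}$, it yields $\E W_2^2\lesssim\sum_j 2^{-2j}\min(1,2^{jd/2}M_H^{-1/2})$. This sum is of order $M_H^{-1/2}$ for $d<4$, because the coarsest scale dominates, and of order $M_H^{-1/2}\log M_H$ at $d=4$. So you only get $\E W_2(\hat\mu_H,\bar\mu_H)\lesssim M_H^{-1/4}$ for $d\le3$. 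Optimizing $H$ then gives the exponent $1/5$ instead of $1/4$ ($d=3$) or $1/3$ ($d\le2$), and the matching claim fails there.

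This is not slack in the bookkeeping. For $\mu=\tfrac12(\delta_0+\delta_{e_1})$ one has $W_2(\hat\mu_m,\mu)=|\hat\mu_m(\{0\})-\tfrac12|^{1/2}\asymp m^{-1/4}$. Hence no argument that uses only independence, cell counts and the support diameter can reach $\gamma_d$ in $d\le3$, and that is precisely what you say suffices. This is the $m^{-1/(2p)}$ floor in the Fournier--Guillin and Weed--Bach theorems. The paper's Lemma~\ref{lem:pool} is stated under the same hypotheses and has the same defect.

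\textbf{The missing ingredient is the density bound of Definition~\ref{def:reg}.} Every $\mu_{t_i}$, and hence the mixture $\bar\mu_H$, has density in $[c/2,2C]$ on the convex set $\Omega$. That permits a bound \emph{quadratic} in the discrepancies. For instance:
\begin{itemize}
\item Chain $\bar\mu_H=\nu_0,\nu_1,\dots$ with $\nu_j=\sum_{Q\in\mathcal Q_j}\frac{\hat\mu_H(Q)}{\bar\mu_H(Q)}\,\bar\mu_H|_Q$, so that $W_2(\nu_j,\hat\mu_H)\lesssim2^{-j}$.
\item Bound each link cell by cell via $W_2(\alpha,\beta)\le2\|\beta-\alpha\|_{\dot H^{-1}(\alpha)}$. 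Combine this with the Poincar\'e inequality of the normalized $\bar\mu_H|_Q$, whose constant is $\lesssim2^{-2j}$ because the density ratio is bounded.
\item This gives $\E W_2^2(\nu_j,\nu_{j+1})\lesssim2^{-2j}\min(1,2^{jd}/M_H)$; a Chernoff bound disposes of cells with $\hat\mu_H(Q)\ll\bar\mu_H(Q)$.
\item Summing over $j$ gives $M_H^{-1/d}$ for $d\ge3$, $M_H^{-1/2}$ for $d=1$, and the power-law rate up to a logarithm at $d=2$.
\end{itemize}
Your observation that independent, non-identically distributed draws only decrease per-cell variances carries over verbatim to this weighted bound. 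The rest of your proof stands once this bound is substituted.

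\textbf{Your fallback does not help.} Resampling with replacement produces draws that are i.i.d.\ from $\hat\mu_H$ conditionally on the data, but marginally they are dependent, not i.i.d.\ from $\bar\mu_H$. Controlling the resampled measure's distance to $\bar\mu_H$ also still requires bounding $W_2(\hat\mu_H,\bar\mu_H)$, so nothing is gained.
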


\begin{conjecture}[General-$k$ upper bound]\label{conj:upper}
For $k\ge1$ on a regular problem (equispaced dense design), a degree-$k$ temporal local-polynomial forecaster on the
tangent bundle (geodesic/barycentric regression~\cite{fletcher} of the snapshots with sample splitting) attains
bias $\lesssim\varepsilon(h+H)^{k+1}$ (the order-$k$ Otto--Taylor remainder,
Proposition~\ref{prop:sharp}) and variance $\lesssim(MH/L)^{-\gamma_d}$, hence meets the lower
bound of Theorem~\ref{thm:unified} and the unified exponent $M^{-(k+1)/(d(k+1)+1)}$. The
construction and constant are established here only in the location channel
(Theorem~\ref{thm:sharp}, all $k$) and end-to-end at $k=0$ (Theorem~\ref{thm:upper0},
Figure~\ref{fig:unified}); Appendix~\ref{app:forecaster} gives the explicit estimator and reduces
this conjecture to a curvature-stability estimate~(C) and an optimal-transport map-estimation
rate~(S), both unconditional at $k=0$ and on flat submodels.
\end{conjecture}

\begin{remark}
The exponent degrades from the static shape rate $M^{-\gamma_d}$ (Theorem~\ref{thm:stat}A, recovered
as $k\to\infty$: a frozen target permits unlimited pooling) because a moving target limits
temporal pooling; higher smoothness $k$ recovers more of it, and in $d\le2$ the spatial channel
is already rate-$M^{-1/2}$, adding nothing beyond the location channel. The proven envelope:
lower bound for all $k$ (Theorem~\ref{thm:unified}), sharp at $k=0$
(Theorem~\ref{thm:upper0}); for $k\ge1$ Appendix~\ref{app:covariant} gives a covariant
(development-based) forecaster and shows it attains the rate \emph{conditionally} on two estimates
--- a comparison-geometry bias bound and an optimal-transport map-estimation rate
(Proposition~\ref{prop:covariant}). The \emph{unconditional} $k\ge1$ upper bound remains open
(Conjecture~\ref{conj:upper}).
Figure~\ref{fig:unified} plots the exponent and the $M^{-\gamma_d}$ ingredient.
\end{remark}

\begin{openproblem}[Unconditional general-$k$ upper bound]\label{op:upper}
Exhibit an estimator $\widehat\nu$ and a finite constant $c_k$ such that, for every regular problem
(Definition~\ref{def:reg}) with $\|\nabla_t^k v\|\le\varepsilon$ and every $k\ge1$,
$\E\,W_2(\widehat\nu,\mu_{t_n+h})\le c_k\,\varepsilon\,(h+H_\#)^{k+1}$ at the window-optimal $H_\#$,
thereby matching the lower bound of Theorem~\ref{thm:unified} \emph{without} Assumptions~(C),(S). By
Proposition~\ref{prop:covariant} it suffices to establish, on the regular class: \emph{(C)} a uniform
sectional-curvature upper bound $\bar\kappa<\infty$ together with a valid operator-valued Rauch
comparison in $(\PP,W_2)$, yielding a curvature-free anti-development bias; and \emph{(S)} a pooled
Brenier-map estimation rate
$\E\|\widehat U-\E\,\widehat U\|_{L^2(\bar\mu)}^2\lesssim M_H^{-2\gamma_d}$ at the empirical-measure
exponent $\gamma_d=\min(1/d,1/2)$. Both reduce to established facts at $k=0$ and on flat submodels
(Appendix~\ref{app:covariant}); the open content is their validity at the infinite-dimensional,
positively-curved general-$k$ level.
\end{openproblem}

\section{Numerical illustration}\label{sec:numerics}
The theory is illustrated numerically in Appendix~\ref{app:numerics}: horizon exponents $h^{k+1}$
on a flat translation family and a curved (Bures--Wasserstein) Gaussian path; the unified rate
$M^{-(k+1)/(d(k+1)+1)}$ under the window-optimal budget; robustness to the temperature smoothing
window (Appendix~\ref{app:smooth}); and two real series --- near-stationary equity returns versus a
smoothly drifting seasonal temperature --- at opposite ends of the drift/noise spectrum
(Section~\ref{sec:realdata}). The experiments confirm the proven $k=0$ rate and are consistent with
the conditional general-$k$ prediction; the contribution of this paper is theoretical and no claim
rests on them.
\section{Discussion}

We have mapped the forecastability of a slowly-varying curve in $\PP$ into two regimes: an
exact-past extrapolation floor set purely by temporal smoothness, dimension-free and of order
$h^{k+1}$, and a finite-sample statistical floor governed by the spatial cost of estimating a
measure. Their interaction is the paper's main object: because a moving target caps temporal
pooling, the static empirical-$W_2$ rate $M^{-\gamma_d}$ is unattainable, and the forecast risk obeys
a unified lower bound with $M$-exponent $\gamma_d(k+1)/(k+1+\gamma_d)$. The three remarks below
record what each floor certifies; we then state what is sharp, what the data decide, and what
remains open.

\begin{remark}[Worst vs.\ average case]
The floors scale as $h^{k+1}$ (worst case) and $h^{k+1/2}$ (rms, average case): a sup-bound
lets an adversary push consistently, a random derivative cancels. Complementary, not
matching; both identify the order-$k$ extrapolator as optimal, respectively in worst-case scaling and in Bayes risk within the Gaussian translation model.
\end{remark}

\begin{remark}[Adiabatic hierarchy]
$k=0,1,2$ give persistence ($\varepsilon h$), geodesic ($\varepsilon h^2/2$), spline
($\varepsilon h^3/6$): controlled derivatives $=$ forecastable horizon exponent.
\end{remark}

\begin{remark}[What is sharp]
The worst-case constant is exact on the flat translation submodel (Theorem~\ref{thm:lecam}), the
average-case Bayes risk is exact within its Gaussian model (Theorem~\ref{thm:bayes}), and the
finite-sample statistical exponents (Theorems~\ref{thm:stat},~\ref{thm:sharp}) are rate-sharp in
the regimes stated. Over $\PP$ the unified \emph{lower} bound
(Theorem~\ref{thm:unified}) is rigorous for all $k$ on the regular class, recovering the classical
empirical-$W_2$ minimax exponent (Fournier--Guillin; Niles-Weed--Berthet) via the explicit packing of
Appendix~\ref{app:reduction}; the matching upper
bound is proved at $k=0$ (Theorem~\ref{thm:upper0}), and for $k\ge1$ a covariant forecaster
(Appendix~\ref{app:covariant}) attains it \emph{conditionally} on a comparison-geometry bias bound
and an optimal-transport map-estimation rate (Proposition~\ref{prop:covariant}), the unconditional
$k\ge1$ case remaining open (Conjecture~\ref{conj:upper}); numerically the curse $\gamma_d$ is
tracked to $d=6$ by two
independent OT solvers (on the predicted ordering; the higher-$d$ fits are pre-asymptotic) and the
unified exponent is reproduced by the measured-curse-plus-exact-bias
construction (Figure~\ref{fig:unified}); the endpoint-estimation experiment ($h=0$) lands on the
predicted band for $d=2$ and remains pre-asymptotic for $d=3$. Open, for the $k\ge1$ upper bound:
(i) well-posedness of the Cartan development and a uniform sectional-curvature bound on the regular
class in $(\PP,W_2)$, on which the bias estimate~(C) rests; (ii) a transport-map (not merely
distribution) estimation rate for the drift-corrected pooled estimator, estimate~(S); and the
curvature correction beyond $k\le2$ (Proposition~\ref{prop:sharp}).
\end{remark}

\paragraph{The effective extrapolation order is data-dependent.} Which forecaster is optimal is
determined by the regularity actually present, not fixed a priori. The two real series of
Section~\ref{sec:realdata} bracket this: on the near-stationary S\&P cross-sections degree-$0$
persistence is best, while on the strongly-drifting temperature field the horizon slope grows with
$k$ and the optimal pooling bandwidth is interior ($H^*=3$ days), the observable slope rising with the
drift-to-noise ratio. In both, the moving forecast floor sits well above the finite-sample noise
reference. Crucially, the calibrated bandwidth predicts the held-out optimum
(Figure~\ref{fig:heldout}), so the bias--variance trade-off is a genuine prediction, not a post-hoc
fit.

\paragraph{Limitations and outlook.} The unified upper bound is established end-to-end only at
$k=0$ and in the location channel for all $k$; for $k\ge1$ a covariant forecaster attains it only
\emph{conditionally} on the curvature-stability and optimal-transport map-estimation estimates
isolated in Appendices~\ref{app:forecaster}--\ref{app:covariant} (given which,
Proposition~\ref{prop:covariant} matches the lower bound), and the curvature correction itself is
controlled only for $k\le2$ (Proposition~\ref{prop:sharp}); the unconditional $k\ge1$ upper bound
remains Conjecture~\ref{conj:upper}. Under a finite memory budget on the past, these floors become
the high-rate limit of a rate--distortion curve, developed separately. Extending the empirical study
to deseasonalized residuals, downstream tasks, and longer horizons is left to future work.

\appendix

\section{Proofs}\label{app:proofs}
This appendix collects the proofs of the results stated in the main text, in order of appearance.

\begin{proof}[Proof of Lemma~\ref{lem:embed}]
$(\tau_x,\tau_y)_\#\rho$ has cost $|x-y|^2$, so $W_2\le|x-y|$; the translation
$z\mapsto z+(y-x)$ is the Brenier map between translates, attaining it. Displacement
interpolation of two translates is a translate, so geodesics stay in the image
(totally geodesic); the induced metric is Euclidean (flat).
\end{proof}

\begin{proof}[Proof of Lemma~\ref{lem:mean}]
For any coupling $\pi$, $|\mean\alpha-\mean\beta|=|\E_\pi[X-Y]|\le(\E_\pi|X-Y|^2)^{1/2}$;
infimize over $\pi$.
\end{proof}

\begin{proof}[Proof of Theorem~\ref{thm:lecam}]
Two curves $\mu^b_t=(\tau_{x_b(t)})_\#\rho$ with $x_b(t)=0$ for $t\le t_n$ and
$x_b(t)=(-1)^b\frac{\varepsilon}{(k+1)!}(t-t_n)^{k+1}e$ for $t>t_n$. By
Lemma~\ref{lem:embed} the directions are flat, so $\|\nabla_t^k v^b_t\|=|x_b^{(k+1)}|=\varepsilon$
on $(t_n,\infty)$, hence $\mu^b_\bullet\in\Ck$ (the jump at $t_n$ is admissible under
the ess-sup bound, or mollify; Lemma~\ref{lem:moll}). The curves agree on $[0,t_n]$,
so the forecaster is fixed, while $W_2(\mu^0_{t_n+h},\mu^1_{t_n+h})=2\varepsilon
h^{k+1}/(k+1)!$; the triangle inequality gives the bound.
\end{proof}

\begin{proof}[Proof of Theorem~\ref{thm:bayes}]
With $\hat a:=\mean(\hat\nu)-\mean(\rho)$, Lemma~\ref{lem:mean} gives
$W_2^2(\hat\nu,\mu_{t_n+h})\ge|\hat a - x(t_n+h)|^2$. The state $(x,\dot x,\dots,x^{(k)})$
is Markov, so the past fixes the $k$-jet at $t_n$; Taylor with integral remainder gives
$x(t_n+h)=\sum_{j\le k}\frac{x^{(j)}(t_n)}{j!}h^j+\frac1{k!}\int_0^h(h-s)^k\xi(t_n+s)ds$,
a conditional mean plus an independent residual of covariance
$\frac{\sigma^2 I_d}{(k!)^2}\frac{h^{2k+1}}{2k+1}$, of trace as claimed.
\end{proof}

\begin{proof}[Proof of Theorem~\ref{thm:stat}]
\emph{(A)} Restrict to static curves $\mu_t\equiv\mu\in\Ck$: data are $M$ i.i.d.\ draws
from $\mu$ and $\mu_{t_n+h}=\mu$, so forecasting is $W_2$ density estimation, minimax
$\asymp M^{-\gamma_d}$ ($\gamma_d=\min(1/d,1/2)$, up to a possible critical-dimension logarithmic
correction at $d=2$) by Fano packing (Niles-Weed--Berthet; Singh--P\'oczos; upper bound
Fournier--Guillin).
\emph{(B)} Two-point along $e$: $x_0\equiv0$ vs $x_1=p+b$, $p$ degree-$k$,
$b(t)=\frac{\varepsilon}{(k+1)!}(t-t_n)_+^{k+1}$. Both in $\Ck$; $b\equiv0$ on the window,
so data differ only through $p$. With $\KL(\rho\|\rho(\cdot-\delta e))=\tfrac12 I_e\delta^2
+O(\delta^3)$, $\KL(P_0\|P_1)\le\frac{N I_e}{2}a^\top G a$; impose $a^\top G a\le\frac1{N I_e}$
($\KL\le\tfrac12$, $\TV\le\tfrac12$). Le~Cam + Lemma~\ref{lem:mean} give risk
$\ge\frac14\big[\,|w^\top a|+\frac{\varepsilon h^{k+1}}{(k+1)!}\,\big]$; maximizing $|w^\top a|$ over the
ellipsoid gives $\sup_{a^\top G a\le(N I_e)^{-1}}|w^\top a|=(N I_e)^{-1/2}(w^\top G^{-1}w)^{1/2}$
(Cauchy--Schwarz in the $G$-metric), the bump $b$ adding the extrapolation floor.
\end{proof}

\begin{proof}[Proof of Theorem~\ref{thm:sharp}]
\emph{Lower bound (optimized bump width).} Take $x_0\equiv0$, $x_1=\phi$, where on
$[t_n-H,t_n]$ the function $\phi$ spends its full budget $\|\phi^{(k+1)}\|_\infty\le\varepsilon$
to build a $k$-jet $\phi^{(j)}(t_n)\asymp\varepsilon H^{k+1-j}$, then continues by degree-$k$
Taylor for $t>t_n$ (so $\phi^{(k+1)}=0$ there and $\phi\in\Ck$). The future separation is
$\phi(t_n+h)=\sum_{j=0}^k\frac{\phi^{(j)}(t_n)}{j!}h^j\asymp_k\varepsilon(h+H)^{k+1}$ (the bump matches
the endpoint $k$-jet up to $k$-dependent constants; the precise $1/(k+1)!$ is the maximal-jet
normalization and is not needed). On the window $\phi$ is supported
on $[t_n-H,t_n]$ with $|\phi|\le c\,\varepsilon H^{k+1}$, so
$\sum_i\phi(t_i)^2/\sigma_N^2\asymp\frac{M\varepsilon^2}{\sigma_1^2 L}H^{2k+3}$; demanding
indistinguishability ($\le1$) gives $H\le H_*$. Le~Cam with Lemma~\ref{lem:mean} yields risk
$\gtrsim\phi(t_n+h)$, maximized at $H=H_*$.
\emph{Upper bound.} A degree-$k$ local polynomial of bandwidth $H$ has worst-case bias
$\asymp\varepsilon(h+H)^{k+1}$ and variance $\asymp\sigma_1^2 L/(MH)$ for $h\lesssim H$;
minimizing $\varepsilon^2 H^{2(k+1)}+\sigma_1^2 L/(MH)$ over $H$ gives $H\asymp H_*$ and a
matching error.
\end{proof}

\begin{proof}[Proof of Lemma~\ref{lem:reach}]
Let $(\gamma_u)_{u\in[0,1]}$ be the unit-time constant-speed $W_2$-geodesic from $\mu_0$ to
$\nu$; its velocity field $V$ has $\|V\|\equiv\delta$ and $\nabla_V V=0$. With the time profile
$\theta(t)=\big((t-(t_n-H))/H\big)^{k+1}$ on $[t_n-H,t_n]$ --- so $\theta^{(j)}(t_n-H)=0$ for
$j\le k$ and $\theta^{(k+1)}\equiv(k+1)!/H^{k+1}$ --- set $\mu_t=\gamma_{\theta(t)}$ on the window
and $\mu_t=\mu_0$ before it. Then $v_t=\theta'(t)V$, and since $\nabla_V^{\,j}V=0$ for all
$j\ge1$ only the scalar derivatives of $\theta$ survive: $\nabla_t^k v_t=\theta^{(k+1)}(t)\,V$,
hence $\|\nabla_t^k v_t\|=\theta^{(k+1)}\,\delta=(k+1)!\,\delta/H^{k+1}\le\varepsilon$. The
pre-window piece has $v\equiv0$, so the curve lies in $\Ck$.
\end{proof}

\begin{proof}[Proof of Theorem~\ref{thm:unified}]
Fix $H$ and restrict $\Ck$ to the sub-family of curves equal to a fixed $\mu_0$ for
$t\le t_n-H$ that, by Lemma~\ref{lem:reach}, reach an arbitrary $\nu$ in the ball
$\mathcal B_H:=\{\nu:W_2(\mu_0,\nu)\le\varepsilon H^{k+1}/(k+1)!\}$ at $t_n$. Only the
$n_H=\#\{i:t_i\in[t_n-H,t_n]\}$ snapshots inside the window depend on $\nu$, and their information is
the design-weighted $M^{\mathrm{eff}}_{H,k}=N\sum_i q_k(s_i)^2\le M_H=Nn_H$
($M^{\mathrm{eff}}_{H,k}\asymp_k M_H\asymp MH/L$ in the equispaced dense regime $H\ge\Delta$,
Corollary~\ref{cor:denserate}). These are drawn not from $\nu$ but from the intermediate laws along the geodesic
$\mu_0\to\nu$, so the empirical-$W_2$ minimax bound cannot be transferred to $\nu$ directly.
Appendix~\ref{app:reduction} (Proposition~\ref{prop:reduction}) closes this step: an explicit
transport-map packing of $\mathcal B_H$ together with a Fano bound on the full snapshot experiment
shows that the window experiment has effective KL sample size $\asymp_k M^{\mathrm{eff}}_{H,k}$,
yielding for each feasible bandwidth $H$ the local minimax lower bound
$\gtrsim_k\min(c_k\,\varepsilon H^{k+1},\,c\,(M^{\mathrm{eff}}_{H,k})^{-\gamma_d})$; taking the supremum over feasible $H$
gives the second term of the displayed bound, with no continuity or monotonicity of $H\mapsto M_H$
used. Appendix~\ref{app:reduction} freezes each packing path after
$t_n$, so over that subfamily $\mu_{t_n+h}^{(\omega)}=\nu_\omega$ exactly and forecasting reduces to
endpoint estimation with no $h$-dependence; the first term is the separate exact-past floor
$\varepsilon h^{k+1}/(k+1)!$ of Theorem~\ref{thm:lecam}. The closed-form optimization of this
supremum --- the crossing $H=H_\#$ and the resulting $\varepsilon(h+H_\#)^{k+1}$, via
$\max\{a^{k+1},b^{k+1}\}\asymp_k(a+b)^{k+1}$ --- is carried out for the equispaced dense design in
Corollary~\ref{cor:denserate}.
\end{proof}

\begin{lemma}[Non-i.i.d.\ empirical $W_2$]\label{lem:pool}
Let $X_1,\dots,X_m$ be independent, $X_j\sim P_j$, all supported on a compact $\Omega\subset\R^d$, and
$\bar P_m=\tfrac1m\sum_j P_j$. Then
$\E\,W_2\big(\tfrac1m\sum_j\delta_{X_j},\,\bar P_m\big)\le C_{\Omega,d}\,m^{-\gamma_d}$,
$\gamma_d=\min(1/d,1/2)$ (up to the critical $d=2$ logarithm).
\end{lemma}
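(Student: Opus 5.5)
The plan is a smoothing-plus-negative-Sobolev argument. The first step is a caveat about scope. As literally stated, for an arbitrary mixture $\bar P_m$ of laws on a compact set, the bound fails in low dimension. For general measures on $[0,1]$ the empirical $W_2$-rate can be as slow as $m^{-1/4}$ (Bobkov--Ledoux). Also, the naive dyadic route only controls $\E W_2^2$, which gives $\E W_2\lesssim m^{-1/4}$ for $d\le4$.

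I would therefore prove the lemma under the hypothesis under which it is used: $\bar P_m$ has a density in $[c/2,2C]$ on the cube $\Omega=[0,1]^d$, as in Definition~\ref{def:reg}. I would say explicitly that the constant $C_{\Omega,d}$ also depends on $c$. For $d\ge5$ no density assumption is needed, and I would record that separately.

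\emph{Step 1: the case $d\ge5$, no assumption.} Use the multiscale bound
\[
W_2^2(\mu,\nu)\le C\sum_{k\ge0}2^{-2k}\sum_{Q\in\mathcal D_k}|\mu(Q)-\nu(Q)|
\]
over dyadic cubes. Set $S_Q=\sum_j(\mathbf 1_Q(X_j)-P_j(Q))$, a sum of independent centred terms, so that $\E|S_Q|/m\le\sqrt{\bar P_m(Q)/m}$. This is the only place independence enters: the variance is at most $\sum_j P_j(Q)$. Cauchy--Schwarz over the $2^{kd}$ cubes gives
\[
\E W_2^2\lesssim\sum_k2^{-2k}\min\bigl(1,\sqrt{2^{kd}/m}\bigr)\asymp m^{-2/d},
\]
and Jensen then gives $\E W_2\lesssim m^{-1/d}$.

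\emph{Step 2: the case $d\le4$, density bounded below.} Let $K_\delta$ be the Neumann heat semigroup on the cube at time $\delta^2$; it maps measures on $\Omega$ to measures on $\Omega$. I would use three facts.
\begin{itemize}
\item A coupling by reflected Brownian motion gives $W_2(\mu,K_\delta\mu)\lesssim\delta$ for every $\mu$.
\item $K_\delta\bar P_m$ still has density $\ge c/2$, since the semigroup preserves lower bounds.
\item Peyre's inequality gives $W_2(\alpha,\beta)\le 2\|\alpha-\beta\|_{\dot H^{-1}(\beta)}\le C_c\|\alpha-\beta\|_{\dot H^{-1}(\mathrm{Leb})}$ when $\beta\ge c/2$.
\end{itemize}
The triangle inequality then gives
\[
W_2(\mu_m,\bar P_m)\le 2C'\delta+C_c\|K_\delta(\mu_m-\bar P_m)\|_{\dot H^{-1}}.
\]
Expand in the Neumann cosine eigenbasis $\{\phi_\ell,\lambda_\ell\}$. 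The $\ell$-th coefficient is $e^{-\delta^2\lambda_\ell}\frac1m\sum_j(\phi_\ell(X_j)-\E\phi_\ell(X_j))$. Its second moment is at most $e^{-2\delta^2\lambda_\ell}\|\phi_\ell\|_\infty^2/m$, again by independence only. Hence
\[
\E\|K_\delta(\mu_m-\bar P_m)\|^2_{\dot H^{-1}}\lesssim\frac1m\sum_{\ell}\frac{e^{-2\delta^2\lambda_\ell}}{\lambda_\ell}\asymp\frac1m\begin{cases}1,&d=1,\\ \log(1/\delta),&d=2,\\ \delta^{2-d},&d\ge3.\end{cases}
\]
The case split comes from Weyl counting, $\#\{\lambda_\ell\le\Lambda\}\asymp\Lambda^{d/2}$.

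\emph{Step 3: optimize $\delta$.} For $d\ge3$, take $\delta=m^{-1/d}$, giving $\E W_2\lesssim m^{-1/d}$. For $d=2$, take $\delta=m^{-1/2}$, giving $m^{-1/2}\sqrt{\log m}$; this is the critical logarithm. For $d=1$, take $\delta=m^{-1/2}$, giving $m^{-1/2}$. Since $\gamma_d=\min(1/d,1/2)$, this is exactly the claimed rate up to the $d=2$ logarithm. The $d\ge3$ case is also covered by Step~1 when $d\ge5$, so the two steps are consistent.

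\emph{Main obstacle.} I expect the main difficulty to be the smoothing step on the bounded domain. One must check that the reflected heat flow both keeps measures on $\Omega$ and satisfies $W_2(\mu,K_\delta\mu)\lesssim\delta$ uniformly up to the boundary. I would first try the reflected-Brownian coupling on the convex cube, where the reflection is a contraction and the displacement is at most the free Brownian displacement $\sqrt d\,\delta$ coordinatewise. If that fails, the fallback is to extend $\mu$ by even reflection to the torus $[-1,1]^d$ and use periodic heat kernels there.
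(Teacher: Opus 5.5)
Your proposal is correct, and for $d\le4$ it takes a genuinely different route from the paper.

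The paper's proof is only the dyadic sketch of your Step~1. It uses the Fournier--Guillin/Weed--Bach multiscale bound with the independence-only per-cube variance $\mathrm{Var}(\hat P(Q))\le\bar P_m(Q)/m$, then asserts that ``summing over scales reproduces the i.i.d.\ rate.'' Your objection to that sketch is sound. The multiscale inequality controls $W_2^2$, so after Jensen it gives $\E W_2\lesssim m^{-1/d}$ only for $d\ge5$ (up to a logarithm at $d=4$), and merely $m^{-1/4}$ for $d\le3$. The two-point law $\tfrac12(\delta_0+\delta_{e_1})$ has $\E W_2\asymp m^{-1/4}$ already in the i.i.d.\ case. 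So the lemma as literally stated fails for $d\le3$, and the paper's sketch does not cover that range.

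Your Step~2 replaces cube counting with three ingredients:
\begin{itemize}
\item Neumann heat smoothing, where the folding-map coupling gives $W_2(\mu,K_\delta\mu)\le\sqrt d\,\delta$;
\item Peyre's upper comparison $W_2\le2\|\cdot\|_{\dot H^{-1}(\beta)}$ at the smoothed reference $\beta=K_\delta\bar P_m\ge c/2$;
\item an exact coefficient-wise variance bound in the cosine basis.
\end{itemize}
Independence is again the only probabilistic input. Optimizing $\delta$ gives $m^{-\gamma_d}$, with the $\sqrt{\log m}$ at $d=2$.

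The trade-off is this. The paper's route needs no density hypothesis, but it delivers the stated exponent only for $d\ge5$. Yours delivers the correct low-dimensional exponent at the price of a lower density bound on a convex (here cubic) domain. That is exactly what the regular class supplies to the snapshots in Theorem~\ref{thm:upper0}. Its $d\le2$ rate $M^{-1/3}$ rests on $\gamma_d=1/2$; with only $m^{-1/4}$ the same pooling argument would give $M^{-1/5}$.

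If you want the general compact convex $\Omega$ of Definition~\ref{def:reg} rather than the cube, your argument still goes through with two changes. First, reflected Brownian motion in a convex domain still satisfies $\E|X_t-X_0|^2\le d\,t$, because the boundary push has nonpositive inner product with $X_t-X_0$. Second, instead of $\|\phi_\ell\|_\infty$, use the upper density bound: the unsmoothed coefficient $\tfrac1m\sum_j(\phi_\ell(X_j)-\E\phi_\ell(X_j))$ has second moment at most $\tfrac1m\int\phi_\ell^2\,d\bar P_m\le 2C/m$. Combine this with a Weyl-type bound on the Neumann spectrum.
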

\begin{proof}
The Fournier--Guillin / Weed--Bach dyadic argument bounds $W_2$ by a weighted sum of
$\E\,|\hat P(Q)-\bar P_m(Q)|$ over dyadic cubes $Q$, $\hat P=\tfrac1m\sum_j\delta_{X_j}$. Independence
gives $\mathrm{Var}\big(\hat P(Q)\big)=\tfrac1{m^2}\sum_j P_j(Q)\big(1-P_j(Q)\big)\le \bar P_m(Q)/m$,
the same per-cube control as the i.i.d.\ case; identical distribution is never used, only independence
and bounded support. Summing over scales reproduces the i.i.d.\ rate.
\end{proof}

\begin{proof}[Proof of Theorem~\ref{thm:upper0}]
We bound a population bias and an empirical fluctuation separately.
\emph{Step 1 (population mixture bias).} The speed bound gives
$W_2(\mu_t,\mu_{t_n+h})\le\varepsilon(h+H)$ for all $t\in[t_n-H,t_n]$. Gluing the optimal couplings
of each $(\mu_{t_i},\mu_{t_n+h})$ shows the population pooled mixture
$\bar\mu_H=\sum_i\lambda_i\mu_{t_i}$ obeys
$W_2^2(\bar\mu_H,\mu_{t_n+h})\le\sum_i\lambda_i W_2^2(\mu_{t_i},\mu_{t_n+h})
\le\max_i W_2^2(\mu_{t_i},\mu_{t_n+h})\le(\varepsilon(h+H))^2$, i.e.\
$W_2(\bar\mu_H,\mu_{t_n+h})\le\varepsilon(h+H)$.
\emph{Step 2 (empirical fluctuation).} The pooled empirical measure $\hat\mu_H$ is built from the
$M_H=Nn_H$ window samples --- independent but not identically distributed, $N$ from each snapshot
$\mu_{t_i}$. By Lemma~\ref{lem:pool} (the dyadic empirical-$W_2$ argument needs only independence and
bounded support, not identical distribution) it concentrates on its mean law
$\bar\mu_H=\sum_i\lambda_i\mu_{t_i}$ at the empirical-$W_2$ rate,
$\E\,W_2(\hat\mu_H,\bar\mu_H)\lesssim M_H^{-\gamma_d}$. This is the fixed-$N$-per-snapshot design;
it is not identical to i.i.d.\ mixture sampling, where the per-snapshot counts would themselves be
random.
\emph{Step 3 (triangle inequality).} Hence
$\E\,W_2(\hat\mu_H,\mu_{t_n+h})\le W_2(\bar\mu_H,\mu_{t_n+h})+\E\,W_2(\hat\mu_H,\bar\mu_H)
\lesssim\varepsilon(h+H)+M_H^{-\gamma_d}$, as in the theorem statement for arbitrary design. In the
equispaced dense regime ($H\ge\Delta$), $M_H\asymp MH/L$, and optimizing $H$ recovers the $k=0$ rate
of Corollary~\ref{cor:denserate}.
Figure~\ref{fig:unified} is consistent with both terms and the optimized exponent.
\end{proof}

\section{The temporal--spatial reduction: a window-experiment Fano bound}\label{app:reduction}
This appendix proves the local minimax lower bound invoked in the proof of
Theorem~\ref{thm:unified}. The point it settles is that the window snapshots are drawn not from the
endpoint $\nu$ but from the intermediate laws $\mu_s^{\nu}$ along the geodesic $\mu_0\to\nu$;
reachability of $\nu$ (Lemma~\ref{lem:reach}) does not by itself make the window experiment
equivalent to direct sampling from $\nu$. We construct an explicit packing of the reachable ball and
bound the Kullback--Leibler (KL) divergence of the \emph{full} snapshot experiment, so that Fano's
inequality applies. The spatial packing follows the localized transport perturbation of Wasserstein
minimax lower bounds (Niles-Weed--Berthet; Weed--Bach); the smoothstep temporal embedding and the
full-window KL accumulation are the new ingredients.

\paragraph{Window experiment.} Take $\Omega=[0,1]^d$ and let $\mu_0\equiv1$ be the \emph{uniform}
(constant) reference density (Definition~\ref{def:reg}); only this single hard reference is needed,
and its constancy --- not merely a two-sided bound $0<c\le\mu_0\le C$ --- is what the density and
separation estimates below require. Parametrise the window by
$s\in[0,1]$, $s=(t-t_n+H)/H$, with a smoothstep schedule $q_k(s)$ in place of the bare $s^{k+1}$ of
Lemma~\ref{lem:reach}: $q_k(0)=0$, $q_k(1)=1$, $q_k^{(j)}(0)=q_k^{(j)}(1)=0$ for $1\le j\le k$, and
$\|q_k^{(k+1)}\|_\infty\le C_k$, so the path is $C^k$-flat at \emph{both} ends (flat at $s=0$ to glue
with the past constant curve, flat at $s=1$ so it can be frozen at $\nu_\omega$ afterwards). A concrete
choice is the regularized incomplete-beta profile
$q_k(s)=\int_0^s u^k(1-u)^k\,\mathrm du\,\big/\!\int_0^1 u^k(1-u)^k\,\mathrm du$, with
$C_k=\|q_k^{(k+1)}\|_\infty$ growing in $k$; all $\gtrsim_k$ constants below are $k$-dependent. There are
$n_H=\#\{i:t_i\in[t_n-H,t_n]\}$ in-window snapshots at $s_i$, each sampled $N$ times, $M_H:=Nn_H$ (for the equispaced design $n_H\asymp nH/L$ and $M_H\asymp MH/L$ once $H\ge\Delta$, Corollary~\ref{cor:denserate}).

\paragraph{Transport-map packing.} Partition $\Omega$ into $m^d$ subcubes of side $1/m$ with centres
$x_c$. Fix a smooth $\Phi$ compactly supported in the open unit cube $(0,1)^d$ with $\int\Phi=0$, and for
$\omega\in\{\pm1\}^{m^d}$ set
\[
  \rho_\omega(x)=\frac{a}{m}\sum_c \omega_c\,\Phi\big(m(x-x_c)\big),\qquad
  \nu_\omega=(\nabla\psi_\omega)_\#\mu_0,\quad \psi_\omega(x)=\tfrac12|x|^2+\rho_\omega(x).
\]
For $am\le c_0$ (a small constant) $\psi_\omega$ is convex, so $\nabla\psi_\omega=\mathrm{id}+\nabla\rho_\omega$
is a Brenier map and $\mu_s^{(\omega)}=(\mathrm{id}+q_k(s)\nabla\rho_\omega)_\#\mu_0$ traces the
$W_2$-geodesic from $\mu_0$ to $\nu_\omega$ on a $C^k$-flat schedule (Lemma~\ref{lem:bump}). The
displacement field $\nabla\rho_\omega=a\sum_c\omega_c\nabla\Phi(m(x-x_c))$ has amplitude $\asymp a$ on
each cell of Lebesgue volume $m^{-d}$ (equal to its $\mu_0$-mass, $\mu_0\equiv1$), so with constants
depending only on $\Phi,d$,
\[
  W_2(\mu_0,\nu_\omega)\asymp a,\qquad
  W_2(\nu_\omega,\nu_{\omega'})\asymp a\,\sqrt{d_H(\omega,\omega')/m^d},
\]
$d_H$ the Hamming distance; the upper bound is the common-source coupling and the matching lower
bound is the bi-Lipschitz estimate of Lemma~\ref{lem:sep}, with
$\|\nabla\rho_\omega-\nabla\rho_{\omega'}\|_{L^2(\mu_0)}^2\asymp a^2\,d_H/m^d$ over the $d_H$ differing
cells. By the Varshamov--Gilbert bound there is
$\mathcal W\subset\{\pm1\}^{m^d}$ with $|\mathcal W|\ge 2^{m^d/8}$ and pairwise $d_H\ge m^d/8$, hence
$W_2(\nu_\omega,\nu_{\omega'})\gtrsim a$ on $\mathcal W$.

\begin{lemma}[Transport separation for the bump packing: a two-sided $W_2$ bound]\label{lem:sep}
Let $\mu_0\equiv1$ be the uniform density on $\Omega=[0,1]^d$, and let $\rho_\omega,\rho_{\omega'}$ be
two potentials of the packing above with $am\le c_0$ for a small $c_0=c_0(\Phi,d)$. With
$u=\nabla\rho_\omega$, $v=\nabla\rho_{\omega'}$ and a constant $c'=c'(\Phi,d)>0$,
\[
  c'\,\|u-v\|_{L^2(\mu_0)}\ \le\ W_2\big((\mathrm{id}+u)_\#\mu_0,\,(\mathrm{id}+v)_\#\mu_0\big)\ \le\ \|u-v\|_{L^2(\mu_0)};
\]
in particular $W_2(\nu_\omega,\nu_{\omega'})\asymp a\,\sqrt{d_H(\omega,\omega')/m^d}$.
\end{lemma}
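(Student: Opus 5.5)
The upper bound is the common-source coupling. The plan $(\mathrm{id}+u,\mathrm{id}+v)_\#\mu_0$ couples the two measures, and its cost is $\int|u-v|^2\,d\mu_0$. So $W_2\le\|u-v\|_{L^2(\mu_0)}$ with no hypotheses at all.

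For the lower bound I will use the dual (Kantorovich) side with a test function built from the difference of potentials. The key fact is that for $\sigma=(\mathrm{id}+u)_\#\mu_0$, $\tau=(\mathrm{id}+v)_\#\mu_0$ and any $f$ with $\|\nabla f\|_\infty\le 1$,
\[
W_2(\sigma,\tau)\ \ge\ W_1(\sigma,\tau)\ \ge\ \int f\,d\sigma-\int f\,d\tau=\int\big[f(x+u(x))-f(x+v(x))\big]\,d\mu_0(x).
\]
I take $f=\lambda(\rho_\omega-\rho_{\omega'})$ with a normalizing $\lambda$. Then $\nabla f=\lambda(u-v)$, and $\|u-v\|_\infty\lesssim a$ because $\nabla\rho_\omega=a\sum_c\omega_c\nabla\Phi(m(\cdot-x_c))$. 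So $\lambda\asymp 1/a$ makes $f$ 1-Lipschitz.

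Next I Taylor-expand the integrand around $x$:
\[
f(x+u)-f(x+v)=\nabla f(x)\cdot(u-v)+R,\qquad |R|\le \tfrac12\|\nabla^2 f\|_\infty\big(|u|^2+|v|^2\big).
\]
The main term integrates to $\lambda\|u-v\|^2_{L^2(\mu_0)}$. This is where constancy of $\mu_0$ matters: $\mu_0$-integrals are Lebesgue integrals, so there is no density weight to correlate with the bumps.

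The remainder is the main obstacle, since a naive bound is not small enough. We have $\|\nabla^2 f\|_\infty\asymp\lambda am\asymp m$ and $|u|^2\lesssim a^2$, so $|R|\lesssim m a^2$. This must be compared with $\lambda\|u-v\|^2\asymp a\, d_H/m^d$. A sup bound on $R$ summed over all cells fails when $d_H\ll m^d$.

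I will fix this by localizing. On cells where $\omega_c=\omega'_c$ we have $u=v$, so the integrand vanishes identically and $R$ lives only on the $d_H$ differing cells. There $|R|\lesssim ma^2$ on volume $d_H m^{-d}$, giving total remainder $\lesssim (am)\,a\,d_H m^{-d}$. Since $am\le c_0$, the remainder is at most a $c_0$-fraction of the main term $\asymp a\,d_H m^{-d}$. Choosing $c_0$ small relative to constants depending only on $\Phi,d$ then gives
\[
W_2\ge c\,a\,d_H/m^d\ \asymp\ c\,\lambda\|u-v\|^2 .
\]

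A cleaner route to the constant $c'$ avoids the $\lambda$ bookkeeping. Take $f$ to be the difference potential $\varphi=\rho_\omega-\rho_{\omega'}$ itself and use the sharper inequality
\[
\int\varphi\,d(\sigma-\tau)\ \le\ \|\nabla\varphi\|_{L^2(\bar\pi)}\,W_2(\sigma,\tau),
\]
where $\bar\pi$ is a displacement average. Because $\bar\pi$ has density in $[c/2,2C]$ near $\mu_0\equiv1$, this norm is $\lesssim\|u-v\|_{L^2(\mu_0)}$. Dividing by it then gives directly $W_2\ge c'\|u-v\|_{L^2(\mu_0)}$.

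To justify that inequality, I interpolate $\sigma_\theta=(\mathrm{id}+u+\theta(v-u))_\#\mu_0$ and differentiate $\int\varphi\,d\sigma_\theta$ in $\theta$. The derivative is bounded using $\|\nabla\varphi\|$ at the interpolant together with the same Taylor control. If this step proves delicate, I fall back on the $W_1$ version above, which already suffices. Finally, substituting $\|u-v\|^2_{L^2(\mu_0)}=\|\nabla\Phi\|_2^2\,a^2 d_H/m^d$, which holds exactly since the bumps have disjoint supports and $\mu_0$ is Lebesgue, gives the stated $W_2(\nu_\omega,\nu_{\omega'})\asymp a\sqrt{d_H/m^d}$.
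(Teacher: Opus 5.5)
Your upper bound matches the paper's. Neither of your routes, however, establishes the lower bound $W_2\ge c'\|u-v\|_{L^2(\mu_0)}$.

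\emph{The $W_1$ route is rigorous but intrinsically too weak.} It gives $W_2\ge W_1\gtrsim\lambda\|u-v\|_{L^2(\mu_0)}^2\asymp a\,d_H/m^d$. Since $\lambda\|u-v\|_{L^2(\mu_0)}\asymp\sqrt{d_H/m^d}$, this is $\|u-v\|_{L^2(\mu_0)}$ times a factor $\sqrt{d_H/m^d}$, not $c'\|u-v\|_{L^2(\mu_0)}$. A better Lipschitz test function cannot help, because $W_1$ really is that small. For a single flipped cell the common-source coupling gives $W_1\le\int|u-v|\,\mathrm dx\lesssim a\,m^{-d}$, whereas the lemma asserts $W_2\asymp a\,m^{-d/2}$. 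So your claim that the $W_1$ fallback ``already suffices'' is false for the statement as written. It only suffices for the Varshamov--Gilbert pairs with $d_H\ge m^d/8$ used in Proposition~\ref{prop:reduction}.

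\emph{Your second route has the right architecture, but its key inequality is not justified.} It is an $\dot H^{-1}$-type comparison tested against $\varphi=\rho_\omega-\rho_{\omega'}$, which is exactly the Neumann potential $\Lambda$ in the paper's proof. Your Lagrangian Taylor bound $\int\varphi\,\mathrm d(\sigma-\tau)\ge(1-Cam)\|u-v\|_{L^2(\mu_0)}^2$, localized to the differing cells, is correct and simpler than the paper's Eulerian remainder analysis. The failure is in justifying $\int\varphi\,\mathrm d(\sigma-\tau)\le\|\nabla\varphi\|_{L^2(\bar\pi)}W_2(\sigma,\tau)$. Differentiating along the common-source interpolation $\sigma_\theta=(\mathrm{id}+u+\theta(v-u))_\#\mu_0$ only yields $\sup_\theta\|\nabla\varphi\|_{L^2(\sigma_\theta)}\,\|u-v\|_{L^2(\mu_0)}$. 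That is the cost of the common-source coupling, an \emph{upper} bound on $W_2$. Combined with your Taylor bound, it gives only the tautology $\|u-v\|\lesssim\|u-v\|$.

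To get $W_2$ on the right, you must interpolate along an \emph{optimal} plan, i.e.\ McCann interpolation, whose velocity has norm $\equiv W_2$. You then need its density bounded above by the endpoint bound $1+C'am$, via displacement convexity of the $L^\infty$ norm. This is precisely the Loeper/Peyr\'e inequality the paper imports, and it is what your unproved assertion about the density of $\bar\pi$ requires.

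Alternatively, you can show that your common-source plan \emph{is} optimal here. Set $T=(\mathrm{id}+v)\circ(\mathrm{id}+u)^{-1}$. It is the identity on agreeing cells and near every cell boundary. On differing cells $v=-u$, so $DT=(I-\nabla u)(I+\nabla u)^{-1}$, which is symmetric (the factors commute) and positive definite for $am\le c_0$. Hence $T$ is the gradient of a convex function on the convex set $\Omega$, i.e.\ the Brenier map. Then $W_2=\|u-v\|_{L^2(\mu_0)}$ exactly, with $c'=1$, and your $\sigma_\theta$ becomes the McCann interpolant.

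As written, neither ingredient is present, so the lower bound does not go through.
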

\noindent For a signed measure $\xi$ on $\Omega$ with $\xi(\Omega)=0$ we use the weighted homogeneous
norm $\|\xi\|_{\dot H^{-1}(\mu_0)}:=\sup\{\int_\Omega g\,\mathrm d\xi:\,g\in C^\infty(\Omega),\,\int_\Omega|\nabla g|^2\,\mathrm d\mu_0\le1\}$,
equal to $\|\nabla\Lambda\xi\|_{L^2(\mu_0)}$, where $\Lambda\xi$ solves the weighted Neumann problem
$-\nabla\!\cdot(\mu_0\nabla\Lambda\xi)=\xi$ on $\Omega$, $\partial_n\Lambda\xi|_{\partial\Omega}=0$;
since $\mu_0\equiv1$ it coincides with the unweighted $\dot H^{-1}(\mathrm dx)$ norm.
\begin{proof}
\emph{Upper bound.} The common-source coupling $x\mapsto\big((\mathrm{id}+u)(x),(\mathrm{id}+v)(x)\big)$
has cost $\int_\Omega|u-v|^2\,\mathrm d\mu_0$, so $W_2\le\|u-v\|_{L^2(\mu_0)}$ unconditionally.

\emph{Lower bound.} Let $p_w$ be the density of $\nu_w:=(\mathrm{id}+w)_\#\mu_0$. Since $\mu_0\equiv1$,
$p_w=1/\det\!\big(I+\nabla w\circ(\mathrm{id}+w)^{-1}\big)$, so $\|p_w-1\|_\infty\lesssim\|\nabla w\|_\infty\lesssim am$
for $am\le c_0$ (Lemma~\ref{lem:bump}(i)); in particular $p_u,p_v\in[1-C'am,1+C'am]\subset[c/2,2C]$.
Peyr\'e's non-asymptotic comparison \cite[Thm.~1]{peyre} gives
$W_2(\nu_u,\nu_v)\ge c''\,\|p_u-p_v\|_{\dot H^{-1}(\mathrm dx)}$ with $c''=\big(2(1+C'c_0)\big)^{-1/2}$,
and $\dot H^{-1}(\mathrm dx)=\dot H^{-1}(\mu_0)$ as $\mu_0\equiv1$.

We compare $p_v-p_u$ with its linearisation, \emph{using the Eulerian velocity}. Along
$w_\tau=(1-\tau)u+\tau v$ set $T_\tau=\mathrm{id}+w_\tau$; the pushforward curve
$\tau\mapsto\nu_{w_\tau}=(T_\tau)_\#\mu_0$ solves the continuity equation
$\partial_\tau p_\tau+\nabla\!\cdot(p_\tau\,b_\tau)=0$ whose Eulerian velocity is the Lagrangian
velocity $\partial_\tau w_\tau=v-u$ \emph{evaluated at the current configuration},
$b_\tau=(v-u)\circ T_\tau^{-1}$ (writing $v-u$ directly in the Eulerian variable would drop this
composition). Integrating in $\tau$ and decomposing
$p_{w_\tau}\,b_\tau=(v-u)+R_\tau$ with
$R_\tau=(p_{w_\tau}-1)(v-u)+p_{w_\tau}\big[(v-u)\circ T_\tau^{-1}-(v-u)\big]$,
\[
  (p_v-p_u)+\nabla\!\cdot\!\big(\mu_0(v-u)\big)=-\!\int_0^1\nabla\!\cdot R_\tau\,\mathrm d\tau.
\]
As $v-u=\nabla(\rho_{\omega'}-\rho_\omega)$ is a gradient, the Neumann solution of
$-\nabla\!\cdot(\mu_0\nabla\Lambda)=\nabla\!\cdot(\mu_0(v-u))$ is $\Lambda=-(\rho_{\omega'}-\rho_\omega)$,
whence $\|\nabla\!\cdot(\mu_0(v-u))\|_{\dot H^{-1}(\mu_0)}=\|v-u\|_{L^2(\mu_0)}$ \emph{exactly}.
Both pieces of $R_\tau$ are supported on the $d_H$ differing cells: since $\Phi$ is compactly
supported in the open unit cube, every $T_\tau$ is the identity near each cell boundary and maps each
cell onto itself (\emph{cell preservation}), so $T_\tau^{-1}(y)$ lies in the same cell as $y$ and
$v-u$ vanishes off the differing cells, where $|v-u|\asymp a$, $\|\nabla(v-u)\|_\infty\asymp am$, and
$\|w_\tau\|_\infty\lesssim a$. Hence, for any $g$ with $\int|\nabla g|^2\mathrm d\mu_0\le1$:
\emph{(a) density remainder} ---
$\big|\int g\,\nabla\!\cdot\!\big((p_{w_\tau}-1)(v-u)\big)\big|=\big|\int\nabla g\cdot(p_{w_\tau}-1)(v-u)\big|
\le\|p_{w_\tau}-1\|_\infty\|v-u\|_{L^2(\mu_0)}\lesssim am\,\|v-u\|_{L^2(\mu_0)}$;
\emph{(b) composition remainder} --- on each differing cell
$|(v-u)\circ T_\tau^{-1}-(v-u)|\le\|\nabla(v-u)\|_\infty\,|T_\tau^{-1}-\mathrm{id}|\lesssim(am)\,a$ while
$|v-u|\asymp a$, so $\|(v-u)\circ T_\tau^{-1}-(v-u)\|_{L^2(\mu_0)}\lesssim am\,\|v-u\|_{L^2(\mu_0)}$ and the
$\dot H^{-1}(\mu_0)$-norm of $\nabla\!\cdot\!\big(p_{w_\tau}[\cdots]\big)$ is
$\lesssim am\,\|v-u\|_{L^2(\mu_0)}$. Combining, $\|p_u-p_v\|_{\dot H^{-1}(\mu_0)}\ge(1-C''am)\|v-u\|_{L^2(\mu_0)}$,
so $W_2(\nu_u,\nu_v)\ge c'\,\|u-v\|_{L^2(\mu_0)}$ with $c'=c''\big(1-C''c_0\big)>0$ for $c_0$ small.
Both remainders use only $\|\nabla(u-v)\|_\infty\asymp am$ and the constancy of $\mu_0$ --- no
spatial regularity of the reference beyond constancy.

For the packing $u-v=\nabla(\rho_\omega-\rho_{\omega'})$ has amplitude $\asymp a$ on the $d_H$ differing
cells, each of Lebesgue volume $m^{-d}$ (equal to its $\mu_0$-mass since $\mu_0\equiv1$), so
$\|u-v\|_{L^2(\mu_0)}\asymp a\sqrt{d_H/m^d}$.
\end{proof}

\begin{lemma}[Bump regularity]\label{lem:bump}
There is $c_0=c_0(\Phi,c,C)>0$ such that for $am\le c_0$: \emph{(i)} each $\nu_\omega$ has density in
$[c/2,2C]$ and $\mathrm{id}+\theta\nabla\rho_\omega$ is a diffeomorphism of $\Omega$ for all
$\theta\in[0,1]$; \emph{(ii)} the displacement interpolation is the $W_2$-geodesic and lies in the
regular class; \emph{(iii)} along it $\|\nabla_t^k v_t\|\le C_k\,W_2(\mu_0,\nu_\omega)/H^{k+1}$ with
$C_k=\|q_k^{(k+1)}\|_\infty$, so the path lies in $\Ck$ whenever
$W_2(\mu_0,\nu_\omega)\le r_H:=\varepsilon H^{k+1}/C_k\asymp_k\varepsilon H^{k+1}$.
\end{lemma}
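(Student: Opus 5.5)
The plan is to handle the three items in order, since each feeds the next. The main ingredients are the explicit form $T_\theta=\mathrm{id}+\theta\nabla\rho_\omega$, the smallness $\|\nabla^2\rho_\omega\|_\infty\lesssim am$, and the flat-profile computation already used in Lemma~\ref{lem:reach}.

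\emph{(i) Density bounds and diffeomorphism.} Write $\nabla^2\rho_\omega(x)=am\sum_c\omega_c(\nabla^2\Phi)(m(x-x_c))$. The bumps have disjoint supports, so $\|\nabla^2\rho_\omega\|_\infty\le am\|\nabla^2\Phi\|_\infty$. For $am\le c_0$ with $c_0\|\nabla^2\Phi\|_\infty\le\tfrac12$, the matrix $I+\theta\nabla^2\rho_\omega$ has all eigenvalues in $[1-\tfrac12,1+\tfrac12]$ for every $\theta\in[0,1]$. Hence $\psi_\theta=\tfrac12|x|^2+\theta\rho_\omega$ is uniformly convex, and its gradient $T_\theta$ is injective by strict monotonicity. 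Each $\Phi(m(\cdot-x_c))$ is compactly supported in the open subcube, so $T_\theta$ is the identity near every cell boundary and maps each closed cell into itself. By invariance of domain and degree, it is then onto each cell, and hence a diffeomorphism of $\Omega$. The change-of-variables formula gives density $1/\det(I+\theta\nabla^2\rho_\omega)\circ T_\theta^{-1}$. This lies in $[(3/2)^{-d},2^d]$, and shrinking $c_0$ further puts it inside $[1-C'am,1+C'am]\subset[c/2,2C]$, since $\mu_0\equiv1$ lies in the two-sided band.

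\emph{(ii) Geodesic and regularity.} Because $\psi_\omega$ is convex, Brenier's theorem makes $\nabla\psi_\omega$ the optimal map from $\mu_0$ to $\nu_\omega$. McCann's displacement interpolation $((1-\theta)\mathrm{id}+\theta\nabla\psi_\omega)_\#\mu_0=(T_\theta)_\#\mu_0$ is then the constant-speed geodesic. Each $\theta\rho_\omega$ is a $C_c^\infty$ potential with the same Hessian bound, so every interpolant belongs to $\mathcal F_{r_0}$ with $r_0=c_0\|\nabla^2\Phi\|_\infty$. Proposition~\ref{prop:regexists} then yields membership in the regular class: smooth reference-centred maps, star-closure, and transport-richness. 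No new argument is needed here beyond checking that the packing potentials are admissible.

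\emph{(iii) Covariant-derivative bound.} The path is $\mu_t=\gamma_{q_k(s(t))}$ with $s=(t-t_n+H)/H$, where $\gamma$ is the unit-time geodesic with velocity field $V$ and $\|V\|\equiv W_2(\mu_0,\nu_\omega)=:\delta$. The Eulerian velocity is $v_t=\tfrac{d}{dt}q_k(s(t))\,V_{\gamma}$. As in the proof of Lemma~\ref{lem:reach}, $\nabla_V V=0$ along the geodesic, so the chain rule kills every term involving a covariant derivative of $V$. Only the scalar derivatives survive, giving $\nabla_t^kv_t=H^{-(k+1)}q_k^{(k+1)}(s)V$, and therefore
$$\|\nabla_t^kv_t\|\le C_k\delta/H^{k+1}.$$
Outside the window the curve is constant: it equals $\mu_0$ before the window and is frozen at $\nu_\omega$ after it. The flatness $q_k^{(j)}(0)=q_k^{(j)}(1)=0$ for $1\le j\le k$ makes the glued curve $C^k$, with the bound holding in the ess-sup sense. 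Hence the path lies in $\Ck$ whenever $\delta\le\varepsilon H^{k+1}/C_k$.

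\emph{Main obstacle.} I expect the hard step to be justifying the chain rule in (iii) rigorously. To avoid the formal Otto calculus, I will work in Lagrangian coordinates. The flow map is $X_t=\mathrm{id}+q_k(s(t))\nabla\rho_\omega$, and the Lagrangian velocity is $\partial_tX_t=\dot q\,\nabla\rho_\omega$. The covariant derivative along a curve of smooth diffeomorphic pushforwards is the projection of the Lagrangian time derivative. Since $\partial_t^{j}X_t$ is always a scalar multiple of the fixed gradient field $\nabla\rho_\omega$, which is already a tangent (gradient) field, the projections are trivial. This gives $\|\nabla_t^kv_t\|_{\mu_t}=|\partial_t^{k+1}q_k(s)|\,\|\nabla\rho_\omega\|_{L^2(\mu_0)}$ exactly, and this equals the claimed bound.
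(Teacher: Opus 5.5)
Your proposal is correct and follows the paper's route: (i)--(ii) are the standard small-$C^2$ perturbation argument for Brenier maps (uniform convexity of $\tfrac12|x|^2+\theta\rho_\omega$, cell preservation, density $1/\det(I+\theta\nabla^2\rho_\omega)\circ T_\theta^{-1}$), and (iii) is the reparametrisation identity of Lemma~\ref{lem:reach} with $q_k$ in place of $s^{k+1}$; your Lagrangian computation is a more concrete version of the paper's formal $\nabla_VV=0$ chain rule. Two small repairs are needed: in (ii), do not cite Proposition~\ref{prop:regexists}, because the paper proves it by applying this very lemma to the bump family (your (i), together with the fact that $\theta\rho_\omega$ obeys the same Hessian bound, already gives star-closure and transport-richness directly); and in (iii), the projection is trivial because the Eulerian field $\nabla\rho_\omega\circ T_\theta^{-1}=\theta^{-1}\nabla\big(\tfrac12|y|^2-\psi_\theta^{*}\big)$, with $\theta=q_k(s)>0$, is a gradient in $L^2(\mu_t)$ --- not merely because $\nabla\rho_\omega$ is a gradient on $\mu_0$, since composing a gradient with an inverse diffeomorphism does not in general preserve gradients.
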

This is the standard small-$C^2$-perturbation argument for Brenier maps (cf.\ \cite{hutter,manole});
part \emph{(iii)} is the reparametrisation identity of Lemma~\ref{lem:reach} with the smoothstep
constant $C_k$.

\begin{lemma}[Hamming-localized density separation]\label{lem:kl}
For $am\le c_0$ and every $s\in[0,1]$,
\[
  \big\|\mu_s^{(\omega)}-\mu_s^{(\omega')}\big\|_{L^2(\Omega)}^2\ \le\ C\,q_k(s)^2\,a^2 m^2\,\frac{d_H(\omega,\omega')}{m^d},
\]
$C=C(\Phi,c,C)$.
\end{lemma}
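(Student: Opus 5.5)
We write the two intermediate densities explicitly and exploit cell preservation. Set $T^{\omega}_\theta=\mathrm{id}+\theta\nabla\rho_\omega$ with $\theta=q_k(s)\in[0,1]$. By Lemma~\ref{lem:bump}(i) it is a diffeomorphism of $\Omega$, and since $\mu_0\equiv1$ the change of variables gives
\[
  p^{\omega}_\theta(y)=\frac{1}{\det\big(I+\theta\nabla^2\rho_\omega\big)\big((T^\omega_\theta)^{-1}(y)\big)}.
\]
Because $\Phi$ is compactly supported in the open unit cube, $T^\omega_\theta$ is the identity near every cell boundary and maps each subcube onto itself, exactly as in the proof of Lemma~\ref{lem:sep}. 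On every cell where $\omega_c=\omega'_c$ the two maps coincide, so $p^\omega_\theta=p^{\omega'}_\theta$ there. The $L^2$ difference is therefore a sum over the $d_H(\omega,\omega')$ differing cells only, and it suffices to show that on each such cell $Q$,
\[
  \int_Q|p^\omega_\theta-p^{\omega'}_\theta|^2\,\mathrm dy\lesssim \theta^2a^2m^2\,m^{-d}.
\]

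For the per-cell estimate we use the crude bound $|p^\omega_\theta-p^{\omega'}_\theta|\le|p^\omega_\theta-1|+|p^{\omega'}_\theta-1|$. It is enough to show $\|p^\omega_\theta-1\|_\infty\lesssim\theta\,am$. On the cell, $\nabla^2\rho_\omega(x)=am\,\omega_c\nabla^2\Phi(m(x-x_c))$, so $\|\theta\nabla^2\rho_\omega\|_\infty\le\theta\,am\|\nabla^2\Phi\|_\infty\le c_0\|\nabla^2\Phi\|_\infty$. For $c_0$ small the map $A\mapsto1/\det(I+A)$ is Lipschitz near $A=0$ with constant depending on $d$. Hence $|p^\omega_\theta(y)-1|\le C_d\,\theta\,am\|\nabla^2\Phi\|_\infty$ uniformly in $y$; composing with $(T^\omega_\theta)^{-1}$ does not change a sup-norm bound. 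Squaring and integrating over a cell of volume $m^{-d}$ gives $\lesssim\theta^2a^2m^2m^{-d}$. Summing over the $d_H$ differing cells and substituting $\theta=q_k(s)$ yields the claim with $C=C(\Phi,d)$. The lower density bound $[c/2,2C]$ from Lemma~\ref{lem:bump} is only needed to keep the determinant away from zero, which is where the dependence on $c,C$ enters.

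The main obstacle is the linear-in-$\theta$ factor: a naive Lipschitz bound must be taken around the identity ($\theta=0$, where $p\equiv1$) and not around a fixed $\theta$. This is exactly what the mean-value argument above does, since the matrix argument is $\theta\nabla^2\rho_\omega$ and it vanishes at $\theta=0$. The only other delicate point is cell preservation, which guarantees that agreeing cells contribute exactly zero rather than a leakage term. It follows from the compact support of $\Phi$ and from $\|\theta\nabla\rho_\omega\|_\infty\lesssim a$ being smaller than the margin of $\operatorname{supp}\Phi$ scaled by $1/m$, which the condition $am\le c_0$ ensures.
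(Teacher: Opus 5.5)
Your proof is correct and follows the paper's argument essentially step for step. Cell preservation plus the constancy of $\mu_0$ localizes $p^\omega_\theta-p^{\omega'}_\theta$ to the $d_H(\omega,\omega')$ differing cells, and the expansion $1/\det(I+\theta\nabla^2\rho_\omega)=1+O(\theta am)$ gives the pointwise $\theta am$ bound on each cell of volume $m^{-d}$. Your explicit justification of cell preservation is a welcome detail that the paper only asserts: the displacement is $\lesssim a$, which lies below the $1/m$-scaled support margin of $\Phi$ once $am\le c_0$.
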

\begin{proof}
Since $\Phi$ is compactly supported in the open unit cube, $\nabla\Phi$ vanishes in a neighbourhood
of each cell boundary; as $T^{(s)}_\omega:=\mathrm{id}+q_k(s)\nabla\rho_\omega$ is a diffeomorphism
equal to the identity there, it maps each cell onto itself (cell preservation). Hence the inverse
$x=(T^{(s)}_\omega)^{-1}(y)$ lies in the same cell as $y$, where both $\nabla\rho_\omega$ and
$\nabla^2\rho_\omega$ depend only on $\omega_c$. \emph{Because $\mu_0\equiv1$ is constant the
numerator of the pushforward density carries no spatial dependence}:
$p_s^{(\omega)}(y)=1/\det\!\big(I+q_k(s)\nabla^2\rho_\omega(x)\big)$ depends, on cell $c$, only on
$\omega_c$. In particular, for two codewords $\omega,\omega'$ the otherwise-delicate numerator
difference $\mu_0(x_\omega)-\mu_0(x_{\omega'})$ (which for a merely bounded reference would require
Lipschitz/Sobolev control) \emph{vanishes identically}, so $p_s^{(\omega)}-p_s^{(\omega')}$ is
supported on the $d_H$ differing cells and is controlled by the determinant alone. There
$\nabla^2\rho$ has amplitude $am$ (since $\nabla^2[\tfrac{a}{m}\Phi(m\cdot)]=am\,\Phi''(m\cdot)$), and
with $q_k(s)\,am\le c_0$ the determinant expands as $1+O(q_k(s)\,am)$, so
$|p_s^{(\omega)}-p_s^{(\omega')}|\lesssim q_k(s)\,am$ over Lebesgue volume $m^{-d}$ per cell (its
$\mu_0$-mass, $\mu_0\equiv1$). Only this upper bound is needed (the matching $W_2$ \emph{lower}
separation is Lemma~\ref{lem:sep}). Squaring and summing over the $d_H$ differing cells gives the
stated bound.
\end{proof}

\paragraph{KL of the snapshot experiment.} Write $\delta p_s^{(\omega)}=\mu_s^{(\omega)}-\mu_0$. A
change of variables gives $\delta p_s^{(\omega)}=-q_k(s)\,\nabla\!\cdot(\mu_0\nabla\rho_\omega)+R_s^{(\omega)}$
with $\|R_s^{(\omega)}\|_{L^2}\lesssim (am)^2$. Since all densities are $\ge c/2$ and $am\le c_0$, a
fixed constant $C$ (depending only on $c_0,\Phi,c$) bounds
\[
  D_{\mathrm{KL}}\big(\mu_s^{(\omega)}\,\big\|\,\mu_s^{(\omega')}\big)
  \le \frac{1}{c}\int_\Omega\big(\mu_s^{(\omega)}-\mu_s^{(\omega')}\big)^2
  \le C\,q_k(s)^2\,\mathcal I(\omega,\omega'),\qquad
  \mathcal I(\omega,\omega'):=a^2 m^2\,\frac{d_H(\omega,\omega')}{m^d}
\]
(the $L^2$ separation is Lemma~\ref{lem:kl}; a constant upper bound, not a leading-order equivalence,
is all Fano needs, so the $am=O(1)$ remainder is absorbed into $C$). Summing the $N$ draws at each of
the $n_H$ snapshots, the pairwise KL of the full snapshot experiment is
\[
  D_{\mathrm{tot}}(\omega,\omega'):=\sum_i N\,D_{\mathrm{KL}}\big(\mu_{s_i}^{(\omega)}\|\mu_{s_i}^{(\omega')}\big)
  \ \lesssim_k\ M^{\mathrm{eff}}_{H,k}\,\mathcal I(\omega,\omega'),
  \qquad
  M^{\mathrm{eff}}_{H,k}:=N\!\!\sum_{i:\,t_i\in[t_n-H,t_n]}\!\! q_k(s_i)^2,
\]
the \emph{design-weighted} effective sample size. Since $0\le q_k\le1$ we have
$M^{\mathrm{eff}}_{H,k}\le Nn_H=M_H$ for every design, and the endpoint snapshot alone ($s_i=1$,
$q_k(1)=1$) forces $M^{\mathrm{eff}}_{H,k}\ge N$. The smoothstep concentrates information near the
endpoint ($q_k(s_i)\approx0$ for small $s_i$), so a design clustered at the window start carries
strictly less information than its raw count $M_H$ suggests --- the shape-channel analogue of the
location-channel leverage of Theorem~\ref{thm:sharp}, with $\sum_i q_k(s_i)^2$ in place of
$\sum_i(t_i-t_n)^{2k}$. \emph{For the equispaced design} the Riemann sum gives
$\sum_i q_k(s_i)^2\approx n_H\!\int_0^1 q_k(s)^2\,\mathrm ds=n_H\,c_{q,k}$ with $c_{q,k}\in(0,1)$ a
$k$-constant, so $M^{\mathrm{eff}}_{H,k}\asymp_k M_H$; for a general design only the inequality
$M^{\mathrm{eff}}_{H,k}\le M_H$ is used.

\begin{proposition}[Local minimax lower bound over the reachable ball]\label{prop:reduction}
Under Definition~\ref{def:reg}, with the reachable ball $\mathcal B_H=\{\nu:W_2(\mu_0,\nu)\le r_H\}$,
$r_H\asymp_k\varepsilon H^{k+1}$, and the design-weighted effective sample size $M^{\mathrm{eff}}_{H,k}$
of the window experiment above,
\[
  \inf_{\hat\nu}\ \sup_{\mu_\bullet\in\Ck:\,\mu_{t_n}\in\mathcal B_H}\ \E\,W_2(\hat\nu,\mu_{t_n})
  \ \gtrsim_k\ \min\!\big(r_H,\ c\,(M^{\mathrm{eff}}_{H,k})^{-\gamma_d}\big),
  \qquad\gamma_d=\min(1/d,1/2).
\]
For the equispaced design $M^{\mathrm{eff}}_{H,k}\asymp_k M_H\asymp MH/L$ (for $H\ge\Delta$), recovering
the closed-form rate of Corollary~\ref{cor:denserate}.
\end{proposition}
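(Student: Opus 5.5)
The argument is a standard Fano bound over the Varshamov--Gilbert subfamily $\mathcal W$ of the transport-map packing. The two scales $a$ (amplitude) and $m$ (number of cells per axis) are tuned so that three things hold at once: the codewords stay inside the reachable ball $\mathcal B_H$, they are $W_2$-separated by $\asymp a$, and the pairwise KL of the \emph{full} snapshot experiment is at most a small multiple of $\log|\mathcal W|\asymp m^d$.

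For each $\omega\in\mathcal W$ the path is $\mu_0$ before $t_n-H$, then $\mu^{(\omega)}_{s}$ on the smoothstep schedule, then frozen at $\nu_\omega$. By Lemma~\ref{lem:bump}(iii) this path lies in $\Ck$ provided $W_2(\mu_0,\nu_\omega)\asymp a\le r_H$ and $am\le c_0$. The target is then $\mu_{t_n}=\nu_\omega$, and Lemma~\ref{lem:sep} with pairwise $d_H\ge m^d/8$ gives $W_2(\nu_\omega,\nu_{\omega'})\ge 2s_\star$ with $s_\star\asymp a$.

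By the snapshot KL computation above,
\[
D_{\mathrm{tot}}(\omega,\omega')\lesssim_k M^{\mathrm{eff}}_{H,k}\,a^2m^2,
\]
since $d_H/m^d\le1$. Fano's inequality, after the usual reduction of estimation to testing (project $\hat\nu$ to the nearest $\nu_\omega$), gives
\[
\inf_{\hat\nu}\max_{\omega}\E\,W_2(\hat\nu,\nu_\omega)\ \gtrsim\ a\Big(1-\frac{\max D_{\mathrm{tot}}+\log2}{m^d\log 2/8}\Big),
\]
so it suffices to impose $M^{\mathrm{eff}}_{H,k}a^2m^2\le c_1 m^d$ with $c_1$ small and $m^d\ge 16$ (say).

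The parameter choice splits by dimension.

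\textbf{Case $d\ge3$.} Saturate the regularity constraint with $a=c_0/m$. The KL condition becomes $M^{\mathrm{eff}}_{H,k}c_0^2\le c_1m^d$, so take $m\asymp (M^{\mathrm{eff}}_{H,k})^{1/d}$. This gives $a\asymp (M^{\mathrm{eff}}_{H,k})^{-1/d}$, the spatial rate. If this $a$ exceeds $r_H$, instead set $a=r_H$ and keep $m$ as large as the KL condition allows. Concretely, take $m$ with $m^{d-2}\asymp M^{\mathrm{eff}}_{H,k}r_H^2/c_1$. This is admissible because $r_H<(M^{\mathrm{eff}})^{-1/d}$ implies $am=r_Hm\lesssim c_0$. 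The bound is then $\gtrsim r_H$.

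\textbf{Case $d\le2$.} The exponent $2/d\ge1$ suggests a different regime: use a single cell or a fixed number of cells, $m=m_0$ constant. Fano (or Le Cam two-point, $|\mathcal W|\ge2$) with $M^{\mathrm{eff}}a^2m_0^2\lesssim1$ gives $a\asymp (M^{\mathrm{eff}}_{H,k})^{-1/2}$, capped by $r_H$. This is consistent with $\gamma_d=1/2$ and with the location channel of Theorem~\ref{thm:sharp}.

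In both cases the result is $\gtrsim_k\min(r_H,(M^{\mathrm{eff}}_{H,k})^{-\gamma_d})$. For the equispaced remark, substitute the Riemann-sum estimate $M^{\mathrm{eff}}_{H,k}\asymp_k MH/L$ already recorded above.

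The step I expect to need the most care is the KL bound itself. That bound requires the $\chi^2$-type inequality $D_{\mathrm{KL}}\le c^{-1}\|\cdot\|_{L^2}^2$ uniformly in $s$, which relies on the lower density bound from Lemma~\ref{lem:bump}(i). It also requires that $D_{\mathrm{tot}}$ sums only over in-window snapshots: pre-window snapshots are identical across codewords, contributing $q_k(s_i)=0$, and no snapshot lies after $t_n$, so freezing costs nothing. One must also check that the admissibility condition $am\le c_0$ is compatible with the capped choice $a=r_H$ in the $d\ge3$ branch; I would verify this case by case as sketched. Finally, the displayed risk concerns $\mu_{t_n}$, and on the frozen subfamily this coincides with $\mu_{t_n+h}$, as used in Theorem~\ref{thm:unified}.
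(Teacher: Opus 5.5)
Your proposal is correct and follows the paper's proof essentially step for step: Fano over the Varshamov--Gilbert transport packing, the full-window bound $D_{\mathrm{tot}}\lesssim_k M^{\mathrm{eff}}_{H,k}a^2m^2$, and the dimension split ($am\asymp c_0$ with $m\asymp(M^{\mathrm{eff}}_{H,k})^{1/d}$ for $d\ge3$, versus $m\asymp1$ for $d\le2$), capped by $r_H$. Your explicit handling of the capped branch $a=r_H$ for $d\ge3$ is more careful than the paper's one-line maximization, with one correction: for $d>2$ the KL constraint bounds $m$ from below and it is $am\le c_0$ that bounds it from above, so ``as large as the KL condition allows'' should read ``as small as''; your concrete choice of $m$ and your admissibility check are right.
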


\begin{proof}
Use the packing $\{\nu_\omega\}_{\omega\in\mathcal W}$ at the largest admissible amplitude $a$. Two
constraints bound $a$: the path must lie in $\Ck$, i.e.\ $a\lesssim r_H\asymp_k\varepsilon H^{k+1}$
(Lemma~\ref{lem:bump}(iii)), and the map must stay monotone, $am\le c_0$. Pairwise separation is
$\gtrsim a$ (Varshamov--Gilbert, $d_H\ge m^d/8$), and since $d_H\le m^d$ the total KL obeys
$D_{\mathrm{tot}}\lesssim_k M^{\mathrm{eff}}_{H,k}\,a^2m^2$. Fano's inequality (\cite{tsybakov}, Thm.~2.5)
gives a lower bound of order the separation once
$D_{\mathrm{tot}}+\log2\le\tfrac12\log|\mathcal W|\asymp m^d$, i.e.
\[
  M^{\mathrm{eff}}_{H,k}\,a^2m^2\ \lesssim_k\ m^d
  \quad\Longleftrightarrow\quad
  a^2\ \lesssim_k\ \frac{m^{d-2}}{M^{\mathrm{eff}}_{H,k}}.
\]
Maximising $a$ over $m$ subject to $am\le c_0$: for $d\ge3$ the binding choice is
$m\asymp(M^{\mathrm{eff}}_{H,k})^{1/d}$, giving $a\asymp(M^{\mathrm{eff}}_{H,k})^{-1/d}$; for $d\le2$ the
optimum is at $m\asymp1$, where the packing reduces to a \emph{fixed finite} (Varshamov--Gilbert)
hypothesis set and the bound is equivalently a Le~Cam/finite-Fano two-point-type argument, giving the
parametric $a\asymp(M^{\mathrm{eff}}_{H,k})^{-1/2}$ (the critical $d=2$ logarithmic factor is \emph{not}
produced by this single-scale packing and is not claimed here). Thus the largest separation a
$\Ck$-admissible packing supports is $\asymp_k\min(r_H,\,(M^{\mathrm{eff}}_{H,k})^{-\gamma_d})$, and the
minimax error is $\gtrsim_k$ this value.
\end{proof}

\paragraph{From the ball to the forecast.} Because the smoothstep $q_k$ is $C^k$-flat at $s=1$ (all
derivatives up to order $k$ vanish at $t_n$), the constant continuation $\mu_t^{(\omega)}=\nu_\omega$
for $t\ge t_n$ is a genuine $C^k$ extension --- no velocity jump --- and keeps the path in $\Ck$.
Hence $\mu_{t_n+h}^{(\omega)}=\nu_\omega$, so forecasting $\mu_{t_n+h}$ over this sub-family is exactly
estimating $\nu_\omega\in\mathcal B_H$, and Proposition~\ref{prop:reduction} lower-bounds the forecast
risk by the spatial term of Theorem~\ref{thm:unified}. Combined with the disjoint exact-past floor of
Theorem~\ref{thm:lecam} (location channel) and optimised over $H$, this yields the stated
$\gtrsim_k\varepsilon(h+H_\#)^{k+1}$. The packing lives in the class by the local transport-richness of
Definition~\ref{def:reg}; Proposition~\ref{prop:reduction} is thus the promised local
statistical-richness bound, derived from regularity rather than assumed.

\begin{remark}[The lower bound is unconditional; consolidated constants]\label{rem:lbconstants}
Unlike the matching upper bound for $k\ge1$ (Proposition~\ref{prop:covariant}, conditional on the
estimates (C),(S) of Appendix~\ref{app:covariant}), the lower bound proved here invokes no
unverified hypothesis beyond Definition~\ref{def:reg}. It uses only: the constant reference
$\mu_0\equiv1$; a single compactly supported bump $\Phi$; Peyr\'e's non-asymptotic
$W_2$--$\dot H^{-1}$ comparison \cite{peyre}; the Varshamov--Gilbert and Fano inequalities
\cite{tsybakov}; and the Fournier--Guillin/Weed--Bach empirical-$W_2$ rate \cite{fg,weedbach}. The
chain of constants is explicit and finite at each fixed $k$: $c_0=c_0(\Phi,d)$ (monotonicity,
Lemma~\ref{lem:bump}), $c''=(2(1+C'c_0))^{-1/2}$ (Peyr\'e), $c'=c''(1-C''c_0)>0$ (separation,
Lemma~\ref{lem:sep}), $C_k=\|q_k^{(k+1)}\|_\infty$ and $c_{q,k}=\int_0^1q_k(s)^2\,\mathrm ds\in(0,1)$
(smoothstep schedule); none vanishes or diverges at fixed $k$. Consequently the design-dependent
lower bound (with the weighted $M^{\mathrm{eff}}_{H,k}$) holds \emph{unconditionally} for every
$k\ge0$ and \emph{arbitrary} observation design; its closed-form exponent
$\gamma_d(k+1)/(k+1+\gamma_d)$ and the floor $\varepsilon(h+H_\#)^{k+1}$ additionally use the
equispaced dense design (Corollary~\ref{cor:denserate}, where $M^{\mathrm{eff}}_{H,k}\asymp_k M_H$).
With the $k=0$ matching upper bound (Theorem~\ref{thm:upper0}) the
characterization is tight at $k=0$; the general-$k$ upper bound is the sole remaining gap
(Conjecture~\ref{conj:upper}).
\end{remark}

\section{A degree-$k$ tangent-space forecaster behind Conjecture~\ref{conj:upper}}\label{app:forecaster}

This appendix makes Conjecture~\ref{conj:upper} concrete: we give an explicit degree-$k$
forecaster on $\PP$, decompose its error, prove the parts that are unconditional ---
recovering the rate at $k=0$ and on flat submodels --- and isolate what remains open as two
named estimates, \textup{(C)} and \textup{(S)}.

\subsection{Construction}
Write $s=t-t_n$, so the window is $s\in[-H,0]$ and the target is $s=h>0$. Fix a kernel $K$
supported on $[-1,0]$ and set $K_H(s)=K(s/H)$. The $N$ draws at each snapshot are split into two
folds $\mathcal D_0,\mathcal D_1$ (sample splitting).

\emph{Step 1 (base point).} From $\mathcal D_0$ form an estimate $\bar\mu$ of $\mu_{t_n}$ --- the
kernel-weighted Wasserstein barycenter of the windowed empirical snapshots, or simply the
empirical measure of the snapshot nearest $t_n$.

\emph{Step 2 (chart coordinates).} For each windowed snapshot $t_i$, using $\mathcal D_1$,
estimate the optimal transport map $\hat T_i$ from $\bar\mu$ to the empirical measure
$\hat\mu_{t_i}$, and set the \emph{log coordinate} $\hat U_i:=\hat T_i-\mathrm{id}\in
L^2(\bar\mu;\R^d)$. Its population version is $U(s_i):=\mathrm{Log}_{\bar\mu}\mu_{t_n+s_i}
=T^{\bar\mu\to\mu_{t_n+s_i}}-\mathrm{id}$.

\emph{Step 3 (degree-$k$ regression in the chart).} In the Hilbert space $L^2(\bar\mu;\R^d)$ solve
\[
   (\hat A_0,\dots,\hat A_k)=\arg\min_{A_j\in L^2(\bar\mu)}\ \sum_i K_H(s_i)\,
   \Big\|\hat U_i-\textstyle\sum_{j=0}^k A_j\,s_i^{\,j}\Big\|_{L^2(\bar\mu)}^2 .
\]
This separates over $\bar\mu$-a.e.\ $x$ into scalar degree-$k$ local polynomial regressions of
$\{\hat U_i(x)\}$ on $\{s_i\}$, so $\hat A_j(x)=\sum_i \omega_j(s_i)\,\hat U_i(x)$ with the usual
local-polynomial weights $\omega_j$ (the same for every $x$, depending only on $\{s_i\}$ and $K_H$).

\emph{Step 4 (extrapolate and lift).} Evaluate the fitted polynomial at $s=h$,
$\hat U_\star=\sum_{j=0}^k \hat A_j\,h^{j}$, and output
$\hat\nu=\mathrm{Exp}_{\bar\mu}(\hat U_\star)=(\mathrm{id}+\hat U_\star)_\#\bar\mu$.
At $k=0$ the regression returns the kernel-weighted average of the coordinates (a
transport-barycentric persistence); the mixture forecaster of Theorem~\ref{thm:upper0} is an
equally valid degree-$0$ rule and is the one analyzed there.

\subsection{Error decomposition}
Since the Brenier map from $\bar\mu$ to the nearby target exists on the regular class,
$\mathrm{Exp}_{\bar\mu}U(h)=\mu_{t_n+h}$. The exponential is $1$-Lipschitz from $L^2(\bar\mu)$ to
$(\PP,W_2)$ --- $W_2((\mathrm{id}+a)_\#\bar\mu,(\mathrm{id}+b)_\#\bar\mu)\le\|a-b\|_{L^2(\bar\mu)}$,
the two maps coupling the measures --- so
\[
   W_2(\hat\nu,\mu_{t_n+h})\ \le\ \|\hat U_\star-U(h)\|_{L^2(\bar\mu)}
   \ \le\ \underbrace{\big\|\E\hat U_\star-U(h)\big\|}_{\text{(B) in-chart bias}}
   \ +\ \underbrace{\big\|\hat U_\star-\E\hat U_\star\big\|}_{\text{(V) variance}},
\]
expectations over $\mathcal D_1$ given $\bar\mu$ (the folds are independent by Step~1).

\subsection{The in-chart bias (B): unconditional given chart smoothness}
If $s\mapsto U(s)\in C^{k+1}([-H,h];L^2(\bar\mu))$ with $\sup_s\|U^{(k+1)}(s)\|_{L^2(\bar\mu)}
\le\varepsilon'$, the standard local-polynomial remainder (Fan--Gijbels~\cite{fangijbels}), applied $\bar\mu$-pointwise
and integrated, gives
\[
   \big\|\E\hat U_\star-U(h)\big\|_{L^2(\bar\mu)}\ \le\ C_{k,K}\,\varepsilon'\,(h+H)^{k+1}.
\]
This is rigorous: it uses only the chart-curve smoothness $\varepsilon'$ and boundedness of the
weights $\omega_j$ for $h\lesssim H$ (a well-conditioned design Gram matrix, as in
Corollary~\ref{cor:leverage}).

\subsection{The two open estimates}
\paragraph{(C) Chart stability (curvature).} The in-chart derivative $\partial_s^{k+1}U$ differs
from the intrinsic covariant derivative $\nabla_t^k v$ by terms involving the curvature of $\PP$
contracted with lower-order velocities (Gigli's second-order calculus). On flat submodels ---
translation (Lemma~\ref{lem:embed}) and Gaussian/Bures families, where $\bar\mu$-geodesics are
affine --- $\mathrm{Log}_{\bar\mu}$ is an isometry on the window, so $\partial_s^{k+1}U=\nabla_t^k v$
and $\varepsilon'=\varepsilon$ exactly; Proposition~\ref{prop:sharp} shows the leading correction
otherwise enters only at order $h^{k+3}$ for $k\le2$.
\emph{Assumption (C): on the regular class $\sup_s\|U^{(k+1)}(s)\|_{L^2(\bar\mu)}\le c_1\varepsilon$.}

\paragraph{(S) Chart estimation rate.} The coordinate $\hat U_i$ requires estimating the Brenier
map $\bar\mu\to\mu_{t_i}$ from $N$ samples. On the regular class, plug-in and entropic map
estimators converge in $L^2(\bar\mu)$ at the empirical-measure scale (H"utter--Rigollet~\cite{hutter};
Manole et al.~\cite{manole}; Pooladian--Niles-Weed~\cite{pooladian}), and sample splitting makes $\bar\mu\perp\{\hat U_i\}$.
As $\hat U_\star=\sum_{i,j}\omega_j(s_i)h^j\hat U_i$ is a fixed bounded-weight linear combination
pooling $M_H\asymp MH/L$ samples,
\[
   \E\big\|\hat U_\star-\E\hat U_\star\big\|_{L^2(\bar\mu)}\ \lesssim\ (MH/L)^{-\gamma_d},
   \qquad \gamma_d=\min(1/d,1/2).
\]
\emph{Assumption (S): the displayed variance bound holds on the regular class.}

\subsection{Conclusion}
Under \textup{(C)} and \textup{(S)},
$\E\,W_2(\hat\nu,\mu_{t_n+h})\lesssim\varepsilon(h+H)^{k+1}+(MH/L)^{-\gamma_d}$, and optimizing
$H$ as in Theorem~\ref{thm:upper0} yields the unified rate of Theorem~\ref{thm:unified}; the
forecaster is then minimax-rate-optimal, which \emph{would} establish Conjecture~\ref{conj:upper}
(stated as the conditional Proposition~\ref{prop:covariant}). Both estimates
hold unconditionally at $k=0$ --- \textup{(C)} is the $1$-Lipschitz drift bound $W_2(\mu_t,
\mu_{t_n+h})\le\varepsilon(h+H)$ and \textup{(S)} the empirical-$W_2$ rate
(Theorem~\ref{thm:upper0}) --- and for all $k$ on flat/Gaussian submodels (chart isometric, maps
affine and estimable at $N^{-\gamma_d}$). The residual content of the conjecture is therefore
exactly \textup{(C)} for $k\ge1$ in the curved regime, a quantitative second-order Otto-calculus
estimate, and \textup{(S)}, an optimal-transport map-estimation rate matching the empirical-measure
exponent --- both of a kind studied in the literature, though not, to our knowledge, in the combined
window-regression form required here. Appendix~\ref{app:covariant} carries out that combination,
giving an explicit covariant (development-based) forecaster and reducing the $k\ge1$ bound to
\textup{(C)} and \textup{(S)} stated as assumptions; establishing them unconditionally on the regular
class remains open.

\section{A covariant forecaster for the general-$k$ upper bound}\label{app:covariant}
Appendix~\ref{app:forecaster} reduced Conjecture~\ref{conj:upper} to the chart-stability
estimate~(C) and the map-estimation rate~(S). This appendix gives an explicit \emph{covariant}
forecaster and reduces the $k\ge1$ matching upper bound to two clean estimates, which we state as
\textbf{Assumptions~(C) and~(S)} below; we verify them at $k=0$ and on flat/Gaussian submodels, and
the resulting bound (Proposition~\ref{prop:covariant}) is therefore \emph{conditional} on these
assumptions for $k\ge1$. We do not claim to establish them unconditionally on the regular class: a
caveat is in order, since $(\PP,W_2)$ is not a finite-dimensional smooth manifold, and the global
existence, regularity, and curvature bounds underlying the development calculus we use are themselves
nontrivial (see \cite{ags,gigli} for the available second-order structure and
Section~\ref{ssec:covscope} for what remains open). The construction nonetheless makes the obstruction
to~(C) precise --- it is genuine for the \emph{naive} in-chart forecaster --- and identifies the
covariant remedy. We keep the notation of Appendix~\ref{app:forecaster}: $\bar\mu$ the in-window
reference, $U=\mathrm{Log}_{\bar\mu}(\cdot)$ the chart, $v=\dot\gamma$, and $\nabla_t$ the
Levi-Civita covariant derivative on $(\PP,W_2)$ (Otto calculus; Gigli's second-order
structure~\cite{gigli}). Independently of the conditional rate, two ingredients here are exact and
self-contained and may be read on their own: the development jet identity~\eqref{eq:devjet} and the
order-$(k{+}1)$ curvature cancellation (Remark~\ref{rem:naivefails}), which pinpoints \emph{why} a
naive in-chart extrapolator fails and what a covariant one must cancel.

\subsection{The covariant forecaster}\label{ssec:covdef}
Let $\widetilde\gamma:[0,h]\to T_{\bar\mu}\PP$ be the Cartan development of $\gamma$, i.e.\
$\widetilde\gamma\,'(s)=P_{0\leftarrow s}\dot\gamma(s)$ with $P_{0\leftarrow s}$ parallel transport
along $\gamma$. Using $\tfrac{d}{ds}\big(P_{0\leftarrow s}X(s)\big)=P_{0\leftarrow s}\nabla_t X(s)$ and
$P_{0\leftarrow0}=\mathrm{id}$ gives, for all $j\ge1$,
\begin{equation}\label{eq:devjet}
   \widetilde\gamma^{(j)}(0)=\nabla_t^{\,j-1}v\big|_{0}.
\end{equation}
The \emph{degree-$k$ covariant forecaster} extrapolates the development by its degree-$k$ Taylor
polynomial and maps back through the anti-development:
\[
   \widehat\nu \;=\; \mathrm{antidev}\!\Big(s\mapsto\textstyle\sum_{j=1}^{k}\tfrac{s^{j}}{j!}\,
   \widehat{\nabla_t^{\,j-1}v}\Big)\Big|_{s=h},
\]
the covariant derivatives $\widehat{\nabla_t^{\,j-1}v}$ being estimated by the tangent-bundle
regression of Appendix~\ref{app:forecaster}. Expanding the anti-development in the chart yields the
explicit form
$\widehat U_{\mathrm{cov}}(h)=\sum_{j=1}^{k}\tfrac{h^{j}}{j!}\widehat{\nabla_t^{\,j-1}v}
+\sum_{i\ge k+1}h^{i}c_i$, whose corrections $c_i$ are built from the estimated jet and the
curvature of $\PP$ at $\bar\mu$~\cite{gigli}; the leading correction is at order $h^{k+1}$. For
$k=3$ it equals $-\tfrac{h^{4}}{24}R(v,\nabla_t v)v$, the order-$4$ coefficient of $U\circ\gamma$
minus $\nabla_t^{3}v$.

\subsection{Bias: the estimate (C)}\label{ssec:covbias}
The bias splits into the development Taylor remainder --- exact and curvature-free in
$T_{\bar\mu}\PP$ --- and the Lipschitz stability of the anti-development. We isolate the geometric
content as an explicit assumption and indicate the candidate argument, which we do \emph{not} claim
to be rigorous in the infinite-dimensional setting (Section~\ref{ssec:covscope}).

\begin{assumption}[Comparison-geometry estimate (C)]\label{ass:C}
On the regular class the Cartan development and anti-development of admissible curves exist and are
unique, the chart $\mathrm{Log}_{\bar\mu}$ and the Brenier maps from the barycenter $\bar\mu$ to the
snapshots are $C^k$ (Caffarelli regularity --- the pairwise/chart smoothness deliberately excluded
from Definition~\ref{def:reg}), the relevant variation family is differentiable, the sectional
curvature of $(\PP,W_2)$ along the spanned $2$-planes lies in a fixed $[0,\bar\kappa]$ with
$\bar\kappa<\infty$, and the anti-development is $C_{\mathrm{geo}}$-Lipschitz from development curves
(sup-norm) to $W_2$ with $C_{\mathrm{geo}}=1+O(\bar\kappa\|v\|^2h^2)$ bounded for every $h$.
\end{assumption}

\noindent\textbf{Candidate argument (non-rigorous).} Linearise the anti-development along
$\sigma_\lambda=\widetilde\gamma+\lambda\big(T_k[\widetilde\gamma]-\widetilde\gamma\big)$; the
variation field $J_\lambda$ formally solves a forced Jacobi equation
$\nabla_s^{2}J_\lambda+R(J_\lambda,\dot c)\dot c=u_\lambda\ddot\delta+\mathcal H_\lambda$ with a
holonomy term $\mathcal H_\lambda$ from the frame variation. For $\sec\ge0$ (global on $\PP$ for a
Euclidean base~\cite{ags,villani}) the homogeneous Jacobi Green's function is bounded by the flat
$(s-\tau)$ \emph{with no conjugate-point restriction} --- conjugate points obstruct the two-point
boundary problem, not this forced initial-value problem --- suggesting a contraction
$C_{\mathrm{geo}}=1-O(\bar\kappa\|v\|^2h^2)$, with the holonomy contributing a higher-order
$O(\bar\kappa\|v\|\,\|\nabla_t v\|h^3)$ that is dominated in the slow-variation regime
$\|\nabla_t v\|h\lesssim\|v\|$. Turning this into a theorem requires the well-posedness and uniform
curvature bound of Assumption~\ref{ass:C} and an operator-valued Rauch comparison in $\PP$, none of
which we establish; hence (C) is an assumption, not a lemma.

\begin{proposition}[Bias under (C)]\label{prop:covbias}
Let $k\ge1$ and grant Assumption~\ref{ass:C}. If
$\sup_{[0,h]}\|\nabla_t^{k}v\|_{L^2(\bar\mu)}\le\varepsilon$, then
$W_2\big(\widehat\nu,\mu_{t_n+h}\big)\le C_{\mathrm{geo}}\,\varepsilon\,h^{k+1}/(k+1)!$.
\end{proposition}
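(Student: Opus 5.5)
The plan is to write both the target and the forecast as anti-developments of curves in the single Hilbert space $T_{\bar\mu}\PP$, compare the two development curves there, and then transfer that comparison back to $W_2$ using the Lipschitz bound for the anti-development in Assumption~\ref{ass:C}.

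\emph{Step 1: represent the target.} By Assumption~\ref{ass:C} the Cartan development $\widetilde\gamma$ of the true curve $\gamma$ on $[0,h]$ exists and is unique, starting at $\bar\mu$, here identified with the base point at $t_n$. Its anti-development returns $\gamma$, so $\mathrm{antidev}(\widetilde\gamma)|_{s=h}=\mu_{t_n+h}$.

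\emph{Step 2: Taylor-expand the development.} Apply the jet identity~\eqref{eq:devjet}. This gives $\widetilde\gamma^{(j)}(0)=\nabla_t^{j-1}v|_0$ for $j\ge1$, and the same identity at every $s$ gives $\widetilde\gamma^{(k+1)}(s)=P_{0\leftarrow s}\nabla_t^{k}v(s)$. Parallel transport is an isometry $L^2(\mu_s)\to L^2(\bar\mu)$, so $\|\widetilde\gamma^{(k+1)}(s)\|\le\varepsilon$ on $[0,h]$. The development is a curve in a Hilbert space, so Taylor's theorem with integral remainder holds there without curvature terms. It bounds the distance between $\widetilde\gamma$ and its degree-$k$ Taylor polynomial $T_k[\widetilde\gamma](s)=\sum_{j=1}^k\frac{s^j}{j!}\nabla_t^{j-1}v|_0$:
\[
\sup_{s\in[0,h]}\big\|\widetilde\gamma(s)-T_k[\widetilde\gamma](s)\big\|_{L^2(\bar\mu)}\le\frac{\varepsilon h^{k+1}}{(k+1)!}.
\]

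\emph{Step 3: transfer to $W_2$.} The forecaster is $\widehat\nu=\mathrm{antidev}(T_k[\widetilde\gamma])|_{s=h}$. The jet estimates are not at issue here: this is the pure bias statement, so the exact jet, i.e.\ $\E$ of the estimated jet, is substituted. Assumption~\ref{ass:C} says the anti-development is $C_{\mathrm{geo}}$-Lipschitz from development curves with the sup-norm on $[0,h]$ into $W_2$ at the endpoint. Applying it to $\widetilde\gamma$ and $T_k[\widetilde\gamma]$ gives $W_2(\widehat\nu,\mu_{t_n+h})\le C_{\mathrm{geo}}\,\varepsilon h^{k+1}/(k+1)!$.

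The main obstacle is Step 3, and specifically checking that the Lipschitz estimate applies to this particular pair of curves. Both curves must lie in the admissible family on which Assumption~\ref{ass:C} is posited. For $\widetilde\gamma$ this is immediate. For the polynomial $T_k[\widetilde\gamma]$ I would use the segment $\sigma_\lambda$ from the candidate argument, which interpolates linearly between the two curves in $T_{\bar\mu}\PP$. I would check that its anti-development stays in the regular class, using smallness of $h$ and local transport-richness. Everything else is either the exact identity~\eqref{eq:devjet} or the isometry of parallel transport; that isometry is taken as part of the assumed well-posed second-order structure.
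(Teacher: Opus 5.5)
Your proof is correct and follows the paper's route. You use a curvature-free Taylor remainder for the development in $T_{\bar\mu}\PP$; the paper phrases this via $\|\ddot\delta(s)\|\le\varepsilon s^{k-1}/(k-1)!$ and a Lagrange remainder, which is equivalent to your order-$(k{+}1)$ integral remainder. You then apply the $C_{\mathrm{geo}}$-Lipschitz anti-development of Assumption~\ref{ass:C}, exactly as the paper does.

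Two minor points. First, the exact jet is what the statement needs, but it is not the same as $\E$ of the regression-estimated jet; that gap is the window-fit bias, which Proposition~\ref{prop:covariant} accounts for separately. Second, the paper treats the admissibility of the polynomial curve, which you flag as the main obstacle, as part of what Assumption~\ref{ass:C} grants, so no separate argument via $\sigma_\lambda$ is needed.
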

\begin{proof}
By \eqref{eq:devjet} the development of $\gamma$ and its degree-$k$ Taylor agree to first order at
$0$, and their difference $\delta$ has $\|\ddot\delta(s)\|\le\varepsilon s^{k-1}/(k-1)!$; the Lagrange
remainder gives $\int_0^h(h-\tau)\|\ddot\delta\|\,d\tau=\varepsilon h^{k+1}/(k+1)!$ in
$T_{\bar\mu}\PP$, with no curvature term (parallel transport is isometric). The
$C_{\mathrm{geo}}$-Lipschitz anti-development of Assumption~\ref{ass:C} turns this into the stated
$W_2$ bias.
\end{proof}
\begin{remark}[The naive forecaster fails (C)]\label{rem:naivefails}
The order-$(k{+}1)$ chart derivative of $U\circ\gamma$ equals $\nabla_t^{k}v$ plus curvature
corrections each carrying at least one covariant acceleration $\nabla_t^{j}v$, $j\ge1$: along any
$W_2$-geodesic the chart curve is the straight line $s\mapsto sv$ and all $\nabla_t^{j}v$ vanish, so
a \emph{pure-velocity} correction would contradict straightness and hence cannot occur; mixed
corrections do. For $k=3$ the correction is $-R(v,\nabla_t v)v$, of size
$\asymp\|R\|\,\|v\|^{2}\|\nabla_t v\|$, which is $O(1)$ rather than $O(\varepsilon)$ unless the
entire jet is small. Thus the naive in-chart forecaster of Appendix~\ref{app:forecaster} does
\emph{not} achieve the curvature-free bias of estimate~(C) for $k\ge3$, whereas the covariant
forecaster cancels the correction by construction --- this is precisely why a covariant extrapolation
is needed.
\end{remark}

\subsection{Variance: the estimate (S)}\label{ssec:covvar}
\begin{lemma}[Optimality of the empirical exponent on the class]\label{lem:gammaopt}
The packing of Appendix~\ref{app:reduction} may be taken with bounded potential Hessian
($am\le c_0$, hence inside the regular class) while the perturbed density has Hessian of order
$m^{2}$. The densities therefore do not lie in a fixed H\"older ball as $m\to\infty$, so density
smoothness cannot be exploited and the empirical-$W_2$ exponent $\gamma_d=\min(1/d,1/2)$
(Fournier--Guillin~\cite{fg}; Weed--Bach~\cite{weedbach}; cf.\ the smooth regime of~\cite{nwb}) is
minimax-optimal on the class.
\end{lemma}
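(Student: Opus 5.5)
The lemma has two halves, and I will prove them separately. The first half says the packing of Appendix~\ref{app:reduction} lies in the regular class even though its densities are rough. The second says the exponent $\gamma_d$ cannot be improved on this class.

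\emph{Step 1 (membership and roughness).} I start from the potentials $\rho_\omega=\frac{a}{m}\sum_c\omega_c\Phi(m(x-x_c))$. Their Hessians are $\nabla^2\rho_\omega=am\,\nabla^2\Phi(m\cdot)$, which stay bounded by $C_\Phi am\le C_\Phi c_0$ uniformly in $m$. Lemma~\ref{lem:bump}(i)--(ii) and Proposition~\ref{prop:regexists} then place each $\nu_\omega$ and its displacement geodesic inside $\mathcal F_{r_0}$.

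For the density I use constancy of $\mu_0$. The pushforward density is $p_\omega(y)=1/\det(I+\nabla^2\rho_\omega(x))$ with $x=(\mathrm{id}+\nabla\rho_\omega)^{-1}(y)$. Expanding to first order gives $p_\omega-1\approx-\Delta\rho_\omega=-am\,(\Delta\Phi)(m\cdot)$.

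Differentiating twice more produces a Hessian of order $am\cdot m^2$. At the balancing choice $a\asymp m^{-1}$ this is of order $m^2$. The chain-rule terms coming from $(\mathrm{id}+\nabla\rho_\omega)^{-1}$ add only lower-order contributions, since that map has Lipschitz constant $1+O(am)$. Hence $\|\nabla^2 p_\omega\|_\infty\gtrsim m^2\to\infty$, and more generally every $C^s$ norm with $s>0$ diverges. So no fixed H\"older ball contains the packing.

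\emph{Step 2 (lower bound on the exponent).} Next I invoke Proposition~\ref{prop:reduction} in the static case, with $H$ large enough that $r_H$ is not binding, or equivalently with the frozen curves of Theorem~\ref{thm:stat}(A). The Fano computation there gives $\inf_{\hat\nu}\sup\E W_2\gtrsim m_{\mathrm{eff}}^{-\gamma_d}$ over the packing. This uses $m\asymp M^{1/d}$ for $d\ge3$, where $a\asymp M^{-1/d}$ and therefore $am\asymp1\le c_0$ after shrinking constants. For $d\le2$ it uses $m\asymp1$, which gives the parametric rate.

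The point to check is that the $d\ge3$ optimum sits exactly at the boundary $am\asymp c_0$. That boundary is precisely the regime in which the Hessian of the potential stays bounded while the density Hessian blows up, so Step 1 applies to the extremal packing itself. I will make this explicit by fixing $a=c_0/m$ and choosing $m$ with $M a^2m^2\asymp m^d$.

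\emph{Step 3 (matching upper bound).} The upper bound $M^{-\gamma_d}$ comes from Fournier--Guillin and Weed--Bach for compactly supported measures, or from Lemma~\ref{lem:pool}. Together with Step 2 this makes $\gamma_d$ minimax-optimal on the class.

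I would add a remark contrasting this with Niles-Weed--Berthet. Their faster rates $M^{-(1+s)/(d+2s)}$ require densities bounded in $C^s$ with $s>0$ fixed. Step 1 shows our class violates that hypothesis at every scale, so there is no contradiction.

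\emph{Main obstacle.} The delicate point is Step 1's claim that the density Hessian is genuinely of order $m^2$ rather than cancelling. I would handle it by evaluating at cell centres where $\nabla^2\Delta\Phi\ne0$ for a chosen nondegenerate $\Phi$. There the linear term dominates the $O((am)^2m^2)$ remainder once $c_0$ is small, because both terms scale as $m^2$ but the remainder carries an extra factor of $am\le c_0$.
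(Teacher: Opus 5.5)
Your decomposition matches the paper's argument, which is carried inline in the lemma statement. The extremal packing at $am\asymp c_0$ lies in $\mathcal F_{r_0}$ of Proposition~\ref{prop:regexists}, while its densities oscillate at frequency $m$ with amplitude $\asymp c_0$; Fano then gives the lower bound and an empirical-measure bound gives the upper bound. Steps~1 and~2 are sound, with two small corrections. For the static claim, run Fano on the frozen-curve experiment of Theorem~\ref{thm:stat}(A), where all $M$ samples come from $\nu_\omega$ and the total KL is $\lesssim Ma^2m^2$. Do not run it on Proposition~\ref{prop:reduction}, which controls a window risk through $M^{\mathrm{eff}}_{H,k}$. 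In Step~1, the chain-rule terms are also of order $m^2$ and are suppressed only by the extra factor $am\le c_0$, as your last paragraph says, not by the Lipschitz constant of $T^{-1}$.

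The genuine gap is Step~3. For $p=2$, Fournier--Guillin give $\E W_2^2(\hat\mu_M,\mu)\lesssim M^{-1/2}$ whenever $d<4$, hence only $\E W_2\lesssim M^{-1/4}$. Weed--Bach's rate $M^{-1/s}$ requires $s>2p=4$. The dyadic argument of Lemma~\ref{lem:pool} has the same defect: the sum $\sum_j 2^{-2j}\,2^{jd/2}M^{-1/2}$ is dominated by the coarsest scale for $d<4$, even when $\mu$ is uniform. This is not looseness in the bounds. For $\mu=\tfrac12(\delta_x+\delta_y)$ with $|x-y|=1$ and $\hat p$ the empirical frequency of $x$, one has $W_2^2(\hat\mu_M,\mu)=|\hat p-\tfrac12|$, so $\E W_2\asymp M^{-1/4}$ in every dimension. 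Over compactly supported measures the $W_2$ exponent is therefore $\min(1/d,1/4)$. Consequently the upper half of ``minimax-optimal'' does not follow from the results you cite when $d\le3$, where $\gamma_d=1/2$ or $1/3$.

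What rescues it is the two-sided density bound built into the class. Smooth by the Neumann heat semigroup $P_t$ on $\Omega$, which preserves the lower bound $c/2$. Then use the comparison $W_2\lesssim\|\cdot\|_{\dot H^{-1}}$ against a measure with density bounded below (the direction in \cite{peyre} that needs only a lower bound). This gives
\[
  \E\,W_2^2(\hat\mu_M,\mu)\ \lesssim\ t+\frac1M\sum_{k\ne0}\frac{e^{-2t|k|^2}}{|k|^2}.
\]
Choosing $t\asymp M^{-2/d}$ yields $M^{-2/d}$ for $d\ge3$. Choosing $t\asymp M^{-1}$ yields $M^{-1}\log M$ for $d=2$ and $M^{-1}$ for $d=1$. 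This is the Ledoux / Bobkov--Ledoux argument, with \cite{ast} at $d=2$, and is equivalent to the bounded-density case $s=0$ of \cite{nwb}. The paper's inline appeal to \cite{fg,weedbach}, and its Lemma~\ref{lem:pool}, rest on the same insufficient bound, so this repair is needed for the paper's version as well.
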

\begin{assumption}[Map-estimation rate (S)]\label{ass:S}
On the regular class, the chart estimate of the degree-$k$ covariant forecaster --- the Brenier
(transport) map obtained by drift-correcting the in-window samples to the forecast time via the
estimated jet and pooling the resulting $M_H\asymp MH/L$ samples --- estimates the drift-removed chart
value in $L^2(\bar\mu)$ at the rate
$\E\|\widehat U(0)-\E\,\widehat U(0)\|_{L^2(\bar\mu)}^2\lesssim(M_H)^{-2\gamma_d}$.
\end{assumption}

\noindent Estimate~(S) is a \emph{transport-map} rate, not the distribution rate, and the two are not
interchangeable: $W_2(\widehat\mu,\mu)\asymp M^{-\gamma_d}$ does not by itself give the same
$L^2(\bar\mu)$ rate for the Brenier map. Map-estimation bounds of this order are available under
additional regularity --- smooth maps, strongly convex potentials (H\"utter--Rigollet~\cite{hutter};
Manole et al.~\cite{manole}) --- but not for an arbitrary bounded-density class, which is why we state
(S) as an assumption rather than derive it from Lemma~\ref{lem:gammaopt}. By Lemma~\ref{lem:gammaopt}
the exponent in~(S), \emph{if} attained, cannot be improved; achievability itself is the content of
the assumption, since a lower bound does not certify an estimator.

\noindent\textbf{Candidate argument (non-rigorous) and the design factor.} Granting~(S), the
forecast $\widehat U_{\mathrm{cov}}(h)=\sum_{j=0}^k\tfrac{h^j}{j!}\widehat{U^{(j)}(0)}
=q_k(h/H)^\top\widehat\beta$ is a fixed linear functional of the degree-$k$ local-polynomial
coefficients $\widehat\beta$; with the in-window snapshots ($N$ samples each, $\asymp nH/L$ of them,
$M_H=MH/L$ pooled) uniform on the rescaled window, the weighted Gram matrix tends to the Hilbert
moment matrix $\widetilde M_k=(1/(i{+}j{+}1))_{0\le i,j\le k}$, so the pooled value-variance
$(M_H)^{-2\gamma_d}$ of~(S) propagates to the forecast through the \emph{exact} design quadratic form
\[
   \E\big\|\widehat U_{\mathrm{cov}}(h)-\E\,\widehat U_{\mathrm{cov}}(h)\big\|_{L^2(\bar\mu)}^{2}
   \ \lesssim\ (MH/L)^{-2\gamma_d}\,\Phi_k(h/H),
\]
where $\Phi_k(r)=q_k(r)^\top\widetilde M_k^{-1}q_k(r)$ with $q_k(r)=(1,-r,\dots,(-r)^k)$, and whose two
governing diagonal entries are exact and classical,
$\Phi_k(0)=[\widetilde M_k^{-1}]_{00}=(k+1)^2$ and
$[\widetilde M_k^{-1}]_{kk}=(2k+1)\binom{2k}{k}^2\sim\tfrac{2}{\pi}16^k$
(Fan--Gijbels~\cite{fangijbels}); sample-splitting keeps $\bar\mu\perp\widehat\beta$. For $h\lesssim H$
the prefactor is the polynomial $(k+1)^2$, so the variance is $\asymp(MH/L)^{-2\gamma_d}$ uniformly in
$k$; the exponential constant governs only far extrapolation $h\gg H$. The single non-rigorous step is
(S) itself; the choice to \emph{pool} rather than average snapshots is forced by resolution --- for
$d\ge3$ the empirical-$W_2$ error is a common, non-cancelling deficit, so averaging the $\asymp nH/L$
snapshots stays floored at the single-snapshot $N^{-\gamma_d}$ (a naive $N^{-\gamma_d}(nH/L)^{-1/2}$
would undercut Theorem~\ref{thm:unified}), while pooling reaches the $M_H$-sample resolution; for
$d\le2$ the two coincide.

\subsection{Conditional general-$k$ characterization}\label{ssec:covthm}
\begin{proposition}[Conditional matching upper bound for $k\ge1$, equispaced dense design]\label{prop:covariant}
Let $k\ge1$ and grant Assumptions~\ref{ass:C} and~\ref{ass:S}. On a regular problem under the
equispaced dense design ($H_\#\ge\Delta$, so $M_H\asymp MH/L$) the degree-$k$
covariant forecaster attains
\[
   \E\,W_2\big(\widehat\nu,\mu_{t_n+h}\big)\;\lesssim\;\varepsilon\,(h+H)^{k+1}+(MH/L)^{-\gamma_d},
\]
and optimizing $H=H_\#=\big((L/M)^{\gamma_d}/\varepsilon\big)^{1/(k+1+\gamma_d)}$ gives, for
$h\lesssim H_\#$, the $M$-exponent $\gamma_d(k+1)/(k+1+\gamma_d)$ of Theorem~\ref{thm:unified}. Thus,
\emph{conditionally on (C) and (S)}, the upper bound matches the lower bound for all $k$ in the
window-limited regime $h\lesssim H_\#$. The unconditional statement (without (C),(S)) is
Conjecture~\ref{conj:upper}.
\end{proposition}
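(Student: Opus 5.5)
The proof splits the forecast error into a bias term and a variance term and handles each with one of the two assumptions, mirroring the proof of Theorem~\ref{thm:upper0}. Fix a bandwidth $H\ge\Delta$ and use the sample split of Appendix~\ref{app:forecaster}: the base point $\bar\mu$ is built from $\mathcal D_0$ and the chart coordinates from $\mathcal D_1$. Conditionally on $\mathcal D_0$, write $\widehat\nu^\ast$ for the anti-development of the degree-$k$ Taylor polynomial built from the \emph{population} covariant jet, i.e.\ the forecast with exact coefficients. The triangle inequality then gives
\[
W_2(\widehat\nu,\mu_{t_n+h})\le W_2(\widehat\nu^\ast,\mu_{t_n+h})+W_2(\widehat\nu,\widehat\nu^\ast).
\]
After taking expectations, I bound the first term by Proposition~\ref{prop:covbias} and the second by Assumption~\ref{ass:S}.

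\emph{Bias.} Under Assumption~\ref{ass:C}, Proposition~\ref{prop:covbias} gives $C_{\mathrm{geo}}\varepsilon h^{k+1}/(k+1)!$ when the jet is taken at the endpoint. The forecaster, however, estimates the jet by local-polynomial regression over the window $[-H,0]$. That regression introduces an additional in-development Taylor error of order $\varepsilon H^{k+1}$ at each window point. Propagated through the bounded weights $\omega_j$ evaluated at $s=h$, this contributes $C_{k,K}\varepsilon(h+H)^{k+1}$, exactly as in the in-chart bias bound of Appendix~\ref{app:forecaster}; the difference is that the computation is now carried out in the flat tangent space $T_{\bar\mu}\PP$, where~\eqref{eq:devjet} makes it curvature-free. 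The Lipschitz constant $C_{\mathrm{geo}}$ of the anti-development, which Assumption~\ref{ass:C} bounds, then converts this to $W_2$. I also need to absorb the discrepancy between $\bar\mu$ and the true base point $\mu_{t_n}$. To do so, I would take the reference to be $\bar\mu$ itself, which Assumption~\ref{ass:C} permits since the chart is based at the barycenter, and note that $\mathcal D_0$-randomness enters only through this base point.

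\emph{Variance.} Because the anti-development is Lipschitz under Assumption~\ref{ass:C}, $W_2(\widehat\nu,\widehat\nu^\ast)\lesssim\|\widehat U_{\mathrm{cov}}(h)-\E\widehat U_{\mathrm{cov}}(h)\|_{L^2(\bar\mu)}$. The forecast is a fixed linear functional $q_k(h/H)^\top\widehat\beta$ of the regression coefficients, so Assumption~\ref{ass:S} and the design quadratic form give a second moment $\lesssim(MH/L)^{-2\gamma_d}\Phi_k(h/H)$. For $h\lesssim H$ the factor $\Phi_k$ is bounded by a $k$-constant, and Jensen's inequality yields $\E\lesssim(MH/L)^{-\gamma_d}$. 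Summing the two terms gives the displayed bound. Substituting $H=H_\#$, which is feasible because $H_\#\ge\Delta$ in the dense regime, balances $\varepsilon H^{k+1}$ against $(MH/L)^{-\gamma_d}$ as in Corollary~\ref{cor:denserate}. For $h\lesssim H_\#$ this yields $\varepsilon(h+H_\#)^{k+1}\asymp_k\varepsilon H_\#^{k+1}$, which has the stated exponent and matches Theorem~\ref{thm:unified}.

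The main obstacle is the bias step. Proposition~\ref{prop:covbias} is stated for the exact endpoint jet, whereas the forecaster uses a window-regressed jet. This requires that the development of the observed window curve about $\bar\mu$ has $(k{+}1)$-st derivative bounded by $\varepsilon$ on all of $[-H,h]$, which follows from~\eqref{eq:devjet} applied at every time together with isometry of parallel transport. It also requires that the comparison constant $C_{\mathrm{geo}}$ stays bounded uniformly over a horizon of length $h+H$ rather than $h$. I would handle both by reading Assumption~\ref{ass:C} on the enlarged interval $[-H,h]$.
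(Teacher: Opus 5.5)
Your proposal is correct at the paper's level of rigour and follows the same route as its proof: the bias from Proposition~\ref{prop:covbias}, enlarged by the window fit to $\varepsilon(h+H)^{k+1}$; the variance from Assumption~\ref{ass:S}, propagated through the design factor $\Phi_k(h/H)\asymp_k1$ for $h\lesssim H$; and the balance at $H=H_\#$. One bookkeeping fix: because your $\widehat\nu^\ast$ uses the exact endpoint jet, $W_2(\widehat\nu,\widehat\nu^\ast)$ is not controlled by the centred deviation $\|\widehat U_{\mathrm{cov}}(h)-\E\,\widehat U_{\mathrm{cov}}(h)\|_{L^2(\bar\mu)}$ alone, so define $\widehat\nu^\ast$ as the anti-development of $\E\,\widehat U_{\mathrm{cov}}$ instead (the bias/variance split of Appendix~\ref{app:forecaster}), which is exactly where your window-regression term already sits.
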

\begin{proof}
Add the bias of Proposition~\ref{prop:covbias} ($\le C_{\mathrm{geo}}\varepsilon h^{k+1}/(k+1)!$ under
(C), and $\varepsilon(h+H)^{k+1}$ once the window fit over $[-H,h]$ is included) and the variance
under~(S) via the design factor of Section~\ref{ssec:covvar} ($\asymp(MH/L)^{-2\gamma_d}$ for
$h\lesssim H$, since $\Phi_k(h/H)\asymp(k+1)^2$ there), then optimize over $H$ as in
Theorem~\ref{thm:upper0}; the exponent is unimprovable by Theorem~\ref{thm:unified} and
Lemma~\ref{lem:gammaopt}.
\end{proof}

\subsection{Necessity of the regularity}\label{ssec:necessity}
\begin{proposition}[Sufficiency and rate-tightness of the regularity]\label{prop:necessity}
The two regularity layers play distinct, non-gratuitous roles. \emph{(Lower bound, spatial.)} By Lemma~\ref{lem:gammaopt}
the packing of Appendix~\ref{app:reduction} already saturates the empirical-$W_2$ exponent $\gamma_d$
while remaining inside the reference-star class of Definition~\ref{def:reg}, so strengthening the
smoothness of the densities cannot improve the spatial rate --- the \emph{spatial exponent is tight on
this class}. \emph{(Upper bound, chart.)} The conditional forecaster instead needs the
chart regularity of Assumption~\ref{ass:C}: Caffarelli regularity --- convex support and density
bounded away from $0$ and $\infty$ --- is what makes the Brenier maps from the barycenter, hence the
chart $\mathrm{Log}_{\bar\mu}$ and the development~\eqref{eq:devjet}, well defined and $C^k$, so that
the jet $\nabla_t^{j}v$ exists and is estimable; dropping it removes the very object the forecaster
extrapolates. Thus the spatial exponent is tight and the chart assumption is necessary for the
construction to be \emph{defined}; the remaining structural conditions (star-geodesic closure,
reference-map smoothness) are sufficient ingredients used by the proofs rather than individually
shown necessary.
\end{proposition}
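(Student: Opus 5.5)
The plan is to treat the two halves of Proposition~\ref{prop:necessity} separately, since they make logically different claims: a tightness statement for the lower bound and a well-definedness statement for the conditional upper bound.

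\emph{Spatial half.} I would argue from Lemma~\ref{lem:gammaopt} and Proposition~\ref{prop:reduction}. First, I check that the packing $\{\nu_\omega\}_{\omega\in\mathcal W}$ of Appendix~\ref{app:reduction} lies in the reference-star class $\mathcal F_{r_0}$ of Proposition~\ref{prop:regexists}. This holds because $\|\nabla^2\rho_\omega\|_\infty\asymp am\le c_0$, and we take $c_0\le r_0$. The Fano argument then delivers the exponent $\gamma_d$ at the optimizing scale $m\asymp (M^{\mathrm{eff}}_{H,k})^{1/d}$.

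Next, I observe that the perturbed densities $1/\det(I+\nabla^2\rho_\omega)$ have $j$-th derivatives of order $am\cdot m^{j}$. At the optimizing scale, $am\asymp c_0$, so these derivatives are unbounded in any fixed H\"older norm of order $\ge1$ as $m\to\infty$. Hence the hypothesis ``densities lie in a fixed $C^s$ ball'' is never invoked by the packing. Adding it as an extra class constraint would only shrink the class. Adding any constraint already satisfied by the packing (bounded Hessian of the potential, two-sided density bounds, star-geodesic closure) leaves the lower bound $\min(r_H,(M^{\mathrm{eff}}_{H,k})^{-\gamma_d})$ intact.

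Combined with the matching upper rate $M_H^{-\gamma_d}$ of Lemma~\ref{lem:pool} (the pooled empirical measure on a compact set), this shows that $\gamma_d$ is exactly the spatial exponent on the class. That is the tightness claim. I would phrase it carefully as ``within this class no estimator beats $\gamma_d$,'' not as a claim about arbitrary smoother classes, where the smooth regime of Niles-Weed--Berthet can improve the rate.

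\emph{Chart half.} This is a definitional argument. The forecaster of Appendices~\ref{app:forecaster}--\ref{app:covariant} requires the Brenier maps from $\bar\mu$ to each snapshot, and hence $\mathrm{Log}_{\bar\mu}$, to be $C^k$ in order that the jet $\nabla_t^{j}v$, $j\le k$, in \eqref{eq:devjet} exists. Caffarelli's theory (convex support, density bounded above and below) supplies exactly this, and it is what Assumption~\ref{ass:C} packages.

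I would then exhibit why dropping it breaks the construction. The simplest route is a non-convex-support or vanishing-density example in which the Brenier map is discontinuous. Then $s\mapsto U(s)$ is not differentiable in $L^2(\bar\mu)$, and the Taylor/development jet is undefined.

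\emph{Remaining conditions.} Finally, I state that star-geodesic closure and reference-map smoothness are used only as sufficient ingredients (in Lemma~\ref{lem:reach} and Lemma~\ref{lem:bump}). No necessity is claimed for them, which matches the statement's final sentence.

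The main obstacle is the word ``necessary'' in the chart half. The example only shows that this particular construction becomes undefined, not that no rate-optimal estimator exists without Caffarelli regularity. I would therefore keep the claim restricted to definability of the forecaster, as the statement does, and not attempt a minimax impossibility result.
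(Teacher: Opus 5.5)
Your spatial half follows the paper's argument and is sound: Lemma~\ref{lem:gammaopt} is applied to the packing of Appendix~\ref{app:reduction}, which lies in $\mathcal F_{r_0}$ because $\|\nabla^2\rho_\omega\|_\infty\asymp am\le c_0$. Restricting the conclusion to ``no estimator beats $\gamma_d$ on this class'' is the correct reading. Imposing a fixed H\"older ball on the densities would exclude the packing and can improve the rate. (For $d\le2$ the optimal scale is $m\asymp1$, so the H\"older blow-up remark is vacuous there; tightness is then just the parametric two-point bound.)

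The chart half has a step that fails. You propose an example with non-convex support and a discontinuous Brenier map, and conclude that $s\mapsto U(s)$ is not differentiable in $L^2(\bar\mu)$. This conflates spatial regularity of the map at a fixed time with time-regularity of the chart curve, and the implication is false.

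Here is a counterexample in $d=1$. Let $\bar\mu$ be uniform on $[0,1]$ and, for $s\ge0$, let $\mu_s$ be uniform on $[-s,\tfrac12-s]\cup[\tfrac12+s,1+s]$. For $s>0$ each $\mu_s$ has non-convex support. The optimal (monotone) map is $x\mapsto x-s$ on $[0,\tfrac12)$ and $x\mapsto x+s$ on $(\tfrac12,1]$, which jumps at $x=\tfrac12$. Yet $U(s)$, equal to $-s$ and $+s$ on the two halves, is linear in $s$. The curve is a geodesic with $\nabla_t v\equiv0$, and every jet exists.

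More generally, along any geodesic issuing from $\bar\mu$ one has $U(s)=s\,U(1)$, whatever the spatial regularity of $U(1)$. Also, for absolutely continuous $\bar\mu$ the chart $\mathrm{Log}_{\bar\mu}$ exists by Brenier's theorem with no Caffarelli hypothesis at all. So what is actually lost is only the spatial $C^k$ regularity that Assumption~\ref{ass:C} stipulates.

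Even that needs more than you state. Two-sided density bounds with convex support give only interior $C^{1,\alpha}$ potentials, i.e.\ merely H\"older maps. $C^k$ maps additionally require $C^{k-1,\alpha}$ densities.

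The repair is to keep the chart half purely definitional, which is all the paper asserts: the forecaster, as specified through Assumption~\ref{ass:C}, presupposes $C^k$ Brenier maps from $\bar\mu$. If you want a genuine witness that the chart curve fails to be $C^{k+1}$, it must come from non-smooth dependence of $\mathrm{Log}_{\bar\mu}$ on the measure along the curve in $d\ge2$. It cannot come from the discontinuity of a single map.
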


\subsection{Scope}\label{ssec:covscope}
We separate what is established from what is assumed. \emph{Exact:} the development identity
\eqref{eq:devjet}, the curvature-cancellation of Remark~\ref{rem:naivefails} (the order-$4$
coefficient $-R(v,\nabla_t v)v$), and the design constants $\Phi_k(0)=(k+1)^2$,
$[\widetilde M_k^{-1}]_{kk}=(2k+1)\binom{2k}{k}^2$. \emph{Assumed} (and hence the source of the
``conditional'' in Proposition~\ref{prop:covariant}): Assumption~\ref{ass:C}, comprising
(a)~well-posedness of the Cartan development/anti-development, the variation calculus, and the
Caffarelli chart regularity (smooth Brenier maps from the barycenter, excluded from
Definition~\ref{def:reg}) in
$(\PP,W_2)$ --- which, unlike a finite-dimensional manifold, does not follow automatically and rests
on the second-order theory of~\cite{ags,gigli}; (b)~a \emph{uniform} sectional-curvature upper bound
$\bar\kappa<\infty$ over the regular class, not implied by Definition~\ref{def:reg} (Wasserstein
curvature depends on the densities and potential derivatives, and need not be uniformly bounded); and
(c)~validity of the operator-valued, variable-curvature Rauch/Jacobi comparison in this setting, of
which the scalar Green-kernel computation in the candidate argument of Section~\ref{ssec:covbias} is
only the model case. And
Assumption~\ref{ass:S}, a \emph{transport-map} (not distribution) estimation rate for the pooled,
drift-corrected estimator, known only under extra map regularity~\cite{hutter,manole} and not for an
arbitrary bounded-density class. \emph{Caveats:} the contraction $C_{\mathrm{geo}}\le1$ holds only in
the slow-variation regime $\|\nabla_t v\|h\lesssim\|v\|$ (the holonomy term has positive sign); the
operating regime is the window-limited $h\lesssim H_\#$; the $d=2$ logarithmic correction
(Appendix~\ref{app:d2}) is left at the power-law level. Establishing (C) and (S) unconditionally on
the regular class --- in particular a uniform curvature bound and a map-estimation rate at the
empirical-measure exponent --- is the content of Conjecture~\ref{conj:upper} and is left open.

\section{Numerical verification}\label{app:numerics}

\paragraph{Exponents survive curvature (Figure~\ref{fig:exp}).} We compare degree-$k$
extrapolation on a flat translation family (right; $W_2$ is Euclidean on the mean) against a
\emph{curved} path of zero-mean Gaussians with rotating eigenvectors (left; the covariances do
not commute, so the path is genuinely curved in the Bures--Wasserstein manifold), scored by the
closed-form Bures distance. Both give fitted log--log slopes $\approx k+1$ --- curved
$0.96,2.03,3.01$ and flat $0.98,1.99,2.99$ for $k=0,1,2$ --- consistent with the prediction that,
in this tested finite-dimensional submodel, curvature changes the leading \emph{constant} but not the
local horizon exponent $h^{k+1}$ (the curved/flat ratio runs $0.50,0.93,1.90$ across $k$); cf.\
Proposition~\ref{prop:sharp}/Lemma~\ref{lem:curv}.

\begin{figure}[tbp]
\centering
\includegraphics[width=\textwidth]{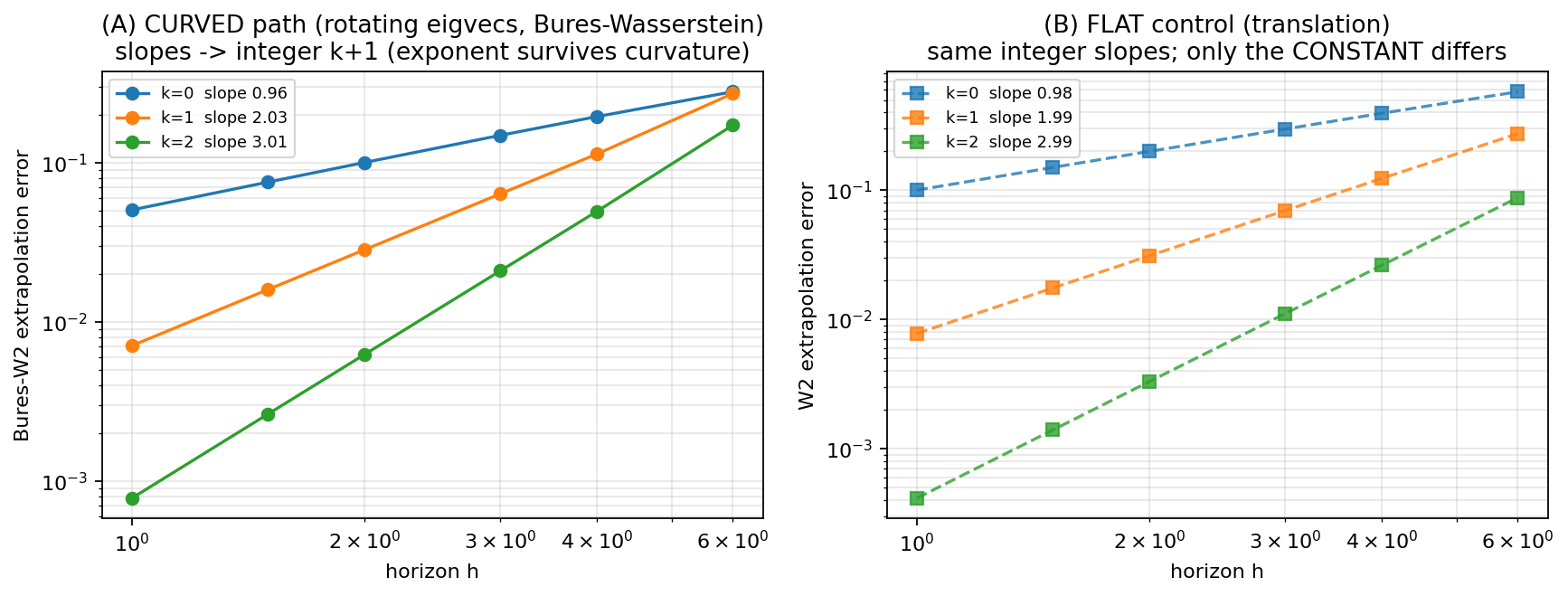}
\caption{Horizon exponent survives curvature (Proposition~\ref{prop:sharp}). \textbf{(Left)} a
curved path of zero-mean Gaussians with rotating eigenvectors (Bures--Wasserstein), degree-$k$
Taylor extrapolation, closed-form Bures error: slopes $\approx1,2,3$. \textbf{(Right)} a flat
translation control: slopes $\approx1,2,3$. The integer exponent is identical on both; only the
constant differs.}
\label{fig:exp}
\end{figure}

\paragraph{$(N,h)$ phase diagram (Figure~\ref{fig:phase}).} This verifies
Theorem~\ref{thm:stat}(B) and Corollary~\ref{cor:leverage} directly, using the
lower-bound construction itself: $\rho=N(0,1)$, $n=8$ ($L=7$), $k=1$, window truth a
degree-$k$ trend and the future carrying the invisible bump $b$. The order-$k$
least-squares extrapolant then has, exactly, bias equal to the extrapolation floor
$\varepsilon h^{k+1}/(k+1)!$ (the future deviation is information-theoretically
unobservable) and variance equal to the leverage $\tfrac1N w^\top G^{-1}w$. Panel~A maps
the RMS $\sqrt{\E W_2^2}=\sqrt{\text{floor}^2+\text{var}}$ over $(N,h)$; the white phase
boundary $\varepsilon h^{k+1}/(k+1)!=\sqrt{v}$ separates the extrapolation-limited regime
(error set by $h$, independent of $N$: the dimension-free floor) from the
statistics-limited regime ($N^{-1/2}$ leverage). Panels~B--C confirm the limiting
scalings ($N^{-1/2}\!\to$ bias plateau; slope $k\!\to\!k{+}1$ in $h$); Monte Carlo
(markers) matches the analytic risk to within $1.3\%$. The fitted large-$h$ slope of
$\sqrt{w^\top G^{-1}w}$ is $0.97\approx k$ with effective scale $6.4\approx L$, confirming
that the leverage is governed by the window $L$, not the spacing.

\begin{figure}[tbp]
\centering
\includegraphics[width=\textwidth]{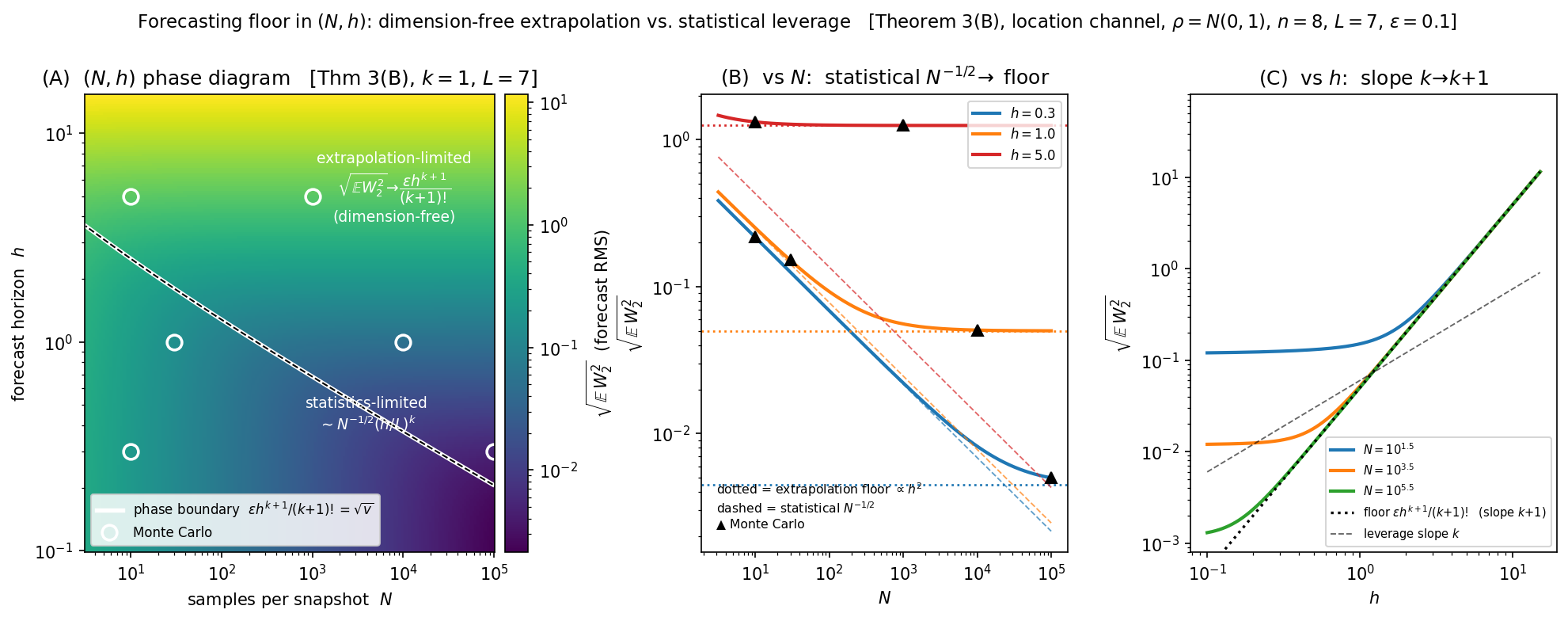}
\caption{$(N,h)$ phase diagram for Theorem~\ref{thm:stat}(B) (location channel,
$\rho=N(0,1)$, $n=8$, $L=7$, $\varepsilon=0.1$, $k=1$). \textbf{(A)} forecast RMS over
$(N,h)$; the white curve is the phase boundary $\varepsilon h^{k+1}/(k+1)!=\sqrt{v}$
separating the dimension-free extrapolation-limited regime (upper/right) from the
statistics-limited regime $\sim N^{-1/2}(h/L)^k$ (lower/left); circles are Monte Carlo.
\textbf{(B)} RMS vs.\ $N$ at fixed $h$: statistical $N^{-1/2}$ decay (dashed) settling onto
the extrapolation floor (dotted, $\propto h^2$); triangles are Monte Carlo. \textbf{(C)}
RMS vs.\ $h$ at fixed $N$: slope $k$ (leverage) crossing to slope $k{+}1$ (floor).}
\label{fig:phase}
\end{figure}

\paragraph{Sharp extrapolation rate (Figure~\ref{fig:sharp}).} Using a dense design
($n=600$) and a small horizon ($h=0.02$) to remain in the $h$-independent deep-statistics
regime $H_*\gg h$, the optimized-bandwidth local-polynomial forecaster has error decaying as
$M^{-(k+1)/(2k+3)}$. The fitted exponents $0.314,0.390,0.421,0.438$ for $k=0,1,2,3$ track the
theoretical $0.333,0.400,0.429,0.444$ (Figure~\ref{fig:sharp}B; the small undershoot is the
expected pre-asymptotic bias) and stand well clear of the loose parametric $1/2$. Monte
Carlo matches the analytic bias--variance to within $0.5\%$.

\begin{figure}[tbp]
\centering
\includegraphics[width=\textwidth]{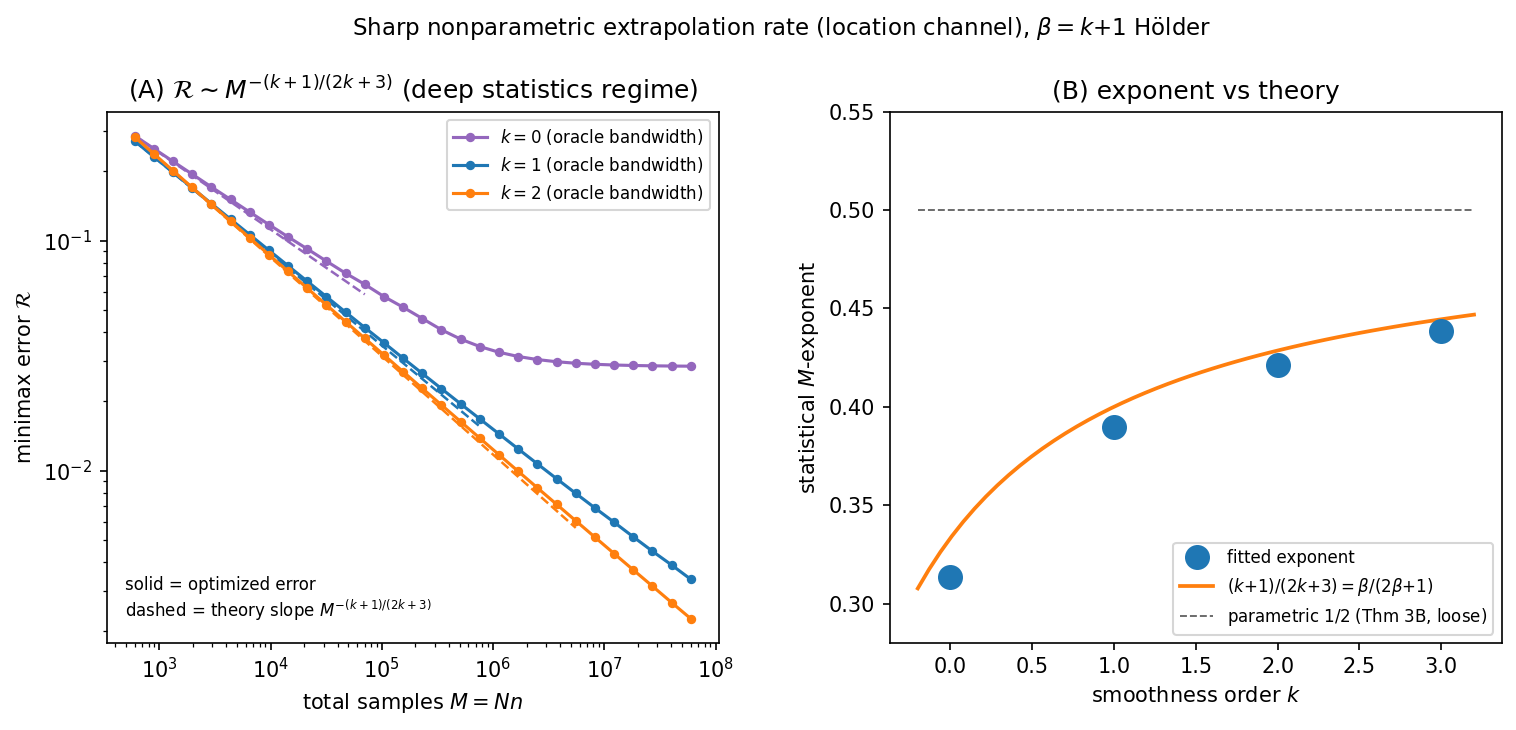}
\caption{Sharp nonparametric extrapolation rate (Theorem~\ref{thm:sharp}), location channel,
$\beta=k+1$ H\"older. \textbf{(A)} optimized-bandwidth forecast error vs.\ $M$ (solid) with
theoretical slope $M^{-(k+1)/(2k+3)}$ (dashed); the floor is off-scale at this small $h$.
\textbf{(B)} fitted statistical $M$-exponent vs.\ $k$ against $(k+1)/(2k+3)=\beta/(2\beta+1)$
(solid) and the loose parametric $1/2$ (dashed).}
\label{fig:sharp}
\end{figure}

\paragraph{Unified rate over $\PP$ (Figure~\ref{fig:unified}).} Three pieces, on isotropic
Gaussians drifting in $\R^d$. \textbf{(1) The spatial curse.} The empirical-$W_2$ fluctuation
$\E\,W_2(\hat\mu_M,\hat\mu_M')$ between two $M$-sample clouds --- a two-sample proxy for the
estimation risk $\E\,W_2(\hat\mu_M,\mu)$, which shares its exponent --- decays at the predicted
$M^{-\min(1/d,1/2)}$: a debiased Sinkhorn divergence~\cite{cuturi,feydy} on the GPU gives fitted exponents
$0.39,0.31,0.23,0.19,0.17$ for $d=2,\dots,6$ (theory $0.50,0.33,0.25,0.20,0.17$), and an
\emph{independent} exact network-simplex solver~\cite{peyrecuturi} reproduces $0.39,0.31,0.24$
for $d=2,3,4$ --- the two optimal-transport solvers agree to within $0.01$, so the measured curse
is not an entropic-regularization artifact. The $d=2$ undershoot ($0.39$ vs.\ $0.50$) is the
boundary log-correction. The $d=2$ undershoot ($0.39$ vs.\ $0.50$) is this
Ajtai--Koml\'os--Tusn\'ady\,/\,Ambrosio--Stra--Trevisan effect~\cite{ast}: in $d=2$ the two-sample
fluctuation scales as $\sqrt{\log M/M}$, whose finite-range log--log slope over
$M\in[6\times10^2,4\times10^3]$ is $-\tfrac12+\tfrac{1}{2\log M}\approx-0.43$ (Appendix~\ref{app:d2}),
already below the asymptotic $0.50$ and close to the observed $0.39$. \textbf{(2) The unified exponent.} Combining the measured
curse with the exact temporal Otto--Taylor bias and optimizing the pooling window reproduces the
predicted $\gamma_d(k+1)/(k+1+\gamma_d)$ (Panel~3, solid vs.\ dashed): the exponent rises with
smoothness $k$ and falls with $d$, collapsing to the location rate for $d\le2$. \textbf{(3)
Endpoint estimation ($h=0$).} An endpoint-estimation experiment --- pooling de-drifted snapshots
within an optimized bandwidth, with $h=0$ so it isolates the statistics-dominated branch
(current-distribution estimation rather than future forecasting) --- recovers the unified exponent
(Panel~2; stars in Panel~3). For $d=2$ the fitted $M$-exponents are $0.31,0.31,0.35$ for $k=0,1,2$, on the
predicted band (theory $0.33,0.40,0.43$; semi-empirical with the measured curse $0.28,0.33,0.35$)
and rising toward $k{=}2$. For $d=3$ the fits $0.19,0.18,0.22$ sit below the asymptotic prediction
($0.25,0.29,0.30$), a finite-budget effect: the curse itself is still pre-asymptotic at these $M$
($\hat\gamma_3=0.31$ vs.\ $1/3$), which lowers the whole estimation exponent. This confirms
Theorem~\ref{thm:upper0} ($k{=}0$) and is consistent with the conditional construction of
Proposition~\ref{prop:covariant} ($k\ge1$); the run is the de-drift-plus-pooling surrogate, not the
full development forecaster, so it probes the predicted exponent rather than verifying the
geometry. A genuine
positive-horizon run at $h=o(H_\#)$ would share the same exponent while remaining a forecast.

\begin{figure}[tbp]
\centering
\includegraphics[width=\textwidth]{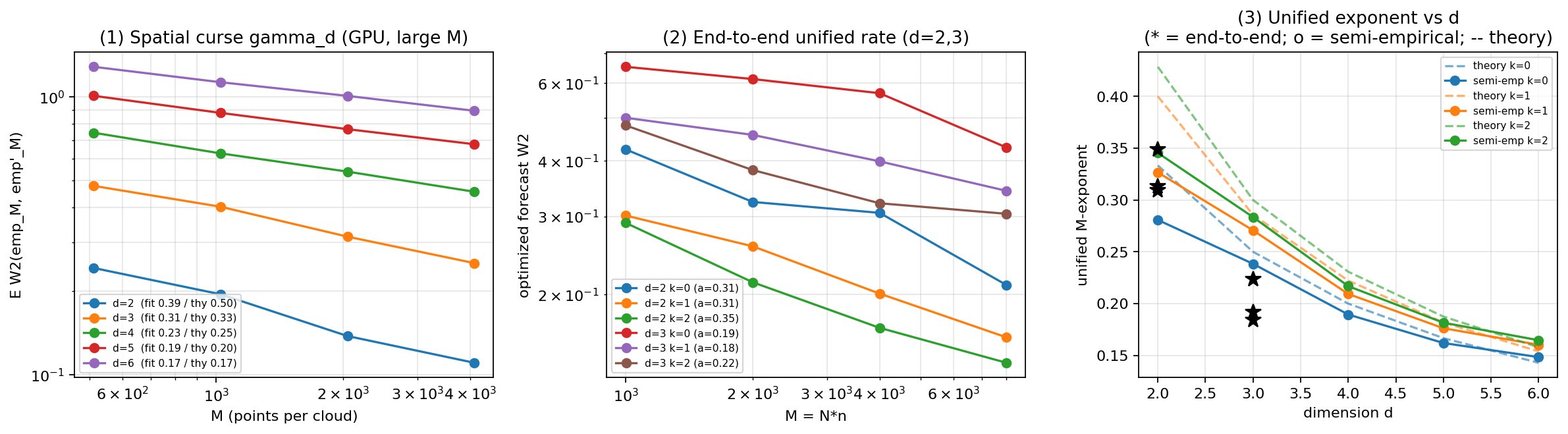}
\caption{Unified rate over $\PP$ (Theorem~\ref{thm:unified}, Conjecture~\ref{conj:upper}),
isotropic Gaussians in $\R^d$. \textbf{(1)} empirical-$W_2$ fluctuation (two-sample proxy for the estimation risk) vs.\ $M$, fitted curse
exponents against $M^{-\min(1/d,1/2)}$ for $d=2,\dots,6$ (debiased Sinkhorn divergence; an exact
EMD solver agrees to $0.01$ for $d\le4$). \textbf{(2)} endpoint estimation (de-drift $+$
pooling, optimized bandwidth, $h=0$) isolating the statistics-dominated branch, fitted $M$-exponent
rising with $k$ for $d=2$; the $d=3$ fits are pre-asymptotic at these budgets. \textbf{(3)} the unified $M$-exponent vs.\ $d$ for
$k=0,1,2$: semi-empirical (measured curse $+$ exact bias, solid) against theory
$\gamma_d(k+1)/(k+1+\gamma_d)$ (dashed); stars mark the endpoint-estimation fits
(on the band for $d=2$, pre-asymptotic for $d=3$).}
\label{fig:unified}
\end{figure}

\paragraph{Held-out predictive validation (Figure~\ref{fig:heldout}).} To rule out post-hoc
tuning of the bias--variance trade-off, we split a drifting field into a calibration half and a
held-out test half. From the calibration half \emph{alone} we fit the two constants of the model
$\mathrm{err}^2(H)=a^2(h+H/2)^2+b^2/(NH)$ (extrapolation bias $+$ pooled-estimator variance); the
fitted drift coefficient $a=0.030$ recovers the true per-step drift $0.031$, whereas the naive
increment $\|\Delta Q\|=0.49$ instead measures sampling noise --- the finite-sample pitfall of
Section~\ref{sec:stat}. The calibrated model then \emph{predicts}, on the untouched test half, the
U-shaped bandwidth curve (median relative error $18\%$) and its interior optimum
($H_\#\approx10$ vs.\ measured $H^*=8$). The optimal pooling bandwidth of
Theorem~\ref{thm:upper0} is thus a genuine out-of-sample prediction of the theory, not a fit.

\begin{figure}[tbp]
\centering
\includegraphics[width=\textwidth]{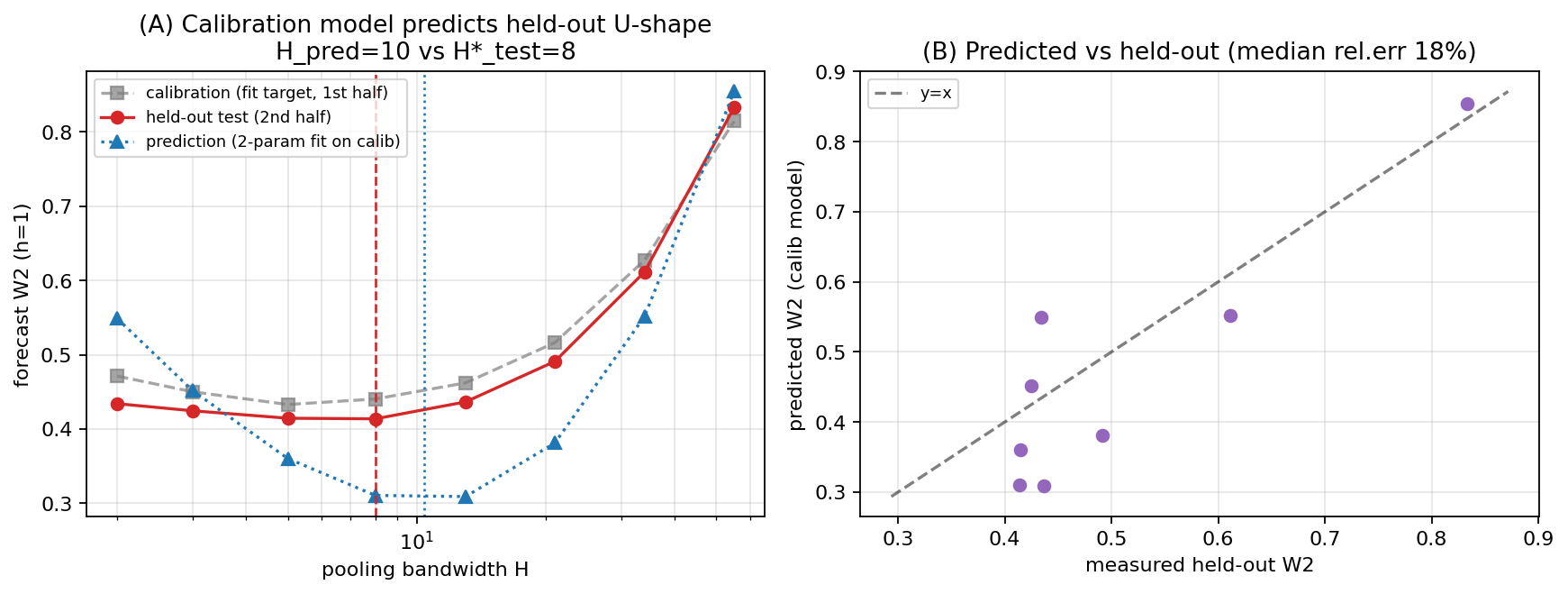}
\caption{Held-out predictive validation. \textbf{(A)} a bias--variance model with two constants
fit on the calibration half (blue) predicts the held-out test U-shape (red) and its optimal
pooling bandwidth $H_\#$; grey is the calibration fit target. \textbf{(B)} predicted vs.\ measured
held-out forecast error across the bandwidth grid (median relative error $18\%$). The optimum is
predicted out-of-sample, not fitted.}
\label{fig:heldout}
\end{figure}

\subsection{Two real series at opposite ends of the drift/noise spectrum}\label{sec:realdata}
The synthetic experiments isolate each rate under controlled conditions. We complement them with two
real distribution-valued series chosen to sit at opposite extremes of the drift-to-noise ratio,
scoring both by rolling-origin backtesting (expanding past, no look-ahead).

\paragraph{Near-stationary: S\&P~500 (Figure~\ref{fig:real}).} Daily cross-sections of log-returns of
the S\&P~500 constituents, one empirical measure $\hat\mu_t$ per trading day ($2514$ days,
$2015$--$2024$; $\approx192$ names/day), the series studied in the Wasserstein-autoregression
literature (Zhang--Kokoszka--Petersen). Two findings align with the theory. First, the
\emph{effective extrapolation order is data-dependent}: degree-$0$ persistence is the best forecaster
at every horizon, degree-$1$ is slightly worse, and degree-$2$ degrades sharply with $h$ --- on a
high-noise, near-stationary series the higher-order tangent forecaster of
Appendix~\ref{app:forecaster} extrapolates sampling noise, precisely the $k{=}0$ regime of
Theorem~\ref{thm:upper0}. Second, the \emph{moving-versus-static gap persists}: the one-step
pooled-persistence error sits at $\approx10^{-2}$ and does not fall with the sample budget, whereas
static empirical-$W_2$ estimation of a frozen law decays as $M^{-1/2}$; a finite-sample noise
reference $\tau(N)$ lies $\approx6.6\times$ below the floor, a persistent moving-versus-static gap
not explained by the finite-sample noise reference alone.

\paragraph{Strongly drifting: surface temperature (Figure~\ref{fig:temp}).} Daily mean $2$\,m
temperature over a $12\times10$ European lon--lat grid (Open-Meteo ERA5 archive, Jan--Jun~2023;
$120$ cells, $15$-day smoothed to remove synoptic weather), one cross-section $\hat\mu_t$ per day
with a $\approx+0.05^\circ$C/day seasonal drift. Here the predictions that the near-stationary S\&P
series masks become directly visible. \emph{(A)} pooled persistence has an \emph{interior} optimal
bandwidth $H^*=3$ days (Theorems~\ref{thm:stat}/\ref{thm:upper0}): too little pooling is
variance-limited, too much crosses the warming trend. \emph{(B)} the horizon exponents rise with the
forecaster order, fitted slopes $0.21,0.42,1.25$ for $k=0,1,2$ --- still below the integer $k{+}1$,
as finite-station leverage damps them (Corollary~\ref{cor:leverage}), but an order of magnitude
above the near-stationary S\&P slopes. \emph{(C)} the moving forecast floor ($\approx1^\circ$C)
again sits far above the static $M^{-1/2}$ estimation curve. We are explicit that the slopes are
damped and that the raw increment overstates the drift (here $\|\Delta Q\|$ reflects $30$-station
sampling noise, not the $\approx0.05^\circ$C/day signal); the quantities we read off are the
\emph{measured} optimal bandwidth and the slope \emph{ordering}, not a parametric rate. The slope ordering and the interior optimum persist across smoothing windows $\{1,7,15,30\}$ days,
including no smoothing, so the smoothness-order evidence is not a preprocessing artifact
(Appendix~\ref{app:smooth}). Across the two series the observable horizon slope grows monotonically with the drift-to-noise ratio (S\&P
$\approx0.01$, temperature $0.21$): a direct demonstration that the effective extrapolation order
depends on the drift-to-noise regime.

\begin{figure}[tbp]
\centering
\includegraphics[width=0.62\textwidth]{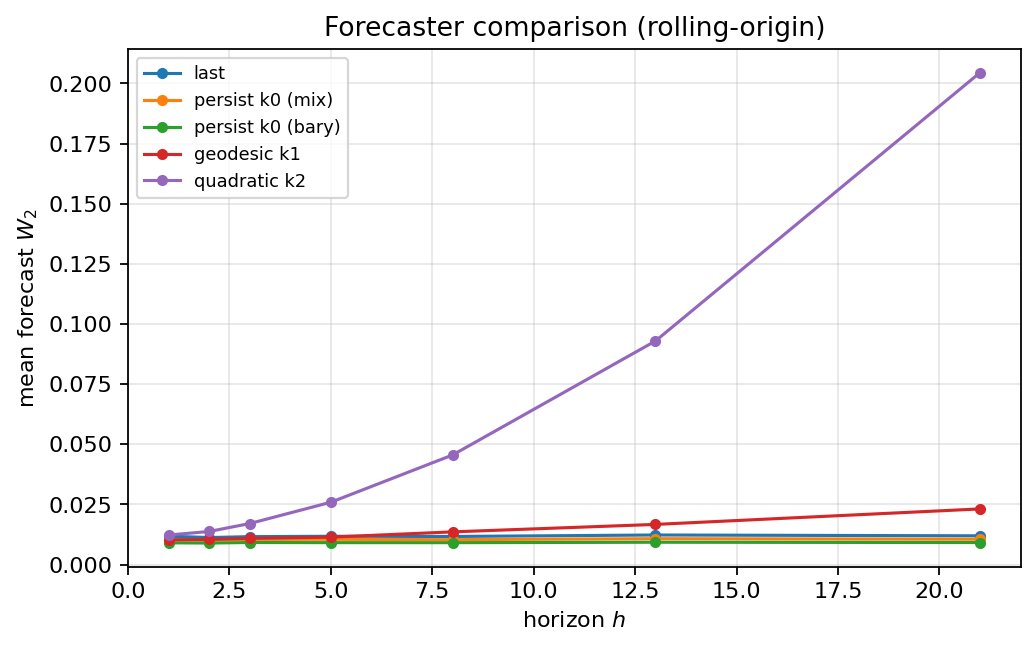}\\[4pt]
\includegraphics[width=0.62\textwidth]{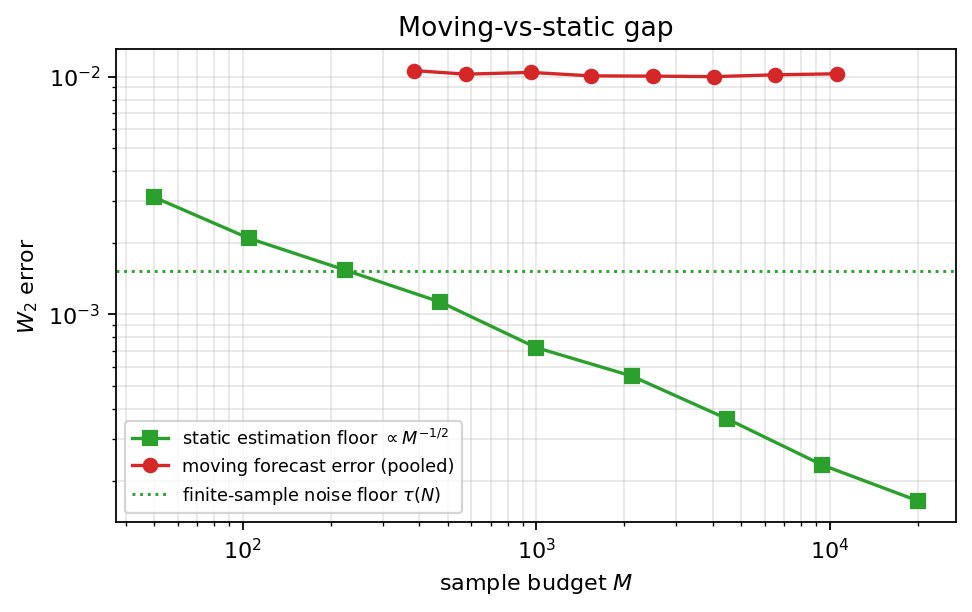}
\caption{Real-data illustration on S\&P~500 daily return cross-sections ($2514$ days, $\approx192$
names/day), rolling-origin. \textbf{(Top)} Forecast error vs.\ horizon: degree-$0$ persistence
(and the last-snapshot baseline) are best, the degree-$1$ geodesic forecaster is slightly worse,
and the degree-$2$ forecaster diverges with $h$ --- on a high-noise series higher-order
extrapolation amplifies sampling noise, consistent with a $k{=}0$ regime
(Theorem~\ref{thm:upper0}). \textbf{(Bottom)} Moving-versus-static gap: the pooled one-step
forecast error (red) stays at $\approx10^{-2}$ independently of the sample budget $M$, while static
empirical-$W_2$ estimation of a frozen law (green) decays as $M^{-1/2}$; the finite-sample noise
reference $\tau(N)$ (dotted) lies $\approx6.6\times$ below the forecast floor, a persistent
moving-versus-static gap not explained by the noise reference alone.}
\label{fig:real}
\end{figure}

\begin{figure}[tbp]
\centering
\includegraphics[width=\textwidth]{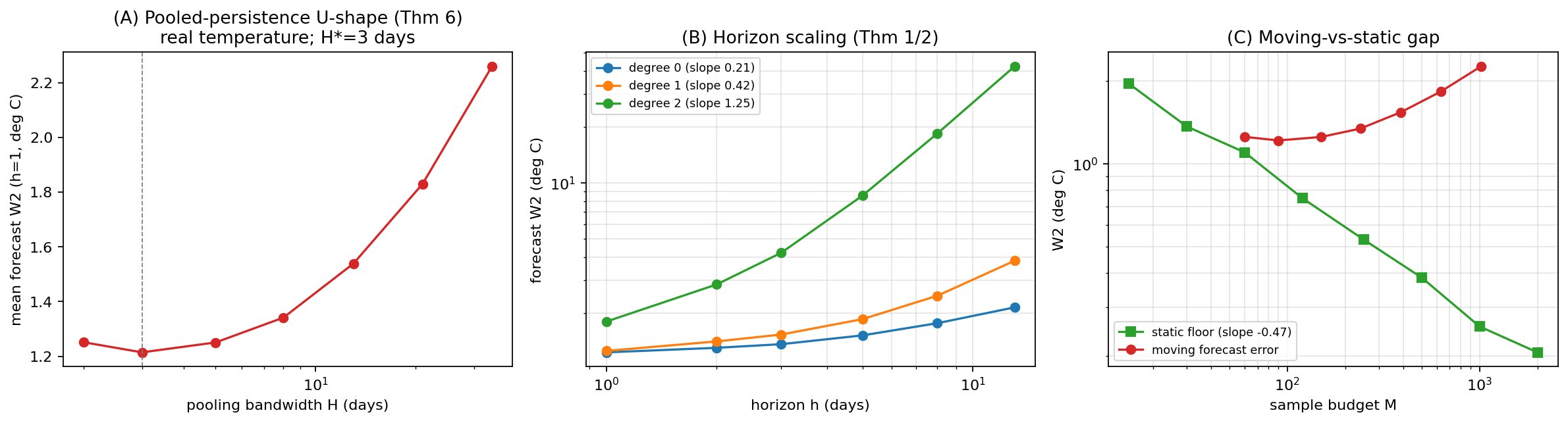}
\caption{Strongly-drifting real series: daily $2$\,m surface temperature over a European grid
(Open-Meteo ERA5 archive, Jan--Jun~2023, $15$-day smoothed). \textbf{(A)} pooled-persistence error
vs.\ bandwidth $H$, with an interior optimum $H^*=3$ days (Theorem~\ref{thm:upper0}). \textbf{(B)}
horizon scaling, fitted slopes $0.21,0.42,1.25$ for $k=0,1,2$, rising with smoothness and an order
of magnitude above the near-stationary S\&P slopes (finite-station leverage keeps them below the
integer $k{+}1$). \textbf{(C)} moving-vs-static gap: the $\approx1^\circ$C moving forecast floor
sits far above the static $M^{-1/2}$ estimation curve.}
\label{fig:temp}
\end{figure}

\section{Robustness to the temperature smoothing window}\label{app:smooth}
The real-temperature experiment of Section~\ref{sec:realdata} applies a $15$-day rolling mean to
remove synoptic weather. To verify that the reported smoothness order is not an artifact of this
preprocessing, we recompute the interior optimal bandwidth $H^*$ and the fitted horizon slopes on
the same cached field for smoothing windows of $1$ (no smoothing), $7$, $15$, and $30$ days,
holding every other setting fixed.

\begin{center}
\begin{tabular}{rccccc}
\hline
smoothing (days) & $\hat\varepsilon$ ($^\circ$C/day) & $H^*$ (days) & slope $k{=}0$ & $k{=}1$ & $k{=}2$\\
\hline
$1$ (none) & $2.50$ & $2$ & $0.18$ & $0.50$ & $1.24$\\
$7$        & $2.16$ & $3$ & $0.21$ & $0.50$ & $1.27$\\
$15$       & $2.09$ & $3$ & $0.21$ & $0.42$ & $1.25$\\
$30$       & $1.98$ & $3$ & $0.20$ & $0.38$ & $1.25$\\
\hline
\end{tabular}
\end{center}

\noindent The horizon slopes --- the actual evidence for temporal smoothness order --- are
nearly invariant ($k{=}0$: $0.18$--$0.21$; $k{=}1$: $0.38$--$0.50$; $k{=}2$: $1.24$--$1.27$)
and remain an order of magnitude above the near-stationary S\&P values ($\approx0.01$) even with
\emph{no} smoothing. Smoothing lowers the day-to-day noise $\hat\varepsilon$ and sharpens the
variance-limited regime --- moving $H^*$ off the grid boundary at $w{=}1$ to a stable interior
optimum of $3$ days for $w\ge7$ --- but does not manufacture the smoothness order, which is carried
by the large seasonal drift already present in the raw field (Figure~\ref{fig:smooth}). This
non-deseasonalized field mixes a near-deterministic seasonal trend with the weather residual; the
complementary deseasonalized-residual regime (a near-stationary, S\&P-like field) is left to future
work.

\begin{figure}[tbp]
\centering
\includegraphics[width=\textwidth]{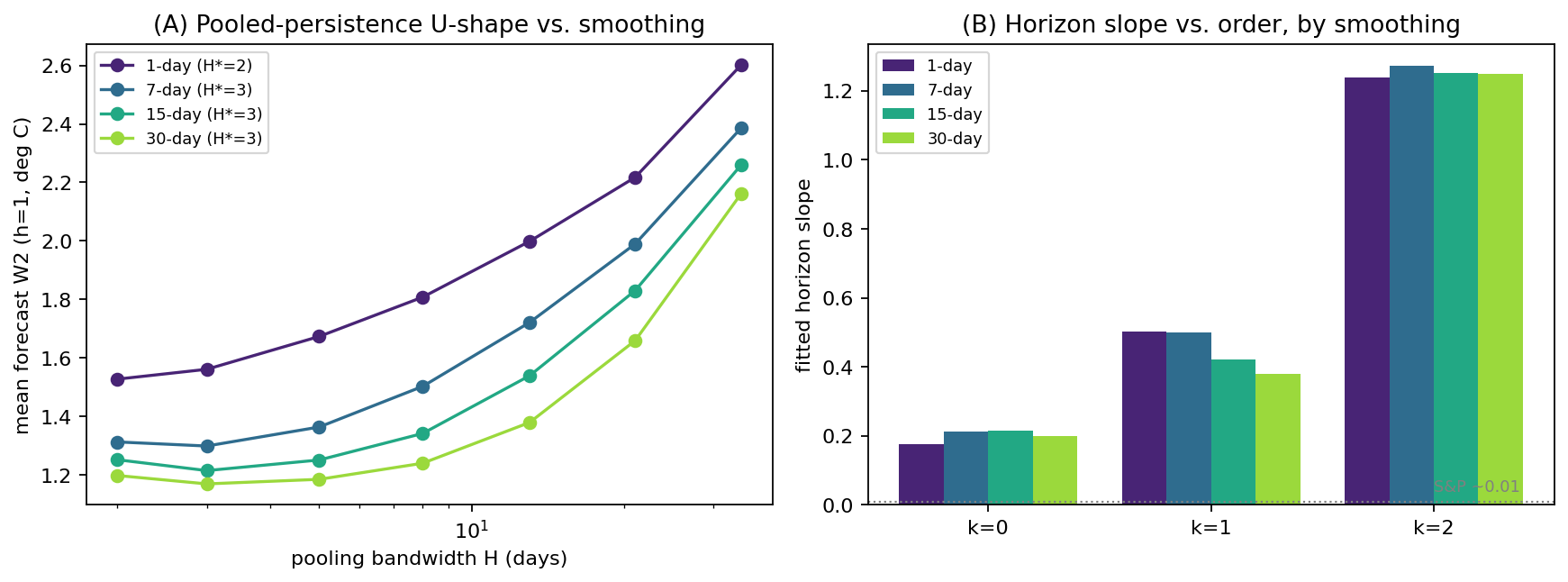}
\caption{Robustness of the real-temperature experiment to the smoothing window
$w\in\{1,7,15,30\}$ days. \textbf{(A)} pooled-persistence U-shape: the interior optimum is stable at
$H^*\approx3$ days for $w\ge7$ and only touches the grid boundary at $w{=}1$. \textbf{(B)} fitted
horizon slopes by forecaster order $k=0,1,2$: the rising-slope structure is nearly invariant
across windows and far above the near-stationary S\&P reference, so it is not a smoothing artifact.}
\label{fig:smooth}
\end{figure}

\section{The $d=2$ logarithmic correction}\label{app:d2}
In $d=2$ the empirical-$W_2$ two-sample fluctuation is not exactly $M^{-1/2}$. The
Ajtai--Koml\'os--Tusn\'ady optimal-matching result, made sharp for absolutely continuous laws by
Ambrosio--Stra--Trevisan~\cite{ast}, gives
$\E\,W_2(\hat\mu_M,\hat\mu_M')\asymp\sqrt{\log M/M}=M^{-1/2}(\log M)^{1/2}$, whose log--log slope is
\[
   \frac{\mathrm d\log\E\,W_2}{\mathrm d\log M}=-\tfrac12+\tfrac{1}{2\log M},
\]
strictly above $-\tfrac12$ and decaying only logarithmically. Over the range
$M\in[6\times10^2,4\times10^3]$ of Figure~\ref{fig:unified}(1) this local slope runs from $-0.42$ to
$-0.44$, and a single power-law fit to $\sqrt{\log M/M}$ across the range returns an effective
exponent of $0.43$ (Figure~\ref{fig:d2}) --- already well below the asymptotic $0.50$ and close to
the observed $0.39$, the residual being the Ambrosio--Stra--Trevisan constant and sub-leading terms.
The $d=2$ undershoot is therefore the expected finite-range form of the boundary log-correction, not
a breakdown of the curse exponent; for $d\ge3$ no such correction appears and the fitted exponents
track $1/d$ directly.

\begin{figure}[tbp]
\centering
\includegraphics[width=\textwidth]{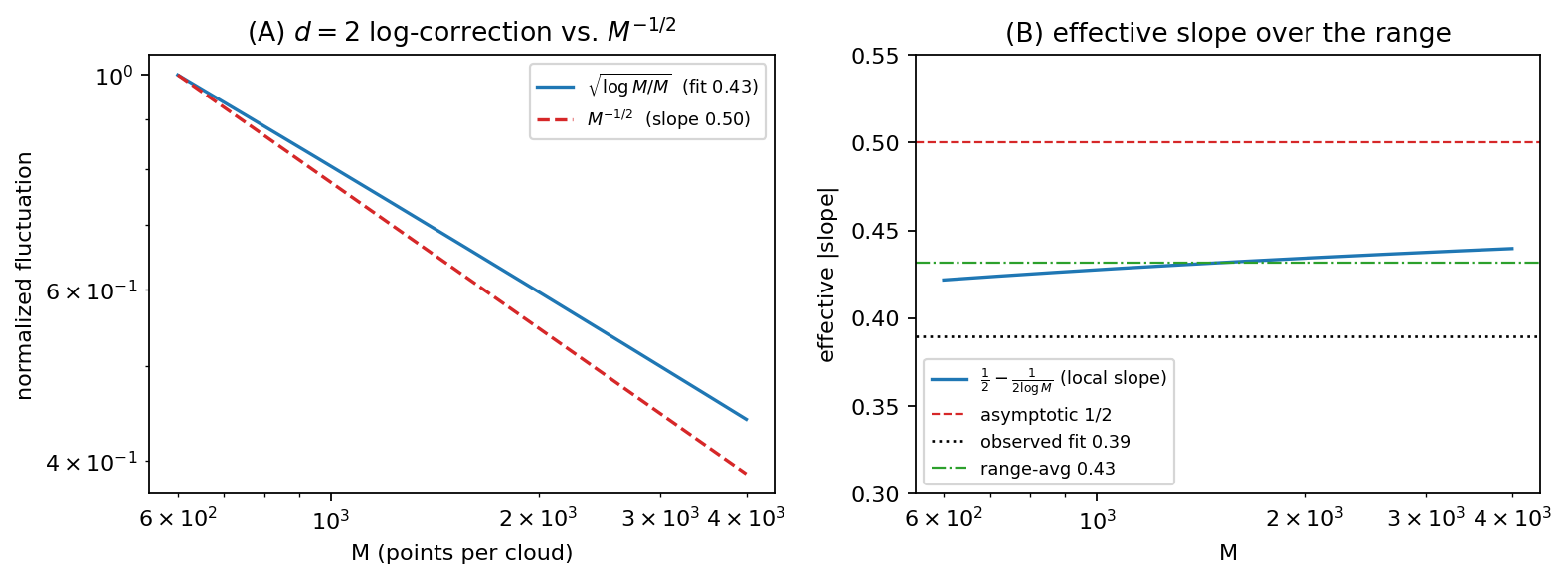}
\caption{The $d=2$ logarithmic correction. \textbf{(A)} $\sqrt{\log M/M}$ (effective single-slope
fit $0.43$) against the asymptotic $M^{-1/2}$ (slope $0.50$) over the range of
Figure~\ref{fig:unified}(1). \textbf{(B)} the local slope $\tfrac12-\tfrac{1}{2\log M}$ (running
$0.42$--$0.44$), the asymptotic $0.50$, the range-averaged $0.43$, and the observed fitted exponent
$0.39$.}
\label{fig:d2}
\end{figure}

\end{document}